\documentclass[reqno, 11pt]{amsart} 

\usepackage{amsmath, amssymb} 
\usepackage{amscd}            
\usepackage{amsxtra}          
\usepackage{upref}            
\usepackage{mathdots}
\usepackage{comment}

\usepackage[mathscr]{eucal}

\theoremstyle{plain}

\newtheorem{theorem}{Theorem}
\newtheorem{proposition}[theorem]{Proposition}

\newtheorem{corollary}[theorem]{Corollary}

\theoremstyle{remark}
\newtheorem{remark}[theorem]{Remark}

\numberwithin{equation}{section}
\numberwithin{theorem}{section}
\newcommand{\be}%
  {\protect\setcounter{equation}{\value{subsubsection}}}  
\newcommand{\ee}%
  {\protect\setcounter{subsubsection}{\value{equation}}}

\DeclareMathAlphabet\BOONDOX{U}{rsfso}{m}{n}

\newcommand{\N}{\mathbb{N}}
\newcommand{\Z}{\mathbb{Z}}
\newcommand{\Q}{\mathbb{Q}}
\newcommand{\R}{\mathbb{R}}
\newcommand{\C}{\mathbb{C}} 
\newcommand{\uh}{\mathcal{H}}

\newcommand{\of}{{\mathcal O}_F}

\newcommand{\ov}{{\mathcal O}_v}

\newcommand{\ak}{\mathbb{A}_{k}}

\newcommand{\sfnf}{{\mathcal S}(F^n)}

\newcommand{\hcsfmu}{{\mathcal C}_{\mu}(N_r\backslash G_r,\psi)}

\newcommand{\matn}{{\mathfrak{gl}}_n}
\newcommand{\lieb}{\mathfrak{b}}
\newcommand{\liebn}{{\mathfrak{b}}_n}

\newcommand{\gabygk}{{\rm G}_r(k)\backslash{\rm G}_r(\ak)}

\newcommand{\glr}{{\rm GL}_r}

\newcommand{\gln}{{\rm GL}_n}

\newcommand{\cs}{\mathcal S}

\newcommand{\wpif}{{\mathcal W}(\pi,\psi)}
\newcommand{\wpiflambda}{{\mathcal W}(\pi_{\lambda},\psi)}
\newcommand{\wpiflambdao}{{\mathcal W}(\pi_{\lambda_0},\psi)}

\newcommand{\wPi}{{\mathcal W}(\Pi,\psi)}

\newcommand{\wpidualf}{{\mathcal W}(\tilde{\pi},\overline{\psi})}

\newcommand{\res}{\text{Re}(s)}
\newcommand{\re}[1]{\text{Re}(#1)}

\newcommand{\lepif}{L(s,\pi,\wedge^2)}
\newcommand{\lgepif}{L_G(s,\pi,\wedge^2)}
\newcommand{\lgepiv}{L_G(s,\pi_v,\wedge^2)}
\newcommand{\epgepif}{\varepsilon_G(s,\pi,\wedge^2,\psi)}

\newcommand{\lePiv}{L(s,\Pi,\wedge^2)}

\newcommand{\ljsepi}{L_{JS}(s,\pi,\wedge^2)}
\newcommand{\ljsepif}{L_{JS}(s,\pi,\wedge^2)}
\newcommand{\ljsepiv}{L_{JS}(s,\pi_v,\wedge^2)}

\newcommand{\ljsePi}{L_{JS}(s,\Pi,\wedge^2)}

\newcommand{\epjsepif}{\varepsilon_{JS}(s,\pi,\wedge^2,\psi)}
\newcommand{\epjsepiv}{\varepsilon_{JS}(s,\pi_v,\wedge^2,\psi_v)}

\newcommand{\jswphif}{J(s,W,\phi)}
\newcommand{\jswvphiv}{J(s,W_v,\phi_v)}

\newcommand{\jswf}{J(s,W)}

\newcommand{\jswv}{J(s,W_{1,v})}

\newcommand{\jsw}{J(s,W)}

\newcommand{\lshepiv}{L_{Sh}(s,\pi_v,\wedge^2)}

\newcommand{\lshepif}{L_{Sh}(s,\pi,\wedge^2)}

\newcommand{\lshePi}{L_{Sh}(s,\Pi,\wedge^2)}
\newcommand{\lshePiv}{L_{Sh}(s,\Pi_v,\wedge^2)}

\newcommand{\epshepiv}{\varepsilon_{Sh}(s,\pi_v,\wedge^2,\psi_v)}

\newcommand{\epshepif}{\varepsilon_{Sh}(s,\pi,\wedge^2,\psi)}

\newcommand{\epshePi}{\varepsilon_{Sh}(s,\Pi,\wedge^2)}
\newcommand{\epshePiv}{\varepsilon_{Sh}(s,\Pi_v,\wedge^2,\psi_v)}

\newcommand{\epgepiv}{\varepsilon_G(s,\pi_v,\wedge^2,\psi_v)}

\DeclareMathOperator{\tr}{Tr}
\DeclareMathOperator{\Ind}{\mathit{i}}
\DeclareMathOperator{\Irr}{Irr}
\DeclareMathOperator{\Hom}{Hom}

\newcommand{\irredrepr}{\Irr(G_r)}
\newcommand{\grepfr}{\Irr_{\rm{gen}}(G_r)}
\newcommand{\grepfeven}{\Irr_{\rm{gen}}(G_{2n})}
\newcommand{\grepfodd}{\Irr_{\rm{gen}}(G_{2n-1})}

\newcommand{\temprepfr}{\Irr_{\rm{temp}}(G_r)}

\newcommand{\dsrepfr}{\Irr_{\rm{ds}}(G_r)}
\newcommand{\dsrepfh}{\Irr_{\rm{ds}}}
\newcommand{\dsrepfri}{\Irr_{\rm{ds}}(G_{r_i})}

\newcommand{\ntemprepfr}{\Irr_{\rm{ntemp}}(G_r)}

\begin{document}


\title {On the exterior square  $\varepsilon$-factors of $GL_n$} 
\author{Ravi~Raghunathan} 
\address{Department of Mathematics \\ 
         Indian Institute of Technology Bombay\\
         Powai, Mumbai 400076\\ India}
\email{raviraghuanthan@iitb.ac.in}

\subjclass[2020]{11F70} 

\keywords{exterior square $L$-functions}
\begin{abstract} Let $\pi$ be a generic representation of $\glr(F)$, where $F$ is a $p$-adic field. We show using 
global methods that the $\varepsilon$-factors associated to the exterior square of $\pi$ via the 
Jacquet-Shalika integrals and Langlands-Shahidi methods coincide.  Along the way
we also give a new proof of the local functional equation in the $p$-adic case following the techniques in 
\cite{BP2021}.
\end{abstract}

\maketitle


\markboth{Ravi Raghunathan}{On the archimedean exterior square $L$-function}


\section{Introduction}\label{intro}

We state our main result first, referring the reader to Section \ref{secprelim} for the precise definitions 
and the notation used below.

Let $F$ be any local field of characteristic $0$, and let $G_r=\glr(F)$. Fix a non-trivial additive character $\psi: F\to \C$. 
Let  $\grepfr$ denote the isomorphism classes of generic representations of $G_r$. There are three 
ways to attach exterior square $L$-factors and $\varepsilon$- factors to a representation $\pi$ in $\grepfr$.
The {\it Galois} (or sometimes {\it arithmetic}) factors defined via
the local Langlands correspondence are denoted by $\lgepif$ and $\epgepif$ respectively, those defined
via the Langlands-Shahdi method by $\lshepif$ and $\epshepif$, and those defined
using the integrals of Jacquet and Shalika and a local functional equation by $\ljsepif$ and $\epjsepif$.

The main aim of this paper is to establish the following theorem.
\begin{theorem}\label{epjsequalepshthm} Let $F$ be a $p$-adic field, and let $\pi\in\grepfr$. 
Then
\begin{equation}\label{epjsequalepsheqn}
\epjsepif=\epshepif.
\end{equation}
\end{theorem}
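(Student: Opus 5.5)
The plan is a global argument: embed $\pi$ as a local component of a cuspidal automorphic representation, compare the two global functional equations, and peel off the places where the two theories are already known to agree. The local input is a reduction to supercuspidal $\pi$, using multiplicativity of both $\varepsilon$-factors.

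\textbf{Step 1: reduction to supercuspidals.} A generic $\pi\in\grepfr$ is a subquotient of a representation induced from supercuspidals $\rho_1\otimes\cdots\otimes\rho_t$. The Langlands--Shahidi factor $\epshepif$ is multiplicative (Shahidi): it equals
\[
\prod_i \varepsilon_{Sh}(s,\rho_i,\wedge^2,\psi)\,\prod_{i<j}\varepsilon(s,\rho_i\times\rho_j,\psi).
\]
I would prove the same product formula for $\epjsepif$. The method is to follow the functional-equation proof sketched in the abstract, in the style of \cite{BP2021}: realize the Jacquet--Shalika functional as an analytically continued family over the unramified twists $\pi_\lambda$, and establish the local functional equation with a gamma factor $\gamma_{JS}$ that is rational in $q^{-\lambda}$. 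Combined with uniqueness of the relevant invariant functional, this should give multiplicativity of $\gamma_{JS}$ for generic $\lambda$, and then for all $\lambda$ by rationality. Multiplicativity of $\gamma$ together with the $L$-factor computations then yields multiplicativity of $\varepsilon_{JS}$. I expect this step to be the main obstacle. The Jacquet--Shalika integral involves an unusual unipotent integration and a Schwartz function $\phi$, and the choice of sections in the induced model needs care. If it proves too hard, the fallback is to prove multiplicativity only for $\gamma$-factors and deduce the $\varepsilon$-statement from known equality of $L$-factors (Cogdell--Matringe, Jo).

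\textbf{Step 2: supercuspidal case via globalization.} For supercuspidal $\pi$, choose a number field $k$ with a place $v_0$ where $k_{v_0}=F$, and a global additive character $\Psi$ with $\Psi_{v_0}=\psi$. By the standard Poincar\'e-series globalization (Henniart, Shahidi), there is a cuspidal $\Pi$ of $\glr(\ak)$ with $\Pi_{v_0}\simeq\pi$ and $\Pi_v$ unramified at every other finite place. The global Jacquet--Shalika functional equation for $\ljsePi$, the global Langlands--Shahidi functional equation for $\lshePi$, and the agreement of both local theories at unramified places give an equality
\[
\prod_{v\in S}\gamma_{JS}(s,\Pi_v,\wedge^2,\Psi_v)=\prod_{v\in S}\gamma_{Sh}(s,\Pi_v,\wedge^2,\Psi_v),
\]
where $S$ consists of $v_0$ and the archimedean places. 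At archimedean places the two sides agree: both equal the Galois factors, by Shahidi and by the archimedean results for Jacquet--Shalika. One can also arrange the archimedean components to be principal series and apply multiplicativity. Cancelling gives $\gamma_{JS}(s,\pi,\wedge^2,\psi)=\gamma_{Sh}(s,\pi,\wedge^2,\psi)$.

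\textbf{Step 3: from $\gamma$ to $\varepsilon$.} Each theory satisfies
\[
\gamma(s)=\varepsilon(s)\,\frac{L(1-s,\tilde\pi,\wedge^2)}{L(s,\pi,\wedge^2)}.
\]
The $L$-factors $\ljsepif=\lshepif$ coincide by known results. Therefore the $\varepsilon$-factors agree for supercuspidal $\pi$. Step 1 then propagates this to all generic $\pi$, since both $\varepsilon$-factors are multiplicative in the same way and the Rankin--Selberg factors $\varepsilon(s,\rho_i\times\rho_j,\psi)$ appear identically in both products.
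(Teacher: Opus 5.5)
Your Step 2 (Henniart's globalisation, comparing the two global functional equations, the archimedean equality of \cite{JLST2025}, and the equality of $L$-factors) is how the paper handles supercuspidal $\pi$. The gap is Step 1, and the whole passage to general generic $\pi$ rests on it. You assert that $\gamma_{JS}$ is multiplicative under parabolic induction, but you do not prove it, and the mechanism you sketch does not yield it. Uniqueness of the invariant functional only shows that, for each $\lambda$, the two sides of the local functional equation for $\pi_\lambda$ differ by a single scalar $\gamma_{JS}(s,\pi_\lambda,\wedge^2,\psi)$. It gives no relation between that scalar and the gamma factors of the inducing data. To get such a relation you would have to unfold the Jacquet--Shalika integral of Whittaker functions attached to sections of the induced representation. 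The target is Jacquet--Shalika integrals of the blocks together with Rankin--Selberg integrals between blocks, with convergence and continuation controlled throughout (the analogue of Soudry's work for Rankin--Selberg integrals). That is a substantial theorem in its own right. The paper remarks that the analogue of \eqref{epfnofparind} for $\varepsilon_{JS}$ was not known, and it obtains multiplicativity only as a consequence. Your proposal leaves this step entirely open.

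Two further points. First, your product formula is false as stated, because $\varepsilon$-factors are not multiplicative over the cuspidal support. Take $\pi=\mathrm{St}_3$ and unramified $\psi$. The exterior square of the parameter of $\mathrm{St}_3$ is again that parameter, so $\varepsilon_{Sh}(s,\mathrm{St}_3,\wedge^2,\psi)$ is a non-constant monomial in $q^{-s}$. Your product over the support $\nu,1,\nu^{-1}$, however, equals $1$. Only $\gamma$-factors are multiplicative in that sense; $\varepsilon$-factors factor only over quasi-square-integrable data, as in Proposition \ref{inductextsq}. So what you call the fallback (propagate $\gamma_{JS}=\gamma_{Sh}$, then divide by the equal $L$-factors of Theorem \ref{lepiequalthm}) is the only correct form of Step 3, and it still needs Step 1.

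Second, the paper avoids Step 1 altogether. Shin's globalisation (Theorem \ref{shinglobthmtwo}) puts, in every open subset of $\temprepfr$, a local component of a cuspidal $\Pi$ whose other finite ramified components are all supercuspidal. The supercuspidal case plus the global comparison therefore gives the equality on a dense subset of the tempered dual. Three analytic inputs then propagate it to all nearly tempered $\pi_\lambda$:
\begin{itemize}
\item analytic Whittaker sections in Harish-Chandra Schwartz spaces (Proposition \ref{whittanalytic});
\item continuity of the integrals (Proposition \ref{jsintlambdahol});
\item holomorphy of $\gamma_{Sh}(s,\pi_\lambda,\wedge^2,\psi)$ in $\lambda$.
\end{itemize}
Proposition 2.8.1 of \cite{BP2021} then extends the equality to all $\lambda$. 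Density in the tempered dual plus analytic continuation in the inducing parameter is the idea your argument is missing.
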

In the archimedean case, the theorem above was recently proved by Jiang, Liu, Sun and Tian in Theorem 2.1 of 
\cite{JLST2025}. We will need to invoke this result to prove our theorem in the $p$-adic case.

The definition of $\epjsepif$ above is dependent on the existence of a local functional equation 
for the Jacquet-Shalika integrals.  When $F$ is $p$-adic, the local functional equation was established by Matringe 
\cite{Matringe2014} and Cogdell-Matringe \cite{CoMa2015} (see \cite{KeRa2012} for discrete series, and more generally, 
globalisable representations). A second aim of this paper is to reprove the local functional equation 
in the $p$-adic case imitating the global arguments used in \cite{BP2021} to treat the Asai $L$-function. 
We refer the reader to Theorems \ref{xientirethm} and \ref{padiclocfneqnthm}  where the precise form of the local 
functional equation appears.

In \cite{CoShTs2017} the authors established that $\epshepif=\epgepif$, when $F$ is $p$-adic field.
The corresponding theorem for archimedean $F$ is a much older result of Shahidi \cite{Shahidi85}. 
These results, together with those of \cite{JLST2025} and Theorem \ref{epjsequalepshthm} show 
that the three $\varepsilon$-factors must coincide.
\begin{theorem}\label{epjsequalepgthm} Let $F$ be a local field of characteristic $0$, and let $\pi\in\grepfr$. 
Then
\begin{equation}\label{epjsequalepg}
\epjsepif=\epshepif=\epgepif.
\end{equation}
\end{theorem}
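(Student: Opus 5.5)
Theorem \ref{epjsequalepgthm} is a formal assembly of results already cited, so the plan is to split by the nature of $F$ and chain two equalities in each case. For the second equality, $\epshepif=\epgepif$, we invoke \cite{CoShTs2017} when $F$ is $p$-adic and \cite{Shahidi85} when $F$ is archimedean; in the archimedean case the Langlands correspondence is Langlands' classification, and Shahidi's computation of the local coefficient identifies the Langlands--Shahidi factors with the Artin factors of $\wedge^2\circ\phi_\pi$. For the first equality, $\epjsepif=\epshepif$, we use Theorem \ref{epjsequalepshthm} when $F$ is $p$-adic and Theorem 2.1 of \cite{JLST2025} when $F$ is $\R$ or $\C$. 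Combining the two equalities in each case gives \eqref{epjsequalepg} for every $\pi\in\grepfr$.

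The only point needing care is normalization. All three $\varepsilon$-factors must be attached to the same additive character $\psi$. They must also use the same conventions for the embedding of the Jacquet--Shalika integrals, the $\gamma$-factor, and the dual representation $\tilde\pi$ in the local functional equation. In particular, we check that the $\varepsilon_{JS}$ defined through Theorems \ref{xientirethm} and \ref{padiclocfneqnthm} agrees with that of \cite{Matringe2014,CoMa2015} in the $p$-adic case, and with that of \cite{JLST2025} in the archimedean case. If the conventions differ, for instance by a central character value $\omega_\pi(-1)$ or by a substitution $\psi\mapsto\psi^{-1}$, we insert the corresponding elementary correction. Such a factor appears identically on the Galois side, because $\varepsilon(s,\rho,\psi_a)=\det\rho(a)|a|^{\dim\rho(s-1/2)}\varepsilon(s,\rho,\psi)$, so the chain of equalities is preserved.

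Granting Theorem \ref{epjsequalepshthm}, there is no real obstacle here: the main difficulty of the paper is that theorem, the $p$-adic equality of the Jacquet--Shalika and Langlands--Shahidi factors. Its proof will run through a global argument:
\begin{itemize}
\item globalize $\pi$ to a cuspidal representation $\Pi$ with prescribed behaviour at one place;
\item compare the global functional equations of the Jacquet--Shalika and Langlands--Shahidi $L$-functions;
\item cancel the unramified places, and the archimedean places using \cite{JLST2025}.
\end{itemize}
This isolates the desired $p$-adic equality, first for supercuspidal or discrete-series $\pi$, and then for all generic $\pi$ via multiplicativity.
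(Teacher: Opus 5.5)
Your proposal is correct and matches the paper's argument: you obtain $\varepsilon_{Sh}=\varepsilon_G$ from \cite{CoShTs2017} in the $p$-adic case and \cite{Shahidi85} in the archimedean case, and chain this with Theorem \ref{epjsequalepshthm} ($p$-adic) and Theorem 2.1 of \cite{JLST2025} (archimedean). One caveat concerns your closing sketch of Theorem \ref{epjsequalepshthm}, though it does not affect this statement since you take that theorem as given: the paper cannot pass from supercuspidal to general generic $\pi$ via multiplicativity, because the analogue of \eqref{epfnofparind} for $\varepsilon_{JS}$ was not known beforehand, so it instead uses Shin's globalization (Theorem \ref{shinglobthmtwo}) to get the equality on a dense subset of tempered representations and then extends it by continuity and analytic continuation in $\lambda$.
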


The requirement that the $L$-functions and $\varepsilon$-factors defined in different ways coincide is 
intrinsic to the development of the theory of automorphic $L$-functions.
Indeed, the formulation of the Local Langlands Conjectures (LLC) for $G_r$ incorporates these equalities for the 
Rankin-Selberg $L$-functions as part of the statements. 
We wish to emphasise that proving the equalities for the exterior square factors is more subtle than
proving the analogues for the Rankin-Selberg or Asai factors, mainly because the archimedean theory of the 
integral representations of Jacquet and Shalika is more complicated and was previously less 
well developed in the former case (the Asai case can often be reduced to the Rankin-Selberg case
by making suitable assumptions about splitting at the archimedean places).
The second tool that was missing (and is now a consequence of Theorem \ref{epjsequalepshthm}) is 
the ability to express the factors $\epjsepif$ of generic 
representations in terms of the $\varepsilon$-factors of the inducing data.
Such formul{\ae} were available not only in the Rankin-Selberg and Asai situations, but also for
the factors $\epshepif$. Thus, the theory of the factors $\epjsepif$ really is somewhat more complicated.

\subsection{Organisation of the paper and an outline of the main ideas} 
A word about our methods of proof as well the logical flow of arguments in this paper. Section
\ref{secprelim} is largely concerned with notation and with recalling the basic notions of the representation
theory of reductive groups over a local field including the notions of the Harish-Chandra Schwartz space and
the description of the Fell topology on the space of tempered representations. Tempered representations within 
a connected component in the latter space vary with a parameter $\lambda\in \C^k$, and Proposition \ref{whittanalytic}
(due to Beuzart-Plessis in \cite{BP2021}) establishes the existence of good analytic sections of the Harish-Chandra
Schwartz spaces in this parameter. Section \ref{seclshmethod} collects a number of known results for the Galois and 
Langlands-Shahidi exterior square factors. The deduction of the odd case of Theorem \ref{epjsequalepsheqn} from the 
even case, which is almost immediate, is explained here.
The precise definitions of the local and global Jacquet-Shalika integrals and some of their known properties are 
recalled in Section \ref{secjsint}.

In Section \ref{secanalyticity}, the analyticity of various $L$-factors, 
$\varepsilon$-factors, and the continuity of the Jacquet-Shalika integrals with respect to the Whittaker functions and
Schwartz functions use to define them are recorded. These results are taken from Section 3 of \cite{JLST2025} and 
are modelled on the treatment of the Asai integrals in Section 3.3 of \cite{BP2021}. 

Theorem \ref{henniart84} is an old globalisation result of Henniart in \cite{Henniart1984} (see also \cite{PrSc2008} for a more 
recent general relative version from which Theorem \ref{henniart84} can be deduced) which allows us to conclude 
that $\epjsepif=\epshepif$ for supercuspidal representations. This is carried out in Section \ref{secscrep}. 

The main new ingredient in our paper is the use of a consequence of Theorem 4.8 of \cite{Shin2012} which appears 
as Theorem \ref{shinglobthmtwo} in Section \ref{globmethpadic}. It can be used to assert that every neighbourhood 
(in the Fell topology) of a given tempered representation $\pi$ of $G_r$ 
contains a representation that can be globalised to a cuspidal automorphic representation of a number field $k$ which is either 
supercuspidal or unramified at all finite places, other than at the given place $v$ such that $k_v\cong F$, if the given 
place is finite. 
Since, the local $\varepsilon$-factors at the unramified can be taken (after a suitable choice of global additive 
character) to be identically $1$ and the equality of the $\varepsilon$-factors holds at the supercuspidal places and 
archimedean places, a comparison of the global functional equations of the global $L$-functions $\ljsePi$ and $\lshePi$ 
will allow us to conclude in Proposition \ref{denseeppadic} that Theorem \ref{padiclocfneqnthm} and Theorem 
\ref{epjsequalepshthm} are valid for a dense subset of the tempered representations of $G_{r}$.

The passage to all generic $p$-adic representations is done in two stages. We first extend the local functional 
equation to the class of nearly tempered representations using Corollary 2.7.1 of \cite{BP2021} (see Proposition 
\ref{whittanalytic}) which establishes the existence of good (analytic) sections of Whittaker functions, and which, in turn, 
allows us to use continuity and analyticity arguments on the connected components of the Bernstein variety. Also 
crucial is the technical Proposition 2.8.1 of \cite{BP2021} which allows one to extend analytic functions on products of 
right half-planes (in $\C$) and open sets in hyperplanes $P\subset\C^r$ to all of $\C\times P$. This allows us to to 
prove Theorem \ref{xientirethm} and to extend our proof of the local functional equation to all generic representations, 
proving Theorem \ref{padiclocfneqnthm}, and also Theorem \ref{epjsequalepshthm} in general.

\section*{Acknowledgements}  I would also like to thank U. K. Anandavardhanan for helpful discussions and for pointing 
out an error in an earlier version of this paper. It is also a pleasure to thank Dipendra Prasad for many helpful discussions.
I am grateful to Yeongseong Jo for bringing the paper \cite{JLST2025} to my attention.

\section{Preliminaries}\label{secprelim} 

\subsection{Notation}\label{subsecnot} We will lean heavily on \cite{KeRa2012} for our notation below.
We will denote by $F$ a fixed local field of characteristic $0$ and by $k$ a number field.
When $F$ is a $p$-adic field, its ring of integers of $F$ will be denoted $\of$ and the cardinality of its residue field by $q$ (or $q_F$ if it is necessary to specify the field).
Let $k_v$ be the completion of $k$ at a place $v$. 
When $v$ is finite, we let $\ov$ denote the ring of integers of $k_v$ and let $q_v$ be the cardinality 
of the residue field.
We let ${\rm G}_r$ stand for the group $\glr$. The $k_v$ points of ${\rm G}_r$ 
will be denoted $G_{r,v}$, and the $\ak$ points of ${\rm G}_r$ will be denoted 
by ${\rm G}_r(\ak)$, where $\ak$ is the ad\`ele ring of $k$. We will often
need to consider quotients of the form $\gabygk$, which
we denote by $[G]_k$. For the local field $F$,
we will denote $\glr(F)$ simply by $G_r$, as before. 
We follow these two conventions, whenever convenient, for any algebraic groups that may occur in this paper.

We let $B_r$ be the (standard Borel) subgroup of upper triangular matrices in $G_r$, 
$N_r$ the standard maximal unipotent subgroup of $G_r$, and $Z_r$ the center of $G_r$. Let 
$\matn$ be the space of all $n\times n$ matrices and $\liebn$ be the subspace of 
all upper triangular $n\times n$ matrices. We let $P_r$ be the mirabolic subgroup
of $G_r$, that is, it is the stabiliser in $G_r$ of the vector 
$(0,\ldots ,0,1)\in F^r$ under multiplication on the right. We see that 
\[
P_r=\left\{g\in {\rm G}_r(E)\Big\vert\,g=\begin{pmatrix} *&\cdots &*&*\\
							          \vdots& &\vdots&\vdots\\
							          *&\cdots &*&*\\
							          0&\cdots &0&1\end{pmatrix}\right\}.
							          \]
We also set
\[
w_n = \left( \begin{array}{ccc} &&1 \\ & \iddots & \\ 1&& \end{array} \right)
\quad\text{and}\quad w_{n,n} = \left( \begin{array}{cc} 0 &I_{n}\\ I_{n}& 0\end{array} \right).
\]
When $r=2n-1$, we define
\[
\tau=\begin{pmatrix} 0&w_n&0\\
                     w_n&0&0\\
                     0&0&1\end{pmatrix},
\]
viewed as an element in $G_r$, $G_{r,v}$ or ${\rm G}_r(\ak)$ as appropriate.

Let $F$ be a $p$-adic field. By a representation of $\pi$ of $G_r$ we will mean a smooth admissible complex representation of $G_r$. If $F$ is archimedean, 
we will mean a smooth admissible Fr\'echet representation of moderate growth in the sense of Casselman-Wallach, of finite length and with complex coefficients.
We will denote by $\tilde{\pi}$ denote the contragredient representation of $\pi$. 
We let $\irredrepr$, $\temprepfr$ and $\dsrepfr$ be the sets of isomorphism classes of all irreducible representations, irreducible tempered representations and irreducible square-integrable (or discrete series) representations of $G_r$ respectively. We let $\nu(g)=|\det g|$ for $g\in G_r$. If $\sigma\in \dsrepfr$, a representation of $G_r$ of the form $\sigma\nu^{s}$, $s\in \C$ is called a quasi-square integrable representation.

Let $\sfnf$ be the space of Schwartz functions on $F^n$. The Fourier transform of $\phi$ with respect to a suitably chosen additive character $\psi:F\to C$ is denoted $\widehat{\phi}$. The Haar measure on $F^n$ is chosen so that $\widehat{\widehat{\phi}}(x)=\phi(-x)$.
Likewise, let $ \cs(\ak^{n})$ be the space of 
Schwartz-Bruhat functions on $\ak^n$ with a Fourier transform defined with respect to a suitably chosen global additive character $\psi=\otimes_v^{\prime}\psi_v:\ak\to \C^{*}$.
See \cite{KeRa2012} for the relevant choices of the additive characters and Haar measures.

For $\nu\in \R\cup\{-\infty\}$ we define the 
(right) half-plane $\uh_{\nu}=\{z\in \uh\,\vert\, \re{z}>\nu\}$. If $[\nu_1,\nu_2]\subset \R$ is an interval, we define the vertical strip $\uh_{[\nu_1,\nu_2]}$ to be the closure of the set $\uh_{\nu_1}\cap \uh_{\nu_2}^c$. Let $M$ be a
complex manifold. Following Section 2.8 of \cite{BP2021}, we say that a holomorphic function $f:\uh_{\nu}\times M\to \C$ is of finite order on vertical 
strips in the first variable locally uniformly in the second variable if there exists $d\in \R$, such that for every vertical strip $V\subset \uh_{\nu}$ and every 
compact set $C\subset M$, 
\[
\sup_{(s,t)\in \uh_{\nu}\times C}e^{-|s|^d}\vert f(s,t)\vert <\infty.
\]

\subsection{Induced representations and the topology on $\temprepfr$}

Let $r_i\in \N$, $1\le i\le k$ be such that $r_1+r_2+\cdots +r_k=r$ and let
$\sigma_i\in \dsrepfri$, so $\sigma=\sigma_1\times\sigma_2\times \cdots \sigma_k$
is in $\dsrepfh(M)$, where $M=G_{r_1}\times G_{r_2}\times \cdots \times G_{r_k}$.
We let $P$ be a standard parabolic in $G_r$ with $M$ as its Levi subgroup.
For any $\lambda=(\lambda_1,\cdots \lambda_k)$, we define
\begin{equation}\label{pilambdadef}
\pi_{\lambda}:=\Ind_P^{G_r}(\sigma_{\lambda}):=\Ind_{P}^{G_{r}}(\sigma_{1}\nu^{\lambda_1}\otimes\sigma_{2}\nu^{\lambda_2}\otimes \cdots \sigma_{k}\nu^{\lambda_k}),
\end{equation}
where $\Ind_P^{G_r}$ denotes the smooth normalised parabolic induction from $P$ to $G_r$. Alternatively,
we may write
\[
\pi_{\lambda}=\Ind_P^{G_r}(\Delta_1\times\cdots \Delta_k),
\]
where the $\Delta_i=\sigma_i\nu^{\lambda_i}$ are quasi-square-integrable representations, $1\le i\le k$.
Recall that if $\re{\lambda_i}=0$ for all $1\le i\le k$, $\pi_{\lambda}$ is irreducible and tempered, and 
every irreducible tempered 
representation necessarily has this form.
If $|\re{\lambda_i}|<1/4$ for all $1\le i\le k$, we will say (following \cite{BP2021}) that 
$\pi_{\lambda}$ is nearly tempered and denote the set of isomorphism
classes  of irreducible nearly tempered representations by $\ntemprepfr$. 

We follow the description of the topology on $\temprepfr$ on page 23 of \cite{BP2021}. Fix a standard parabolic subgroup $P$ with Levi decomposition $P=MU$ of $G_r$ as above. For $\lambda\in i\R^k$, the map $\lambda\mapsto \pi_{\lambda}$ identifies a quotient of $i\R^k$ with a subset of $\temprepfr$ (which is equipped with the quotient topology). The topology on $\temprepfr$ is the one for which these subsets are precisely its connected components as $P$ varies over all standard parabolic subgroups.

We now follow the notation in Section 2.1 of \cite{BP2021}.
For a connected reductive group ${\rm G}$ over $F$, let $A_{G}$ be the maximal split torus in the centre 
of $G$ and let $X^{*}(G)$ and $X^{*}(A_G)$ be the groups of algebraic characters of $G$ and $A_G$ respectively. We set
\begin{align}
{\mathcal A}_G^{*}&=X^{*}(A_G)\otimes\R, \quad{\mathcal A}_{G,\C}^{*}=X^{*}(A_G)\otimes\C,\\
{\mathcal A}_G&=\Hom(X^{*}(G),\R),\quad\text{and}\quad {\mathcal A}_{G,\C}=\Hom(X^{*}(G),\C).
\end{align}
If $P=MU$, as above we may restrict a character
$\chi\in X^{*}(A_{M})$ to $A_{G}$. This yields surjections from 
${\mathcal A}_M^{*}$ to ${\mathcal A}_{G}^{*}$ and from ${\mathcal A}_{M,\C}^{*}$
to ${\mathcal A}_{G_r,\C}^{*}$. The kernels of these two surjections will be
denoted ${({\mathcal A}_M^{G})}^{*}$ and ${({\mathcal A}_{M,\C}^{G})}^{*}$ respectively.
We let $\langle .\,,.\rangle$ be the natural pairing between ${\mathcal A}_{G}^{*}$ and ${\mathcal A}_{G}$
(or between ${\mathcal A}_{G,\C}^{*}$ and ${\mathcal A}_{G,\C}$)
and define $H_G: G\to {\mathcal A}_{G}$ by $\langle \chi(g)\,,H_G(g)\rangle=\log\vert\chi(g)\vert_F$.
For $\lambda \in {\mathcal A}_{G_r,\C}^{*}$, define the unramified character $g^{\lambda}$ on $G$ by
$g^{\lambda}:=e^{\langle \lambda,H_G(g)\rangle}$.

We let $\Delta$ be the set of simple roots of $T_r$ in $N_r$.
We equip ${\mathcal A}_{T_r}^{*}$ with the following partial order: $\lambda\preceq\mu$ if
and only if $\mu-\lambda$ is a linear combination of simple roots with non-negative coefficients. 
Let $W^{G_r}$ be the Weyl group of $G_r$; more specifically, $W^{G_r}={\rm{Norm}}_{G_r}(T_r)/T_r$.
Let $\overline{({\mathcal A}_{T_r}^{*})^{+}}$ be the closure of the negative Weyl chamber in 
${\mathcal A}_{T_r}^{*}$, that is
\[
\overline{({\mathcal A}_{T_r}^{*})^{+}}=
\{\lambda\in {\mathcal A}_{T_r}^{*}\,\vert\, \text{$\langle \lambda,\alpha^{\vee}\rangle\le 0$ for all $\alpha^{\vee}\in \Delta$}\}.
\]
For $\lambda\in {\mathcal A}_{T_r}^{*}$, we denote by $|\lambda|$ the unique element in 
$W^{G_r}\lambda\cap \overline{({\mathcal A}_{T_r}^{*})^{+}}$.
For $\mu\in ({\mathcal A}_{T_r}^{G_r})^{*}$, it will be useful to define the open subset 
\[
{\mathcal U}[\prec \mu]:=\{\lambda\in ({\mathcal A}_{M,\C}^{G_r})^{*}\,\big\vert\,  |\re{\lambda}|\prec \mu\}.
\]
of $({\mathcal A}_{M,\C}^{G_r})^{*}$. 

We summarise very briefly the discussion on holomorphy on page 9 of \cite{BP2021}.
If $K$ is a standard maximal compact subgroup of $G$, there is a (topological) isomorphism between the spaces
$t_{\lambda}:\pi_{\lambda}\to \pi_K=\Ind_{K\cap P}^K\sigma\vert_M$ given by restricting $\pi_{\lambda}$ to $K$.
A map $\lambda (\in{\mathcal A}_{M,\C}^{*})\to e_{\lambda}\in\pi_{\lambda}$ is called an analytic or holomorphic section if
$t_{\lambda}\circ e_{\lambda}:{\mathcal A}_{M,\C}^{*}\to \Ind_{K\cap P}^K\sigma\vert_M$ is an analytic function.
The notion of holomorphy is independent of the choice of $K$.
 If $F$ is archimedean, we denote by $\pi_{\lambda}^{\prime}$ the topological dual of $\pi_{\lambda}$, while for 
$F$ $p$-adic, we take $\pi_{\lambda}^{\prime}$ to be the contragredient $\tilde{\pi_{\lambda}}$.
A map $\lambda\to T_{\lambda}\in \pi_{\lambda}^{\prime}$ is said to be analytic if for every analytic
section $e_{\lambda}$, the map $\lambda\to T_{\lambda}(e_{\lambda})$ is analytic.

We will only be dealing with ${\rm G}={\rm G}_r$ and the Levi subgroups of its (standard) parabolic subgroups.
For the standard Borel subgroup we have the Levi decomposition $B_r=T_rN_r$, where $T_r$ is the maximal torus in $G_r$. 
The corresponding real vector space ${\mathcal A}_{T_r}^{*}=X^{*}(T_r)\otimes \R$ can be identified with 
$\R^r$, and ${\mathcal A}_{T_r,\C}^{*}$ can be identified with $\C^r$.

When $P=MU$, as above, we see that ${\mathcal A}_M^{*}\cong \R^k$ and ${\mathcal A}_{M,\C}^{*}\cong
\C^k$. More specifically, we have the identification 
\[
{\mathcal A}_{M}^{*}=\{\lambda=(\underbrace{\lambda_1,\ldots \lambda_1}_{\text{$r_1$ times}},\ldots ,
\underbrace{\lambda_k,\ldots \lambda_k}_{\text{$r_k$ times}})\,\vert\, \lambda_i\in \R, 1\le i\le k\}\cong\R^k
\hookrightarrow {\mathcal A}_{T_r}^{*}\cong \R^r,
\]
and similarly the space ${\mathcal A}_{M,\C}^{*}$ can be 
identified with a $k$-dimensional subspace of ${\mathcal A}_{T_r,\C}^{*}\cong\C^r$. This inclusion arises because
$A_M\subset T_r$, so a character on $T_r$ restricted to $A_M$ yields a character of $A_M$. 
Note that ${({\mathcal A}_M^{G_r})}^{*}$ can be identified with the $(k-1)$-dimensional subspace of
of $\R^k$ given by $\lambda_1+\lambda_2+\cdots +\lambda_k=0$, while ${({\mathcal A}_{M,\C}^{G_r})}^{*}$ can
be identified with the corresponding $(k-1)$-dimensional subspace of $\C^k$.

\subsection{Generic representations}
We retain the notation of the previous two subsections. We will largely follow \cite{BP2021} for the exposition below.
Let $ \psi:F\to \C$ be a nontrivial additive character of $F$ and view it as a character of $N_{r}$ by setting
$\psi(u) := \displaystyle{\prod_{j=1}^{r-1}}\psi(u_{j,j+1})$ for $u\in N_{r}$. When $F\cong k_v$, we will denote
the character $\psi$ by $\psi_v$ and obtain a corresponding character of $N_{r,v}$. Such a character $\psi$ is called a generic character of $N_r$. 

Let $\pi\in \irredrepr$ and $\psi$ be a generic character of $N_r$.
A Whittaker functional on $\pi$ is a continous linear functional
$J:\pi\to \C$ such that $J(\pi(ug)v)=\psi(u)J(\pi(g)v)$ for all $v\in \pi$, 
$u\in N_r$ and $g\in G_r$. A representation $\pi\in \irredrepr$ is said to be generic if there exists a Whittaker 
functional on $\pi$. The isomorphism classes of generic representations of $G_r$ will be denoted $\grepfr$.
If $\pi \in \grepfr$, it is known by Theorem 6.2f of \cite{Vogan1978} when $F$ is archimedean, and by 
Theorem 9.7 of \cite{Zelevinsky1980} when $F$ is $p$-adic, that it is necessarily of the form $\pi_{\lambda}$ described above.

We define ${\mathcal{C}}(N_r\backslash G_r,\psi)$ to tbe the space of all smooth
functions $W:G_r\to \C$ such that $W(ng)=\psi(n)W(g)$ for all $n\in N_r$ and 
$g\in G_r$.
For $\mu\in \C^k$, we denote the Harish-Chandra Schwartz space of 
functions on $G_r$ by $\hcsfmu$. It is the subspace of ${\mathcal{C}}(N_r\backslash G_r,\psi)$, the 
space of functions $W:G_r\to \C$ satisfying a certain moderate 
growth condition depending on the Harish-Chandra $\Xi$-function $\Xi_\mu^{G_r}$. When $F$ is 
$p$-adic, the functions $W$ are assumed to be 
right-invariant by a compact open subgroup, while when $F$ is archimedean the functions are assumed smooth. 
When $F$ is $p$-adic, $\hcsfmu$ is a strict LF space, while when $F$ is archimedean, $\hcsfmu$ is a 
Fr\'echet space (see Section 2.5 of \cite{BP2021} for a proof of this fact as well as for the precise 
definition of the moderate growth condition and other details). 

A Whittaker functional $J$ induces
a continuous $G_r$-equivariant map
$\tilde{J}:\pi\to {\mathcal C}(N_r\backslash G_r,\psi)$. When $\pi=\pi_{\lambda}$, it is known (by Section 15.4 
of \cite{Wallach1992}, \cite{CaSh1980}) that the construction of
Jacquet's integral yields an analytic family of Whittaker functionals 
$\lambda\to J_{\lambda}\in \Hom_{N_r}(\pi_{\lambda},\psi)$ which is everywhere non-vanishing. The Whittaker 
functional $J_{\lambda}$ induces a continuous $G_r$-equivariant map
$\tilde{J_{\lambda}}:\pi_{\lambda}\to {\mathcal C}(N_r\backslash G_r,\psi)$.
The space of functions $g\mapsto J_{\lambda}(\pi_{\lambda}(g)v)$, $v\in \pi$, is called the Whittaker model of 
$\pi_{\lambda}$ and is denoted $\wpiflambda$. 

For each $W\in \wpif$, we define the function $\widetilde{W}$ on $G_r$ by 
$\widetilde{W}(g) = W(w_{r}\,^{\iota}g)$, where $~^{\iota}g =~^tg^{-1}$. 
The Whittaker model of the contragredient representation 
$\tilde{\pi}$, $\wpidualf$, consists precisely of 
the set of functions $\{\widetilde{W}\,|\,W_v\in \wpif\}$. We will denote by 
$\rho(g)\widetilde{W}$ the right translation of the function 
$\widetilde{W}$ by $g$. 

The following proposition (Corollary 2.7.1 of \cite{BP2021}) will be essential 
in what follows.
\begin{proposition} \label{whittanalytic} For every $\lambda_0\in ({\mathcal A}_{M,\C}^{G_r})^{*}$ and 
$W_0\in \wpiflambdao$, there exists a map
\[
\lambda( \in {\mathcal A}_{M,\C}^{G_r})^{*}\mapsto W_{\lambda}\in \wpiflambda
\]
such that
\begin{enumerate}
 \item for every $\mu\in ({\mathcal A}_{T_r}^{G_r})^{*}$ and 
 $\lambda\in {\mathcal U}[\prec \mu]:=\{\lambda\in ({\mathcal A}_{M,\C}^{G_r})^{*}\big\vert\, \vert\re{\lambda}\vert \prec \mu\}$
 we have $W_{\lambda}\in \hcsfmu$ and the map 
 \[
  \lambda\in {\mathcal U}[\prec\mu]\mapsto W_{\lambda}\in \hcsfmu
 \]
obtained by composing with this inclusion is analytic, and
\item $W_{\lambda_0}=W_0$.
\end{enumerate}
\end{proposition}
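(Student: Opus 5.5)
We take $W_0$ and extend it to a holomorphic family by means of a flat section and Jacquet's integral. By the description of generic representations recalled above (Theorem 9.7 of \cite{Zelevinsky1980}, Theorem 6.2f of \cite{Vogan1978}), we may take $\pi_{\lambda_0}=\Ind_P^{G_r}(\sigma_{\lambda_0})$. Its Whittaker model is the image of $\tilde{J}_{\lambda_0}$, where $J_{\lambda_0}$ is the (non-vanishing) Jacquet functional. So there is a vector $v_0\in\pi_{\lambda_0}$ with $W_0(g)=J_{\lambda_0}(\pi_{\lambda_0}(g)v_0)$.

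We define the flat section $e_\lambda:=t_\lambda^{-1}\bigl(t_{\lambda_0}(v_0)\bigr)\in\pi_\lambda$ using the compact picture $\pi_K=\Ind_{K\cap P}^K\sigma|_M$. This section is analytic by definition, and $e_{\lambda_0}=v_0$. We then set
\[
W_\lambda(g):=J_\lambda\bigl(\pi_\lambda(g)e_\lambda\bigr),\qquad \lambda\in({\mathcal A}_{M,\C}^{G_r})^{*}.
\]
Then $W_\lambda\in\wpiflambda$ and $W_{\lambda_0}=W_0$, which gives (2). Since $\lambda\mapsto J_\lambda$ is analytic in the sense recalled above (\cite{Wallach1992}, \cite{CaSh1980}), for each fixed $g$ the function $\lambda\mapsto W_\lambda(g)$ is entire. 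In the $p$-adic case, $e_\lambda$ is fixed by a compact open subgroup $K'\subset K$ independent of $\lambda$, because the $K$-action does not depend on $\lambda$. Hence all $W_\lambda$ lie in the space of right $K'$-invariant vectors, which is a single Fréchet step of the strict LF space $\hcsfmu$. In the archimedean case, the same observation applies to derivatives: the action of the universal enveloping algebra on the compact picture depends polynomially on $\lambda$.

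The heart of the matter is (1), and this is where I expect the main obstacle: a bound on $W_\lambda$ that is uniform in $\lambda$. Concretely, for every compact $C\subset{\mathcal U}[\prec\mu]$, every seminorm of $\hcsfmu$ (with derivatives $X\in U(\mathfrak g)$ in the archimedean case) should be bounded uniformly for $\lambda\in C$. I would try to prove this in the form
\[
|(R(X)W_\lambda)(tk)|\le C_X\,\delta_{B}(t)^{1/2}\,t^{-|\re{\lambda}|}\,(1+\|\log t\|)^{d}\,\mathbf 1_{\text{$t$ nearly dominant}},\qquad t\in T_r,\ k\in K,
\]
uniformly on $C$. The factor $\delta_B^{1/2}t^{-|\re\lambda|}$ combined with $|\re{\lambda}|\prec\mu$ is exactly what is needed to dominate by the weight defining $\hcsfmu$ via $\Xi^{G_r}_\mu$. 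The decay in the non-dominant directions comes from the $\psi$-equivariance, as usual (via Dixmier–Malliavin in the archimedean case). To get the exponent, I would first try the standard majorisation of the Jacquet integral: $|J_\lambda(\pi_\lambda(g)e_\lambda)|$ is dominated by the corresponding integral for the tempered datum twisted by $\re\lambda$, i.e.\ a spherical-type function of exponent $\re\lambda$. That function is controlled by $\Xi$ with the shift $|\re\lambda|$. This works directly only where the Jacquet integral converges. Elsewhere I would use the functional equations of the Jacquet integrals with respect to intertwining operators (Weyl elements), which move $\re\lambda$ into the convergent chamber, and this is why $|\re\lambda|$ (the dominant Weyl conjugate) appears. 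The difficulty is that the normalised intertwining operators may have poles. I would handle this by working on small neighbourhoods of each point of $C$ and using Hartogs/Cauchy estimates to pass from pointwise holomorphy to local uniform bounds.

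Once locally uniform boundedness in $\hcsfmu$ is established, analyticity of $\lambda\mapsto W_\lambda\in\hcsfmu$ follows. The reason is that a locally bounded map into a Fréchet space (or into a single step of a strict LF space) that is weakly analytic on a separating family of functionals, here the point evaluations $W\mapsto W(g)$, is analytic. This gives (1).
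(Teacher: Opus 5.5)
The paper does not prove this statement; it imports it verbatim as Corollary 2.7.1 of \cite{BP2021}. Your reduction is the natural one: pick $v_0$ with $W_0=J_{\lambda_0}(\pi_{\lambda_0}(\cdot)v_0)$, transport it to the flat section $e_\lambda=t_\lambda^{-1}(t_{\lambda_0}(v_0))$, and set $W_\lambda=J_\lambda(\pi_\lambda(\cdot)e_\lambda)$. It gives (2), entireness of $\lambda\mapsto W_\lambda(g)$ for each $g$, and, for $F$ $p$-adic, a single compact open subgroup fixing all $W_\lambda$. The final passage from locally uniform bounds plus holomorphy of point evaluations to analyticity is standard. However, all of the content of (1) lies in the locally uniform bound in $\hcsfmu$, and your sketch of that bound fails exactly at the parameters that matter.

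\emph{Failure at the walls.} The problem is at $\lambda$ with $\re{\lambda}$ singular, meaning $\langle\re{\lambda},\alpha^\vee\rangle=0$ for some root $\alpha$. This includes $\re{\lambda}=0$, the tempered points where the paper applies the proposition. There Jacquet's integral is in general not absolutely convergent: for unitary principal series of $G_2$ the majorant is $\int_{|x|>1}|x|^{-1-\re{\lambda_1-\lambda_2}}\,dx$. Since the Weyl group permutes the walls, no functional equation moves $\re{\lambda}$ into an open chamber. Moreover, the majorant constant blows up as $\re{\lambda}$ approaches the wall (like $\langle\re{\lambda},\alpha^\vee\rangle^{-1}$ in that example), so you have no uniform bounds nearby either.

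\emph{Why your remedy does not fix it.} Cauchy/Hartogs estimates on small neighbourhoods handle the poles of the intertwining operators, which are complex hypersurfaces that a torus can avoid. The walls are real hypersurfaces and cannot be avoided. Indeed $\lambda\mapsto\re{\langle\lambda,\alpha^\vee\rangle}$ is pluriharmonic, so its mean over any Cauchy torus centred at a wall point is $0$. Hence the torus meets the wall, where you have no estimate.

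\emph{What is needed.} You need an argument that crosses the walls. One option is the maximum principle for $z\mapsto W_{\lambda_0+z\nu}(g)$, with $\nu$ regular, on a strip $|\re{z}|\le\delta$ whose two boundary lines lie in open chambers. For $F$ $p$-adic this is justified by periodicity in $\operatorname{Im} z$: take $\nu$ integral and work on an annulus in $q^{-z}$. For $F$ archimedean you need an a priori finite-order bound in $\operatorname{Im} z$ together with Phragm\'en--Lindel\"of. You would then use the slack in the strict condition $|\re{\lambda}|\prec\mu$ to absorb the shift $\pm\delta\nu$ of the exponent and the $\delta$-dependent constant. Alternatively, you need some other source of bounds uniform in $\lambda$ near tempered parameters.

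\emph{A further gap.} For $P\neq B_r$, your majorisation requires estimates for the Whittaker functions of the discrete series $\sigma$ of $M$, or a realisation of $\sigma$ inside a principal series where Jacquet's integral converges. You do not supply either.
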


\subsection{Notation for the partial global integrals, $L$- and $\varepsilon$-factors}
Let $k$ be a number field and $S$ be a set of places of $k$. Of special interest will be the 
set $S=S_{\infty}$ of all the infinite places of $k$. If we are given functions $F_v(s)$ or $F(s,v)$, $s\in \C$ for each
place $v$ in $S$, we define
\[
F_S(s)=\prod_{v\in S}F_v(S)\quad\text{and}\quad F^S(s)=\prod_{v\notin S} F_v(s)
\]
when they converge.

For every generic representation $\pi_v$ of $\glr(k_v)$ we have three definitions 
of the twisted $L$- and $\varepsilon$-factors: the Galois factors $\lgepiv$ and $\epgepiv$, the
 Langlands-Shahidi factors
$\lshepiv$ and $\epshepiv$ defined via the Langlands-Shahidi method, and the factors $\ljsepiv$ and $\epjsepiv$ 
defined via the integral representations of Jacquet and Shalika. If $\Pi=\otimes_v^{\prime}\Pi_v$ is a globally generic 
admissible representation and $S$ is a finite set of places, we define the partial $L$-functions and $\varepsilon$-factors
\[
L_{*,S}(s,\Pi,\wedge^2)=\prod_{v\in S}L_{*}(s,\Pi_v,\wedge^2), L_{*}^S(s,\Pi,\wedge^2)=\prod_{v\notin S}L_{*}(s,\Pi_v,\wedge^2),
\]
and 
\[
\varepsilon_{*,S}(s,\Pi,\wedge^2,\psi)=\prod_{v\in S}\varepsilon_{*}(s,\Pi_v,\wedge^2,\psi_v),
\]
where $L_{*,S}(s,\pi,\wedge^2)$, $ L_{*}^S(s,\Pi,\wedge^2)$ and $\varepsilon_{*,S}(s,\Pi,\wedge^2,\psi)$ can denote 
any of the three partial $L$- or $\varepsilon$-factors according to whether $*=G$, $Sh$ or $JS$. 
It will also be useful to define the $\gamma$-factors 
\begin{equation}\label{galgammadefn}
\gamma_{*}(s,\pi,\wedge^2,\psi)=\frac{L_{*}(1-s,\tilde{\pi},\wedge^2)}{L_{*}(s,\pi,\wedge^2)}\varepsilon_{*}(s,\pi,\wedge^2,\psi),
\end{equation}
and
\begin{equation}\label{shgammadefn}
\gamma_{*,S}(s,\Pi,\wedge^2,\psi)=\frac{L_{*,S}(1-s,\widetilde{\Pi},\wedge^2)}{L_{*,S}(s,\Pi,\wedge^2)}
\varepsilon_{*,S}(s,\Pi,\wedge^2,\psi)
\end{equation}
Because Theorem \ref{henniart} guarantees the equality of the local $L$-functions, we see that establishing
the equality of the $\varepsilon$-factors is equivalent to establishing the equality of the $\gamma$-factors.
                                      
\section{The Galois and Langlands-Shahidi $L$- and $\varepsilon$-factors}\label{seclshmethod}
We quickly review the relevant theorems for the Galois and Langlands-Shahidi exterior square factors.

\subsection{The local theory} 
We recall the definition of the Galois exterior square factors.
As before, let $\psi:F\to \C$ be a fixed additive character and let $\pi\in \grepfr$. The local Langlands
correspondence associates an $r$-dimensional representation $\sigma$ of the Weil-Deligne group 
$W_F^{\prime}$. We define 
\[
\lgepif=L(s,\wedge^2(\sigma))\quad\text{and}\quad \epgepif=\varepsilon(s,\wedge^2(\sigma),\psi),
\]
where the $L$- and $\varepsilon$-factors on the right hand sides of the equalities above are the Artin factors. 

We will not recall the definitions of the Langlands-Shahidi factors, but record the following results below.
Combining the results of \cite{Shahidi85} (for $F$ archimedean) and \cite{Henniart2010} (for $F$ $p$-adic), we 
have the following theorem.
\begin{theorem}\label{henniart}
For $\pi\in \grepfr$,
\[ 
\lshepif = \lgepif.
\]
\end{theorem}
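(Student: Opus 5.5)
The plan is to reduce the equality $\lshepif=\lgepif$ to the case of square-integrable (and ultimately supercuspidal) representations, using the fact that both sides behave in the same way under parabolic induction, and then to handle the base case through an equality of $\gamma$-factors together with the way tempered $L$-factors are recovered from $\gamma$-factors. The archimedean and $p$-adic cases will be treated separately. For $F$ archimedean, the local Langlands correspondence is Langlands' classification. Shahidi's computation in \cite{Shahidi85} of the Langlands--Shahidi local coefficients for generic representations of real groups identifies them with Artin factors of the corresponding Weil group representations, and so gives both $L$- and $\varepsilon$-equality directly. I will take this as the archimedean input and concentrate on the $p$-adic case, following \cite{Henniart2010}.

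In the $p$-adic case, write $\pi=\pi_{\lambda}=\Ind_P^{G_r}(\Delta_1\times\cdots\times\Delta_k)$ with the $\Delta_i$ quasi-square-integrable, as in \eqref{pilambdadef}. Its parameter is $\sigma=\oplus_i\sigma_{\Delta_i}$, so
\[
\wedge^2\sigma=\bigoplus_i\wedge^2\sigma_{\Delta_i}\oplus\bigoplus_{i<j}\sigma_{\Delta_i}\otimes\sigma_{\Delta_j}.
\]
The Artin $L$-factor is therefore $\prod_i L_G(s,\Delta_i,\wedge^2)\prod_{i<j}L(s,\Delta_i\times\Delta_j)$.

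On the Langlands--Shahidi side, the $L$-factors of generic representations are defined (via the Langlands classification and the tempered $L$-function conjecture) so that the same multiplicativity holds. The cross terms are Rankin--Selberg factors, and these agree with the Galois ones by the local Langlands correspondence for $\gln$ (Harris--Taylor, Henniart). Hence it suffices to treat a single square-integrable $\Delta$, and after twisting by $\nu^s$, a discrete series $\Delta=\mathrm{St}_m(\rho)$ with $\rho$ supercuspidal of $G_d$.

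For discrete series, the first step is to show that the $\gamma$-factors agree: $\gamma_{Sh}(s,\pi,\wedge^2,\psi)=\gamma_G(s,\pi,\wedge^2,\psi)$. Multiplicativity of $\gamma$ on both sides reduces this to supercuspidal $\rho$. There one globalises $\rho$, using Theorem-\ref{henniart84}-type results as in \cite{Henniart1984}, to a cuspidal automorphic $\Pi$ whose other finite components are unramified or are principal series built from tamely ramified characters, where both sides can be computed. Comparing the Langlands--Shahidi global functional equation with the Artin functional equation of $\wedge^2$ of the associated Galois representation then gives the equality at $\rho$, via the stability of $\gamma$-factors under highly ramified twists to control the auxiliary places. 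The second step uses tempered-ness. For tempered $\pi$, $\lshepif$ is the normalised inverse of the polynomial giving the poles of $\gamma_{Sh}$ in $\re{s}\le 0$, cleared of those of $\gamma_{Sh}(1-s,\tilde\pi)$. This uses holomorphy and nonvanishing of tempered $L$-factors in $\re{s}>0$. The Artin factor of the tempered parameter $\wedge^2\sigma$ is characterised by its $\gamma$-factor in exactly the same way, since it has no poles in $\re{s}>0$. Equality of $\gamma$ therefore forces equality of $L$.

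I expect the main obstacle to be the supercuspidal $\gamma$-factor comparison. The global argument requires the Galois representation attached to $\Pi$, together with local--global compatibility for $\wedge^2$ at the auxiliary places. It also requires controlling the possible cancellation between poles of $L(s,\wedge^2)$ and $L(1-s,\tilde{\cdot},\wedge^2)$, which is where the tempered hypothesis is essential. The first thing I would try is the approach of \cite{CoShTs2017}. This realises $\wedge^2\rho$ through Sym/$\wedge$ decomposition of $\rho\times\rho$, whose $\gamma$-factor is already known to agree. The remaining ambiguity is fixed by a stability argument and the known equality for $\mathrm{Sym}^2$ in low rank, or by a Galois-side deformation argument.
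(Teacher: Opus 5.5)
The paper does not prove this statement; it quotes \cite{Shahidi85} for archimedean $F$ and \cite{Henniart2010} for $p$-adic $F$. Most of your reductions are sound. Multiplicativity of the $L$-factors on both sides reduces the problem to a single discrete series, and multiplicativity of $\gamma$ reduces the $\gamma$-comparison to supercuspidals. For tempered $\pi$ the $L$-factor is recovered from $\gamma$, because the poles of $L(s,\pi,\wedge^2)$ lie in $\re{s}\le 0$ while those of $L(1-s,\tilde\pi,\wedge^2)$ lie in $\re{s}\ge 1$; note that the former show up as \emph{zeros} of $\gamma$, not poles as you wrote.

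The gap is the supercuspidal base case. After globalising $\rho$ to a cuspidal $\Pi$ as in Theorem \ref{henniart84}, you compare the Langlands--Shahidi functional equation with ``the Artin functional equation of $\wedge^2$ of the associated Galois representation''. No such object is available. For a general cuspidal automorphic representation of $\glr(\ak)$ there is no known global Weil or Galois representation $\Sigma$ with $\Sigma_v$ corresponding to $\Pi_v$ at every place, in particular at $v_0$. Theorem \ref{henniart84} produces only the automorphic object. The automorphic functional equation alone expresses $\gamma_{Sh}$ at $v_0$ through the other places and a partial $L$-function, with nothing tying it to $\gamma_G(s,\wedge^2\sigma_\rho,\psi)$. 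A Galois side can be manufactured when $\sigma_\rho$ is induced from a character along a tower of cyclic extensions: globalise the character, automorphically induce, and use the functional equation of the Weil-group $L$-function $L(s,\wedge^2\Sigma)$. That route excludes primitive wildly ramified parameters, and it is not what you set up. You flag this step as the main obstacle but leave it unresolved.

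Your fallback does not fill the gap. The identity $\gamma(s,\rho\times\rho)=\gamma(s,\rho,\wedge^2)\gamma(s,\rho,\mathrm{Sym}^2)$ holds on both sides but controls only the product. Splitting it is precisely the content of the theorem: the poles of $L(s,\rho\times\rho)$ on the unitary axis must be apportioned between $\wedge^2$ and $\mathrm{Sym}^2$. In effect, for self-dual supercuspidal $\rho$ you must show that $L_{Sh}(s,\rho,\wedge^2)$ has a pole at $s=0$ if and only if $\sigma_\rho$ is symplectic. Stability gives agreement only for twists $\rho\otimes\chi$ with $\chi$ highly ramified, which says nothing about this dichotomy for $\rho$ itself. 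An equality for $\mathrm{Sym}^2$ in low rank gives nothing in general rank. So the proposal proves the theorem only modulo the supercuspidal $\gamma$-comparison. That comparison is needed only up to a factor $cq^{-ms}$, which has no zeros or poles and so suffices for the $L$-statement. This missing input is the real content of \cite{Henniart2010}.
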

\begin{remark} The $L$-functions $\lshepif$ and $\lgepif$ are defined for all irreducible admissible representations
of $G_r$ and the Theorem \ref{henniart} holds for this larger class.
\end{remark}
When $F$ is a $p$-adic field, the factors $\epshepif$ have the form 
$w_{Sh}(\pi)q^{-\kappa_{Sh}(\pi)s}$, where
$w_{Sh}(\pi)\in \C$ and $\kappa_{Sh}(\pi)\in \N$. 
When $F$ is an archimedean field, the $\varepsilon$-factors $\epshepif$ have the form 
$w_{Sh}(\pi)B^{-\kappa_{Sh}(\pi)s}$, where $w_{Sh}(\pi)\in \C$, $B\in \R_{>0}$, and 
$\kappa_{Sh}(\pi)s\in \Z_{\ge 0}$. For a suitably normalised choice of the character $\psi$, we can 
take $B=1$. The following result was proved in \cite{Shahidi85}.
\begin{theorem}\label{shahidi85} Let $F$ be an archimedean local field, and let
$\pi$ be a smooth irreducible representation of $G_r$. Then
\[ 
\epshepif = \epgepif.
\]
\end{theorem}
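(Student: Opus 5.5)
The statement is Shahidi's theorem for archimedean $F$: $\epshepif=\epgepif$ for every smooth irreducible $\pi$ of $G_r$. I would prove it by reducing both sides to the case of at most two-dimensional inducing data, where they can be computed explicitly.

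\textbf{Step 1 (the Galois side).} By Langlands' classification, every irreducible $\pi$ of $G_r$ with $F=\R$ or $\C$ is a subquotient (the Langlands quotient) of an induced representation
\[
\Ind_P^{G_r}(\Delta_1\otimes\cdots\otimes\Delta_k),
\]
where each $\Delta_i$ is a quasi-square-integrable representation of $G_{r_i}$ with $r_i\le 2$. The $L$-parameter $\sigma$ of $\pi$ is then $\sigma=\oplus_i\sigma_i$, with $\sigma_i$ the parameter of $\Delta_i$. It follows that
\[
\wedge^2\sigma=\bigoplus_i\wedge^2\sigma_i\oplus\bigoplus_{i<j}\sigma_i\otimes\sigma_j .
\]
Since Artin $\varepsilon$-factors are multiplicative in direct sums, $\epgepif$ factors as the product of $\varepsilon(s,\wedge^2\sigma_i,\psi)$ over $i$ and $\varepsilon(s,\sigma_i\otimes\sigma_j,\psi)$ over $i<j$. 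Here $\wedge^2\sigma_i$ is either $0$ or the determinant character.

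\textbf{Step 2 (the Langlands--Shahidi side).} I would derive the same factorization for $\epshepif$ from the multiplicativity of Langlands--Shahidi $\gamma$-factors. The $\gamma$-factor of $\pi$ depends only on the inducing data, so $\gamma_{Sh}(s,\pi,\wedge^2,\psi)$ is the product of the $\gamma_{Sh}(s,\Delta_i,\wedge^2,\psi)$ and the Rankin--Selberg factors $\gamma(s,\Delta_i\times\Delta_j,\psi)$. The $L$-factors on both sides agree by Theorem \ref{henniart}, which in the archimedean case is Shahidi's result for all irreducible representations. Hence, exactly as in the remark after (\ref{shgammadefn}), it suffices to compare $\gamma$-factors rather than $\varepsilon$-factors.

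\textbf{Step 3 (the base cases).} This reduces the claim to $r_i,r_j\le 2$. For the exterior square of $G_1$ or $G_2$, the Langlands--Shahidi factor is computed from the Levi $G_2\subset {\rm GSp}_4$, or directly as the Tate factor of the central character. For Rankin--Selberg pairs $G_a\times G_b$ with $a,b\le 2$, the identity is the archimedean equality of Langlands--Shahidi and Artin factors. In both cases the check is explicit: it comes down to computing Harish-Chandra $c$-functions and comparing Gamma factors, using Tate's thesis in the abelian case.

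The main obstacle is the normalization. One must show that the Langlands--Shahidi local coefficient, with Whittaker data compatible with $\psi$, produces exactly the Artin root numbers, including the constant $w_{Sh}(\pi)$ and not just the $L$-factors. I would handle this by computing the local coefficient via Jacquet's integral on the rank-one pieces, and fixing the remaining constant by a global argument: globalize to a Hecke character or a ${\rm GL}_2$ form and compare the two global functional equations.
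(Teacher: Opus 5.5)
The paper does not prove this statement; it quotes it from \cite{Shahidi85}. Your Steps 1 and 2 are sound: the Langlands classification, additivity of Artin factors, multiplicativity of the Langlands--Shahidi $\gamma$-factors, and equality of $L$-factors. The argument does not close at Step 3, however.

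After your reduction, the exterior-square input is elementary. Indeed, $\wedge^2$ of a two-dimensional parameter is its determinant, so only Tate's factor for the central character is needed. All the content sits in the Rankin--Selberg pairs that involve a discrete series $D$ of ${\rm GL}_2(\R)$. Its parameter $\mathrm{Ind}_{W_\C}^{W_\R}\mu$ is irreducible, so neither side is visibly a product of Tate factors. For exactly these pairs you invoke ``the archimedean equality of Langlands--Shahidi and Artin factors''. That is Shahidi's theorem for the Levi ${\rm GL}_a\times{\rm GL}_b\subset{\rm GL}_{a+b}$, i.e.\ the same result for another representation of the $L$-group. As a reduction of the $\wedge^2$ case to the Rankin--Selberg case this is legitimate, but then nothing is left to normalise. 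If you instead mean to verify the base case yourself, the key step is missing, and ``computing $c$-functions'' plus a global argument does not supply it.

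The missing idea is a purely local reduction.
\begin{itemize}
\item $D$ is the generic subrepresentation of a principal series induced from $\chi_1\otimes\chi_2$, and the quotient is finite-dimensional, hence non-generic. So the Jacquet Whittaker functional does not vanish on $D$, and the standard intertwining operators restrict to the subrepresentation.
\item Consequently, the local coefficient of anything induced from $D$ equals that of the corresponding principal series. Multiplicativity then writes it as a product of Tate local coefficients, with exact constants.
\item On the Artin side you need the exact identity $\gamma(s,\mathrm{Ind}_{W_\C}^{W_\R}\mu\otimes\tau,\psi)=\gamma(s,\chi_1\tau,\psi)\,\gamma(s,\chi_2\tau,\psi)$ for characters $\tau$, and its analogue for two-dimensional $\tau$. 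This follows from the duplication and reflection formulas for $\Gamma$, root numbers included.
\end{itemize}
This is essentially the route of \cite{Shahidi85}: embed generic representations in principal series, apply multiplicativity, do explicit rank-one computations, and compare with the Langlands parameters. It fixes every constant locally.

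Your global normalisation is therefore unnecessary, and as stated it is not really available. You would have to control the ramified finite places of the globalisation; over $\Q$, for instance, odd or small weights already force a level. You would also need a global functional equation whose archimedean factors are already known to be Artin factors. That presupposes an archimedean statement of the same kind as the one you are trying to prove.
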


We now describe the factors of (generic) parabolically induced representations in terms of the
factors of the inducing data. For the Galois factors this a straightforward consequence of the formalism of the 
Artin $L$-functions, while for the Langlands-Shahidi factors, this is a consequence of the multiplicativity of the 
gamma factors proved in \cite{Shahidi1990a} and \cite{Henniart2010} in conjunction with the fact that the LLC preserves pairs of the local factors (\cite{HaTa2001}, \cite{Henniart2000}).
\begin{proposition}\label{inductextsq} Let $\pi_{\lambda}=\Ind_P^{G_r}\sigma_{\lambda}=\Ind_P^{G_r}(\Delta_1\times\cdots \Delta_k)$,
where the $\Delta_i=\sigma_i\nu^{\lambda_i}$ are quasi-square-integrable representations, $1\le i\le k$.
For $*=G$ or $*=Sh$, and every $\lambda\in\C^k$, we have 
\begin{equation}\label{lfnofparind}
L_{*}(s,\pi_{\lambda},\wedge^2)=\prod\limits_{i=1}^{k}
L_{*}(s+2\lambda_i,\sigma_{i},\wedge^2)\prod\limits_{1\leq i<j\leq k}
L_{*}(s+\lambda_i+\lambda_j,\sigma_{i}\times\sigma_{j})
\end{equation} 
and
\begin{equation}\label{epfnofparind}
\varepsilon_{*}(s,\pi_{\lambda},\wedge^2,\psi)=\prod\limits_{i=1}^{k}
\varepsilon_{*}(s+2\lambda_i,\sigma_{i},\wedge^2,\psi)\prod\limits_{1\leq i<j\leq k}
\varepsilon_{*}(s+\lambda_i+\lambda_j,\sigma_{i}\times\sigma_{j},\psi).
\end{equation}
\end{proposition}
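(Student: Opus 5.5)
The plan is to treat the Galois factors first and then transfer the statement to the Langlands--Shahidi factors.

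\emph{Galois factors.} By the local Langlands correspondence for $G_r$ (\cite{HaTa2001}, \cite{Henniart2000}), the parameter of $\pi_\lambda$ is
\[
\rho(\pi_\lambda)=\bigoplus_{i=1}^k \rho(\sigma_i)\otimes|\cdot|^{\lambda_i}.
\]
This uses compatibility of the correspondence with parabolic induction and twisting, together with the fact that the irreducible generic representation $\pi_\lambda$ is the full induced representation. For a direct sum of representations one has the decomposition
\[
\wedge^2(\oplus_i \rho_i)=\bigoplus_i \wedge^2\rho_i\ \oplus\ \bigoplus_{i<j}\rho_i\otimes\rho_j .
\]
Artin $L$- and $\varepsilon$-factors are additive in the representation, and twisting by $|\cdot|^{t}$ shifts $s$ by $t$. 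The summands here are $\wedge^2(\rho_i|\cdot|^{\lambda_i})=\wedge^2\rho_i\,|\cdot|^{2\lambda_i}$ and $\rho_i\otimes\rho_j\,|\cdot|^{\lambda_i+\lambda_j}$. This gives \eqref{lfnofparind} and \eqref{epfnofparind} for $*=G$, where the Rankin--Selberg factors on the right are read as $L(s,\rho_i\otimes\rho_j)$ and $\varepsilon(s,\rho_i\otimes\rho_j,\psi)$. These equal the corresponding factors of the pair $(\sigma_i,\sigma_j)$, because the correspondence preserves Rankin--Selberg factors of pairs.

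\emph{Langlands--Shahidi factors: the $L$-function.} For $*=Sh$, formula \eqref{lfnofparind} follows directly from Theorem~\ref{henniart} together with the Galois case. This holds for all $\lambda$, by the remark following that theorem.

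\emph{Langlands--Shahidi factors: the $\varepsilon$-factor.} I would use multiplicativity of $\gamma$-factors (\cite{Shahidi1990a}, \cite{Henniart2010}), which gives
\[
\gamma_{Sh}(s,\pi_\lambda,\wedge^2,\psi)=\prod_i\gamma_{Sh}(s+2\lambda_i,\sigma_i,\wedge^2,\psi)\prod_{i<j}\gamma(s+\lambda_i+\lambda_j,\sigma_i\times\sigma_j,\psi).
\]
Henniart's result identifies $\gamma_{Sh}$ with $\gamma_G$ for all generic representations, including the discrete series $\sigma_i$. Combined with Theorem~\ref{henniart}, this gives $\varepsilon_{Sh}=\varepsilon_G$ for $\pi_\lambda$ and for each $\sigma_i$. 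The Rankin--Selberg factors of pairs $\sigma_i\times\sigma_j$ already agree with the Galois ones. Hence \eqref{epfnofparind} for $*=Sh$ is the Galois identity rewritten term by term. In the archimedean case, the same argument runs using Theorem~\ref{shahidi85} and \cite{Shahidi85} in place of Henniart.

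\emph{Main obstacle.} The delicate point is to use $\varepsilon_{Sh}=\varepsilon_G$ only where it is actually available, namely through the $\gamma$-factor identity and the equality of $L$-functions. One must not simply divide $\gamma$ by the product of $L$-functions, since there are possible cancellations between numerator and denominator. Going through the Galois side avoids this, because there the $\varepsilon$-factors are multiplicative on the nose.
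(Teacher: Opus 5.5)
Your treatment of the Galois factors and of $L_{Sh}$ is fine and agrees with the paper. The step for $\varepsilon_{Sh}$, however, has a genuine gap.

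The input you invoke, that ``Henniart's result identifies $\gamma_{Sh}$ with $\gamma_G$ for all generic representations'', is not what \cite{Henniart2010} provides. Henniart proves $L_{Sh}=L_G$ (Theorem~\ref{henniart}), but his argument controls the $\gamma$-factors (equivalently the $\varepsilon$-factors) only up to a root of unity. The full equality $\varepsilon_{Sh}=\varepsilon_G$ for generic $\pi$ is the theorem of Cogdell--Shahidi--Tsai \cite{CoShTs2017}. As the paper recalls in the remark after Proposition~\ref{scepjsequalsepshprop}, their passage from supercuspidal representations to arbitrary generic ones rests on the inductive formula \eqref{epfnofparind} itself. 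That passage is exactly what you need, both for $\pi_\lambda$ and for non-supercuspidal $\sigma_i$. So deriving \eqref{epfnofparind} for $*=Sh$ from $\varepsilon_{Sh}=\varepsilon_G$ is circular. Notice also that once that equality is assumed, the multiplicativity of $\gamma_{Sh}$ you display is never used.

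The obstacle that drove you to this detour is not real. Cancellation between numerator and denominator matters only if one tries to read off the $L$-factors from $\gamma$. Here the $L$-factors are already known to be multiplicative, both for $\pi_\lambda$ and for $\tilde\pi_\lambda$ (whose inducing data are the $\tilde\sigma_i\nu^{-\lambda_i}$), by Theorem~\ref{henniart} and the Galois case.

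So multiply your displayed $\gamma$-identity by $L_{Sh}(s,\pi_\lambda,\wedge^2)/L_{Sh}(1-s,\tilde\pi_\lambda,\wedge^2)$. Then use \eqref{galgammadefn} for each $\sigma_i$ at $s+2\lambda_i$, and the analogous relation $\gamma=\varepsilon\,L(1-s,\tilde\sigma_i\times\tilde\sigma_j)/L(s,\sigma_i\times\sigma_j)$ for pairs at $s+\lambda_i+\lambda_j$. This is a manipulation of identities of meromorphic functions, and it gives \eqref{epfnofparind} for $*=Sh$ at once.

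This is precisely the paper's argument. It combines multiplicativity of $\gamma_{Sh}$ \cite{Shahidi1990a}, \cite{Henniart2010} with the fact that the local Langlands correspondence preserves Rankin--Selberg factors of pairs \cite{HaTa2001}, \cite{Henniart2000}. The latter identifies the pair factors in Shahidi's multiplicativity formula with the Galois ones.
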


\begin{remark} Proposition \ref{inductextsq} was proved separately for the Galois and Langlands-Shahidi exterior square
 factors, and was an ingredient in the proofs showing the equality of the two sets of factors. 
\end{remark}

\subsection{The Global Theory}\label{lsmglobal}

Let $\Pi = \otimes'_{v}\Pi_{v}$ be a unitary cuspidal representation of $G_r(\ak)$ and let 
$\psi=\otimes'_v\psi_v$ be a global additive character trivial on $k$. We define the completed (global) Langlands-Shahidi $L$-function as
\begin{equation*}
\lshePi:= \prod_{v}\lshePiv
\end{equation*} 
and the global Langlands-Shahidi $\varepsilon$-factor as
\[
\epshePi:=\prod_{v}\epshePiv,
\]
where $v$ runs over all the absolute values of $k$. It is a fact that the global $\varepsilon$-factor does not depend
on the choice of $\psi$ even though the local factors depend on the choices of the local additive characters $\psi_v$. It is obviously entire and non-vanishing on $\C$.
Combining Propositions 3.1 and 3.4 of \cite{Kim99} we obtain:
\begin{proposition}
The function $\lshePi$ is holomorphic for $\text{Re}(s) > 1$.
\end{proposition}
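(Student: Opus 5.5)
The statement to prove is that $\lshePi$ is holomorphic for $\res>1$, where $\Pi$ is a unitary cuspidal representation of ${\rm G}_r(\ak)$. The plan is to show that the Euler product converges absolutely there, away from a finite set of places, and to handle the remaining places by hand.

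First fix a finite set $S$ of places containing $S_\infty$ such that $\Pi_v$ and $\psi_v$ are unramified for $v\notin S$. For such $v$, the unramified computation of the Langlands--Shahidi method gives
\[
\lshePiv=\prod_{i<j}(1-\alpha_{i,v}\alpha_{j,v}q_v^{-s})^{-1},
\]
where the $\alpha_{i,v}$ are the Satake parameters. Luo--Rudnick--Sarnak type bounds give $|\alpha_{i,v}|\le q_v^{1/2-1/(r^2+1)}$. That bound alone does not reach $\res>1$, so I would not bound the product factor by factor. Instead I would compare with Rankin--Selberg. Formally,
\[
\log L^S(s,\Pi\times\Pi)=\log L^S(s,\Pi,\mathrm{Sym}^2)+\log L^S(s,\Pi,\wedge^2),
\]
and the Dirichlet coefficients of $\log L^S(s,\Pi\times\widetilde\Pi)$ are nonnegative. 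The Jacquet--Shalika absolute convergence of $L^S(s,\Pi\times\widetilde\Pi)$ for $\res>1$, together with a Cauchy--Schwarz argument on the coefficients $\sum_{i<j}(\alpha_i\alpha_j)^m$, should then give absolute convergence of the partial product $L_{Sh}^S(s,\Pi,\wedge^2)$ for $\res>1$. The conclusion is that it is holomorphic and nonvanishing there. This is essentially the content of Kim's Proposition 3.1, and I would cite it as such.

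Second, I would treat the finitely many $v\in S$. For each $v$, $\lshePiv$ is a reciprocal of a polynomial in $q_v^{-s}$ (or a product of Gamma factors when $v$ is archimedean), so its poles are governed by the exponents of $\Pi_v$. By Theorem \ref{henniart} and Proposition \ref{inductextsq}, writing $\Pi_v=\Ind(\Delta_1\times\cdots\times\Delta_k)$, the poles come from the factors $L(s+2\lambda_i,\sigma_i,\wedge^2)$ and $L(s+\lambda_i+\lambda_j,\sigma_i\times\sigma_j)$. Tempered factors of this kind are holomorphic for $\res>0$, and the exponents of a local component of a unitary cuspidal representation satisfy $|\re{\lambda_i}|<1/2$. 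Hence every local factor is holomorphic for $\res>1$; this is Kim's Proposition 3.4.

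The expected obstacle is the convergence of the unramified partial product on exactly $\res>1$, since the naive Ramanujan-type bounds fall short. The Rankin--Selberg domination argument is the step I would try first. Failing that, I would use the identity $L(s,\Pi\times\Pi)=L(s,\Pi,\mathrm{Sym}^2)L(s,\Pi,\wedge^2)$ together with the known nonvanishing of the Rankin--Selberg $L$-function on $\res\ge1$.
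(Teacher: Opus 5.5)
The paper gives no argument beyond citing Kim's Propositions 3.1 and 3.4. Your two-step split into the unramified partial product and the finitely many places in $S$ is the right shape.

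Step 2 is correct. A local component of a unitary cuspidal $\Pi$ is unitary generic, so its exponents satisfy $|\re{\lambda_i}|<1/2$. Proposition \ref{inductextsq} then makes every $L_{Sh}(s,\Pi_v,\wedge^2)$ holomorphic for $\res>1$.

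\textbf{The gap is in Step 1.} The coefficient of $q_v^{-ms}$ in $\log L^S(s,\Pi,\wedge^2)$ is
\[
\frac1m\sum_{i<j}(\alpha_{i,v}\alpha_{j,v})^m=\frac{1}{2m}\Bigl[\Bigl(\sum_i\alpha_{i,v}^m\Bigr)^2-\sum_i\alpha_{i,v}^{2m}\Bigr].
\]
The first piece is dominated by the nonnegative coefficient $\frac1m\bigl|\sum_i\alpha_{i,v}^m\bigr|^2$ of $\log L^S(s,\Pi\times\widetilde\Pi)$. The second piece is not. It is exactly the $q_v^{-2mw}$-part of $\log L^S(w,\Pi)$ at $w=s/2$, i.e. at $\re{w}>1/2$.

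Cauchy--Schwarz against the Rankin--Selberg coefficients of index $2m$ controls $\sum_{v,m}\frac1m\bigl|\sum_i\alpha_{i,v}^{2m}\bigr|q_v^{-m\sigma}$ only for $\sigma>3/2$. Luo--Rudnick--Sarnak controls it only for $\sigma>2-2/(r^2+1)$. Absolute convergence of the partial Euler product on all of $\res>1$ is a Ramanujan-type statement and is not known for general $r$. So your conclusion ``holomorphic and nonvanishing'' is not justified by this route.

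Your fallback fails as well. Holomorphy and nonvanishing of $L^S(s,\Pi\times\Pi)$ constrain only the product $L^S(s,\Pi,\mathrm{Sym}^2)L^S(s,\Pi,\wedge^2)$. A pole of the exterior square factor could be cancelled by a zero of the symmetric square factor.

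\textbf{What to use instead.} All you need is holomorphy of $L^S(s,\Pi,\wedge^2)$ on $\res>1$, and that must come from a meromorphic continuation, not from the Euler product. One way uses the folded global Jacquet--Shalika integral:
\begin{itemize}
\item For $r=2n$ it pairs the cusp form with an Eisenstein series on ${\rm GL}_n$ whose only poles are at $s=0,1$; for $r$ odd it involves no Eisenstein series. Either way it is holomorphic on $\res>1$.
\item By continuation from $\res\gg0$ and Proposition \ref{unramvector}, it equals $L^S(s,\Pi,\wedge^2)\prod_{v\in S}J(s,W_v,\Phi_v)$. The unramified Jacquet--Shalika and Langlands--Shahidi factors agree.
\item The local integrals at $S$ are holomorphic on $\res>1$ by Proposition \ref{jsintconv}.
\item Proposition \ref{archnonvanish} lets you choose the data at $S$ so that these integrals are nonzero at any given $s_0$.
\end{itemize}
This gives holomorphy of $L^S$ at every $s_0$ with $\re{s_0}>1$, and your Step 2 finishes the proof.

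Alternatively, cite Kim for this holomorphy, as the paper does. But do not present it as the output of your Cauchy--Schwarz argument, which cannot reach the line $\res=1$.
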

Theorem 7.7 of \cite{Shahidi90} gives further information on the global $L$-function.
\begin{theorem}\label{lshfnaleqnthm}
The $L$-function $\lshePi$ admits a meromorphic continuation to the entire complex plane 
and satisfies a functional equation
\begin{equation}\label{lshfnaleqn}
\lshePi = \epshePi L_{Sh}(1-s, \tilde{\Pi}, \wedge^{2}),
\end{equation} 
where $\tilde{\Pi}$ denotes the representation contragredient to $\Pi$.
\end{theorem}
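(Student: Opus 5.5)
The plan is to run the Langlands--Shahidi method for the single maximal parabolic that produces the exterior square, in the form used in \cite{Shahidi90}. First I will realise ${\rm G}_r$ as the Levi subgroup $M$ of the Siegel maximal parabolic $P=MU$ of a split group ${\rm H}$ of type $D_r$, either ${\rm SO}_{2r}$ or its similitude/spin version so that $M\cong \glr$ exactly. The adjoint action of ${}^LM$ on ${}^L\mathfrak u$ is irreducible and equals $\wedge^2$ of the standard representation, up to a central twist. So in Shahidi's notation there is only one constituent $r_1=\wedge^2$, and no auxiliary $L$-functions $r_i$, $i\ge2$, enter the argument.

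Next I will form, for a cusp form $\varphi\in\Pi$ and $s\in\C$, the Eisenstein series $E(s,\varphi,h)$ on ${\rm H}(\ak)$ induced from $\Pi\otimes|\det|^{s}$. By Langlands' theory, $E$ has meromorphic continuation in $s$ and satisfies
\[
E(s,\varphi)=E(-s,M(s)\varphi),
\]
where $M(s)$ is the standard intertwining operator. I will choose a finite set $S\supset S_\infty$ of places outside which $\Pi_v$, $\psi_v$ and the data are unramified. I then compute the $\psi$-Whittaker coefficient of $E$, which is nonzero because $\Pi$ is globally generic. For factorisable $\varphi$, this coefficient is the product of the local Jacquet integrals at $v\in S$ with the unramified contribution. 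By the Casselman--Shalika formula, the unramified contribution equals $L^S(1+s,\Pi,\wedge^2)^{-1}$, after normalising $s$ so that the argument matches.

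Comparing Whittaker coefficients on both sides of the Eisenstein functional equation, and using uniqueness of local Whittaker functionals, gives the crude functional equation
\[
L^S(s,\Pi,\wedge^2)=\prod_{v\in S}\gamma_{Sh}(s,\Pi_v,\wedge^2,\psi_v)\,L^S(1-s,\tilde\Pi,\wedge^2),
\]
where the local $\gamma$-factors are Shahidi's local coefficients. This equation also yields the meromorphic continuation of $L^S$, since $L^S(1+s)^{-1}$ is a ratio of meromorphic functions. Finally I will multiply both sides by $L_{Sh,S}(s,\Pi,\wedge^2)$. By the definition of the local factors one has $\gamma_{Sh}=\varepsilon_{Sh}\,L_{Sh}(1-s,\tilde\pi)/L_{Sh}(s,\pi)$ at each $v\in S$. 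At $v\notin S$ one has $\varepsilon_{Sh}(s,\Pi_v,\wedge^2,\psi_v)=1$ for unramified $\psi_v$. Together these give exactly the stated equation $\lshePi = \epshePi L_{Sh}(1-s,\tilde\Pi,\wedge^2)$.

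The main obstacle is the step where the local factors at $v\in S$ are packaged so that the completed identity holds, rather than just the partial one. This requires $L_{Sh}$ and $\varepsilon_{Sh}$ to be defined compatibly with $\gamma_{Sh}$ for every generic local component $\Pi_v$, including non-tempered ones; unitary cuspidal $\Pi$ can have non-tempered local components. I would handle this first for tempered $\Pi_v$, using Shahidi's definition of $L_{Sh}$ through the polynomial normalisation of $\gamma_{Sh}$. I would then reduce the general generic case to the tempered one via Langlands classification and multiplicativity of $\gamma_{Sh}$, so that both sides factor compatibly as in Proposition \ref{inductextsq}. The archimedean places are covered by \cite{Shahidi85}.
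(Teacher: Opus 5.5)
Your outline is correct and is essentially the argument behind the result, which the paper does not prove but simply quotes as Theorem 7.7 of \cite{Shahidi90}: realise $\wedge^2$ on the Siegel Levi $\glr$ of a split group of type $D_r$ (as in \cite{Kim99}), obtain the crude functional equation from the Whittaker coefficients of the Eisenstein series, and complete it using Shahidi's local definitions of $L_{Sh}$ and $\varepsilon_{Sh}$ through $\gamma_{Sh}$ (first for tempered components, then via Langlands classification and multiplicativity), with \cite{Shahidi85} at the archimedean places. Two details to keep straight are these. For $r$ odd the Siegel parabolic of $D_r$ is not self-associate, so the Eisenstein functional equation relates $E_P(s,\cdot)$ to $E_{P'}(-s,\cdot)$ for the associate parabolic $P'$; this changes nothing in the argument. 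At $v\notin S$ you also need Shahidi's $L_{Sh}(s,\Pi_v,\wedge^2)$ to agree with the Satake factor produced by the Casselman--Shalika computation, which again follows from multiplicativity down to the torus.
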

More precise information of about the holomorphy of $\lshePi$ is available (indeed, it is almost always entire -- 
see \cite{Kim99} and Corollary 7.5 of \cite{KeRa2012} for example) but will not be required in this paper.

\section{The Jacquet-Shalika integrals: known results}\label{secjsint}
We collect some of the results involving the Jacquet-Shalika integrals that are already known and record them below.

\subsection{The local integrals}\label{localintegralssubsec}

Let $\pi$ be an generic representation of $G_r$ with Whittaker model $\wpif$.
Jacquet and Shalika (in \cite{JaSh90a}) defined two families of integrals $\jswphif$ and $\jswf$ depending on whether $r$ is even or odd.

If $r$ is even, say $r = 2n $, we let
\begin{flalign}\label{J(even)}
\jswphif = 
\int_{N_{n} \backslash G_{n}}\int_{\lieb_{n}\backslash \matn} &W\left(\sigma\begin{pmatrix} I_{n} & X \\ 0 & I_{n} \end{pmatrix}
\begin{pmatrix} g & 0 \\ 0 & g \end{pmatrix}\right)\nonumber\\
&\psi(-\tr X)dX \phi(e_{n}g) |{\det g}|^{s} dg,
\end{flalign}
for each $W\in\wpif$ and $\phi$ in $\sfnf$, where $s$ is in $\C$, and 
$\sigma $ is the permutation given by
\[
\sigma = \begin{pmatrix}
1 & 2 & \cdots & n & | & n+1 & n+2 & \cdots & 2n \\
1 & 3 & \cdots & 2n-1 & | & 2 & 4 & \cdots & 2n 
\end{pmatrix}.
\]

If $r$ is odd, say $ r = 2n-1$, we define
\[
W_{1}(g)=\int_{F^n}W\left(g\begin{pmatrix}I_n&0&0\\
                                                              0&I_n&0\\
                                                              0&Z&1\end{pmatrix}\right)
\phi(Z)dZ,\,\,\text{and}
\]
\[
W_{2}(g)=\int_{F^n}W\left(g\begin{pmatrix}I_n&0&Y\\
                                                              0&I_n&0\\
                                                              0&0&1\end{pmatrix}\right)
\widehat{\phi}(Y)dY,
\]
for $W\in \wpif$ and $\phi\in \sfnf$.
Note that both the integrals above are (finite) sums of right-translates of $W$, so $W_1,W_2\in\wpif$.
We now define the integrals
\begin{flalign}\label{J(odd)}
\jswf = 
\int_{N_{n} \backslash G_{n}}\int_{\liebn\backslash \matn}&W\left(\sigma\begin{pmatrix} I_{n} & X & 0\\ 0 & I_{n} & 0\\ 0 & 0 & 1 \end{pmatrix}\right)
\left(\begin{pmatrix}g & 0 & 0 \\ 0 & g & 0\\ 0 & 0 & 1 \end{pmatrix}\right)\nonumber \\
 &\psi(-\tr X)dX |{\det g}|^{s-1} dg, 
\end{flalign}
for each $W$ in $\wpif$, where  $ \sigma $ is the permutation given by
\[
\sigma = \begin{pmatrix}
1 & 2 & \cdots & n & | & n+1 & n+2 & \cdots & 2n & 2n-1 \\
1 & 3 & \cdots & 2n-1 & | & 2 & 4 & \cdots & 2n & 2n-1
\end{pmatrix}.
\]
Proposition 1 of Section 7 and Proposition 3 of Section 9 of \cite{JaSh90a} together show
\begin{proposition} \label{jsintconv}
Let $\pi\in \operatorname{Irr}_{\rm{gen}}(G_{r})$ be a unitary generic representation of $G_n$. There exists $\eta > 0 $ 
such that the integrals $\jswphif$ (resp. $\jswf$) converge absolutely for 
$\text{Re}(s) >  1 - \eta$ for every $W\in \wpif$ and $\phi\in \sfnf$ .
\end{proposition}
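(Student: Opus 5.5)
The plan is to prove absolute convergence by majorising $|W|$ on the relevant region with gauge functions, and then reducing the Jacquet--Shalika integral to an explicit integral over a torus and a unipotent variable. This is the strategy of \cite{JaSh90a}.

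\textbf{Step 1: bound the Whittaker functions.} Since $\pi$ is unitary and generic, it is of the form $\pi_\lambda$ with $|\re{\lambda_i}|<1/2$, by the classification of unitary generic representations (Vogan/Tadić). Then every $W\in\wpif$ is majorised by a gauge. Concretely, on $N_rA_rK$ we have
\[
|W(nak)|\le \xi(a)\,\delta_{B_r}^{1/2}(a)\prod_i|a_i/a_{i+1}|^{-\epsilon_0},
\]
where $\xi$ is a Schwartz function in the ratios $a_i/a_{i+1}$ and $\epsilon_0<1/2$ depends only on the exponents of $\pi$. This is the standard asymptotic expansion of Whittaker functions along the torus.

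\textbf{Step 2: reduce to an integral over the torus and the unipotent variable.} Use the Iwasawa decomposition $g=nak$ in $G_n$. After conjugating by $\sigma$, the matrix $\begin{pmatrix}g&\\&g\end{pmatrix}$ becomes the torus element $\mathrm{diag}(a_1,a_1,a_2,a_2,\ldots)$ up to unipotent factors. The key difficulty is the inner integral over $X\in\liebn\backslash\matn$, which is noncompact. Following \S7 of \cite{JaSh90a}, I would write $\sigma\begin{pmatrix}I&X\\&I\end{pmatrix}\sigma^{-1}$ as a lower-triangular unipotent element $\bar u(X)$. I would then use the Iwasawa decomposition $\bar u(X)=n(X)t(X)k(X)$, together with the standard estimate that $t(X)$ is bounded with respect to the positive chamber, as in Lemma 6.1 of Jacquet--Shalika. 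With these two ingredients, the full integral is dominated by
\[
\int_{A_n}\int_{X}\xi\bigl(a\,t(X)\bigr)\,\delta^{1/2}\delta_{B_n}^{-1}(a)\,|a|^{\re{s}}\,\|t(X)\|^{-c}\,dX\,da,
\]
times $|\phi|$. In the odd case the extra variable is handled by the $\phi$-integrals in $W_1$; alternatively, since $\jswf$ contains no $\phi$, it is handled directly.

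\textbf{Step 3: the main obstacle, convergence of the $X$-integral.} This is where the work lies. One needs $\int_X \|t(X)\|^{-\kappa}dX<\infty$ for the relevant exponent $\kappa$. By the Jacquet--Shalika estimate, this holds once $\kappa>1-\eta'$, and the unitarity bound $\epsilon_0<1/2$ leaves a margin. I would follow their inductive argument over the entries of $X$ one superdiagonal at a time.

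\textbf{Step 4: the torus integral.} The remaining torus integral is a product of one-variable integrals $\int |t|^{\re{s}-1+\text{(exponent)}}\xi(t)\,d^\times t$. These converge for $\re{s}>1-\eta$, where $\eta$ is determined by the gap $1/2-\epsilon_0$. Since every exponent appearing is uniform in $W$ and $\phi$, a single $\eta$ works for all $W\in\wpif$ and $\phi\in\sfnf$.
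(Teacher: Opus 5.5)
The paper gives no argument of its own here; it quotes \cite{JaSh90a}. Your outline has the right shape for that argument, but the two quantitative inputs you supply are wrong, so the sketch does not prove the statement.

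First, the gauge bound in Step~1 fails for $r\ge 4$: there is no per-root exponent $\epsilon_0<1/2$. Take $\pi=\nu^{\alpha}\times\nu^{\alpha}\times\nu^{-\alpha}\times\nu^{-\alpha}$ on $G_4$ with $0<\alpha<1/2$. It is irreducible, unitary (a product of two complementary series), generic and unramified. The Casselman--Shalika formula gives $W^0(\mathrm{diag}(\varpi^m,\varpi^m,1,1))\ge q^{-2m}q^{2\alpha m}$, i.e. growth $\delta_{B_4}^{1/2}|t|^{-2\alpha}$ along $t=\varpi^m$. Also, by Proposition~\ref{unramvector}, $J(s,W^0,\phi^0)=\lepif$ contains the factor $(1-q^{2\alpha-s})^{-1}$, so it has a pole at $s=2\alpha$, which can be as close to $1$ as you like. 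Your bound, fed into the torus integral, would give convergence on $\re{s}>1/2$, which is therefore impossible.

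The correct input is the exponent $c_j$ along the $j$-th simple root, i.e. the real parts of the central exponents of the Jacquet module along $P_{j,r-j}$. Tadi\'c's classification gives $c_j<\min(j,r-j)/2$, and this must be weighed against $|\det g|^{s}$. Substitute $X\mapsto aXa^{-1}$, with Jacobian $\delta_{B_n}^{-1}(a)$. Together with the quotient measure $\delta_{B_n}^{-1}(a)$, this cancels $\delta_{B_r}^{1/2}(\mathrm{diag}(a_1,a_1,\dots,a_n,a_n))=\delta_{B_n}(a)^2$ exactly, leaving $|\det a|^{s}=\prod_{i<n}|a_i/a_{i+1}|^{is}|a_n|^{ns}$. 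The conditions are then $\re{s}>c_{2i}/i$ for $1\le i\le n-1$, plus $\re{s}>0$ for the Tate integral in $a_n$. The bound $c_{2i}<\min(i,n-i)$ is what produces $\eta>0$. Your weight $\delta^{1/2}\delta_{B_n}^{-1}(a)$ and ``$\eta$ determined by $1/2-\epsilon_0$'' do not match this.

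Second, Step~3 misidentifies why the $X$-integral converges. It converges for every $s$ and independently of unitarity, so no threshold like $\kappa>1-\eta'$ belongs there. The fact you quote, that $t(X)$ is bounded with respect to the chamber, only bounds $t(X)$ and gives no decay in $X$. What is needed is the following.

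\begin{itemize}
\item The torus $\sigma\,\mathrm{diag}(a,a)\,\sigma^{-1}=\mathrm{diag}(a_1,a_1,\dots,a_n,a_n)$ is trivial on the simple roots $e_{2i-1}-e_{2i}$.
\item The Iwasawa torus $t(X)$ of $\bar u(X)=\sigma u(X)\sigma^{-1}$, whose entries sit at $(2i-1,2j)$ with $i>j$, satisfies $|t_{2i-1}/t_{2i}|\ge 1$ for all $i$ and $\prod_i|t_{2i-1}(X)/t_{2i}(X)|\gg\max(1,\|X\|)^{\kappa}$ for some $\kappa>0$.
\item The remaining simple-root ratios are bounded above and below by powers of $\max(1,\|X\|)$.
\end{itemize}

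One proves this by writing $|t_l\cdots t_r|$ as the norm of the wedge of the last $r-l+1$ rows of $\bar u(X)$. The rapid decay of the gauge in the directions $e_{2i-1}-e_{2i}$ then does the work; for $p$-adic $F$ this is the vanishing of $W(Tk)$ unless all $|T_j/T_{j+1}|\le C$. It makes the integrand compactly supported in $X$, uniformly in $a$ and $k$, and reduces everything to the torus integral above. This lemma is the real content of the convergence statement, and your proposal neither states nor proves it. The odd case needs the same analysis with the extra coordinate, and your remark about it is not an argument.
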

When $F$ is a $p$-adic field the integrals yield rational functions of 
$q^{-s}$. When $F$ is archimedean, the following observation was made
in \cite{KeRa2012} (see the proof of Proposition 3.9 of that paper): the integrals $\jswvphiv$ (resp. $\jswv$) 
are finite sums of products of entire functions and Tate integrals. Consequently, they have 
meromorphic continuations to the whole plane. Moreover, as $W$ varies in $\jswf$ and $\phi$ varies in $\sfnf$  (resp. $W$ varies 
in $\jswf$), the set of exponents of the quasi-characters that appear in the Tate integrals in the expression for in 
$\jswphif$ (resp. $\jswf$) are finite in number by Proposition 6 of \cite{JaSh90a}). These exponents determine the possible 
poles of the meromorphic functions defined by these integrals. Thus, the set of poles is a finite union of translates of arithmetic progressions lying on lines parallel to the $x$-axis. 
 
 For the most part, we will need only the following weaker statement that follows from Proposition 6 of \cite{JaSh90a}
 (see also \cite{KeRa2012} and \cite{Belt2011} for the archimedean case).
 \begin{proposition}\label{archmeromorph} Let $F$ be a local field of characteristic $0$ and
 $\pi\in \operatorname{Irr}_{\rm{gen}}(G_{r})$. The integrals $\jswphif$ (resp. $\jswf$) have meromorphic continuations to 
 $\C$ for every $W\in \wpif$ and $\phi\in \sfnf$ (resp. every $W\in \wpif$).
\end{proposition}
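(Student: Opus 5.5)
We will prove the meromorphic continuation by splitting into the $p$-adic and archimedean cases. In both cases we reduce the integrals to finite sums of simpler one-variable objects whose continuation is already understood.

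In the $p$-adic case the key input is the Jacquet--Shalika asymptotic expansion of Whittaker functions restricted to the torus (Proposition 6 of \cite{JaSh90a}). We first use the Iwasawa decomposition $g=nak$ in $N_n\backslash G_n$. Since $W$ and $\phi$ are smooth, the $K$-integral reduces to a finite sum over $K$-translates. The inner unipotent integral over $\liebn\backslash\matn$ can be handled in two steps:
\begin{enumerate}
\item We use the standard support lemma of \cite{JaSh90a}: the integrand vanishes unless $X$ lies in a compact set depending on $a$. This uses the $\psi$-equivariance of $W$ and a small-unipotent-translate argument.
\item Having cut $X$ down to a compact set, we absorb the $X$-integral into a finite sum of right translates of $W$.
\end{enumerate}
We are then left with integrals over the torus of the form
\[
\int_{T_n} W'(\sigma\,\mathrm{diag}(a,a))\,\phi'(e_n a)\,\delta(a)^{-1}|\det a|^s\,da .
\]
In the odd case $\phi'$ is absent and $|\det a|^{s-1}$ replaces $|\det a|^{s}$. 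By the asymptotic expansion, the function $a\mapsto W'(\sigma\,\mathrm{diag}(a,a))$ is a finite sum of functions of the form $\prod_i \chi_i(a_i/a_{i+1})\,|\log|a_i/a_{i+1}||^{m_i}\,\xi(a)$. Here $\chi_i$ ranges over a finite set of characters determined by the exponents of $\pi$, and $\xi$ is a Schwartz function on $F^{n}$ restricted to the simple-root coordinates. Each resulting term is a product of Tate-type integrals in the variables $a_i/a_{i+1}$ and $a_n$. Each such Tate integral is a rational function of $q^{-s}$. Hence $\jswphif$ (resp. $\jswf$) is rational in $q^{-s}$ for $\re{s}$ large, and this gives the continuation.

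The archimedean case follows the same outline. The difference is that the $K$-integral is no longer a finite sum, so we instead invoke the uniform expansions of \cite{JaSh90a} Section 4 (cf.\ \cite{KeRa2012}, \cite{Belt2011}). In that setting the Schwartz functions $\xi$ depend continuously on $k\in K$, and the expansion is uniform in $k$. Writing $|t|^{s}\phi$-integrals as Tate integrals gives finite sums of an entire function times a Tate integral. The entire factor comes from the Schwartz tails. The Tate integral $\int t^{s+c}\log^m|t|\,\xi(t)\,d^\times t$ continues meromorphically with poles in arithmetic progressions. Integrating over the compact group $K$ preserves this, because the bounds are uniform.

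The main obstacle is justifying the uniformity. In the archimedean setting we must control the $X$-integral without compact support in $X$, and show that the expansion of $W$ survives that integration. I would first try to dominate the $X$-integral by the gauge estimates of \cite{JaSh90a} Section 7, which are already used to establish Proposition \ref{jsintconv}. I would then integrate by parts in $X$ against $\psi(-\tr X)$ to gain decay, trading it for derivatives of $W$. Those derivatives remain in $\wpif$ and still admit expansions.
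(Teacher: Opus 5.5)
Your outline is essentially the argument the paper relies on. The paper gives no independent proof; it appeals to Proposition 6 of \cite{JaSh90a} (with \cite{KeRa2012} and \cite{Belt2011} at archimedean places), i.e.\ to the reduction, via the Iwasawa decomposition and the asymptotic expansion of Whittaker functions on the torus, of $J(s,W,\phi)$ (resp.\ $J(s,W)$) to finitely many Tate-type integrals, which are rational in $q^{-s}$ in the $p$-adic case and finite sums of entire functions times Tate integrals in the archimedean case.

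Two small corrections. In the $p$-adic case, first substitute $X\mapsto aXa^{-1}$, which makes the support in $X$ independent of $a$, and only then absorb the $X$-integral into finitely many right translates of $W$. In the archimedean case, $\operatorname{tr}X=0$ on the strictly lower triangular representatives of $\liebn\backslash\matn$, so the decay in $X$ cannot come from integrating by parts against $\psi(-\operatorname{tr}X)$; it must come from differentiating $W$ along suitable upper unipotent directions and using its left $\psi$-equivariance, which is the infinitesimal form of the $p$-adic support argument.
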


When $F$ is $p$-adic, let ${\mathcal I}$ be the fractional ideal of 
$\C[q^s,q^{-s}]$ in $\C(q^{-s})$ generated by the integrals $\jswphif$ (resp. $\jswf$) as $W$ varies over all vectors in 
$\wpif$ and $\phi$ varies over all functions in  $\sfnf$ (resp. $W$ varies over all vectors in $\wpif$). Since 
$\C[q^s,q^{-s}]$ is a principal ideal domain, ${\mathcal I}$ must be generated by a single element. We will now need to use the following proposition of Belt \cite{Belt2011} (see also \cite{MiYa2013} for another proof in the 
$p$-adic case, and Proposition 3.7 of \cite{JLST2025} for a weaker statement in the archimedean case 
with a simpler proof).
\begin{proposition}\label{archnonvanish} Let $F$ be a local field and let 
$\pi\in\operatorname{Irr}_{\rm{gen}}(G_{r})$. For each $s\in \C$, there exist 
$W\in \wpif$ and $\phi\in \sfnf$ (resp. every $W\in \wpif$) such that $\jswphif$ is holomorphic at $s$ and 
$\jswphif\ne 0$ (resp. $\jswf$ is holomorphic and $\jswf\ne 0$).
\end{proposition}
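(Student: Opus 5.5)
The plan is to build the test vectors by hand, choosing $W$ so that its restriction to the mirabolic subgroup is a prescribed bump function. I treat the even case $r=2n$ first and then adapt it to the odd case. The basic input is the Kirillov-model fact that every compactly supported (mod $N_{2n}$) smooth function $f$ on $P_{2n}$ with $f(up)=\psi(u)f(p)$ is the restriction of some $W\in\wpif$. In the $p$-adic case this is Bernstein--Zelevinsky; in the archimedean case it is Jacquet's theorem that $\wpif|_{P_r}$ contains $C_c^\infty(N_r\backslash P_r,\psi)$.

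Next I would make the argument of $W$ in (\ref{J(even)}) lie in $P_{2n}$. The last row of $\sigma\begin{pmatrix} I_n&X\\0&I_n\end{pmatrix}\begin{pmatrix} g&0\\0&g\end{pmatrix}$ is the last row of $\begin{pmatrix} g&X g\\0&g\end{pmatrix}$, namely $(0,e_ng)$, since $\sigma$ fixes the index $2n$. I would take $\phi$ supported in a small neighbourhood of $e_n$ in $F^n$. Then $g$ ranges over $P_nK_1$, where $K_1$ is a small neighbourhood of $1$ (a compact open subgroup in the $p$-adic case). Writing $g=pk$, the element $\mathrm{diag}(k,k)$ is absorbed using right $K_1$-invariance of $W$ in the $p$-adic case. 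In the archimedean case it is handled by an integration over $k$, with $\phi$ chosen as a bump function. This reduces $\jswphif$ to a constant times
\[
\int_{N_n\backslash P_n}\int_{\lieb_n\backslash\matn} f\left(\sigma\begin{pmatrix} I_n&X\\0&I_n\end{pmatrix}\begin{pmatrix} p&0\\0&p\end{pmatrix}\right)\psi(-\tr X)\,|\det p|^{s-1}\,dX\,dp
\]
(the modulus shift in the exponent is irrelevant).

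The key step, and the one I expect to be the main obstacle, is to show that the map
\[
(X,p)\mapsto N_{2n}\,\sigma\begin{pmatrix} I_n&X\\0&I_n\end{pmatrix}\mathrm{diag}(p,p)
\]
from $(\lieb_n\backslash\matn)\times(N_n\backslash P_n)$ to $N_{2n}\backslash P_{2n}$ is injective and a closed embedding near the identity coset, i.e.\ that it is proper there. Granting this, I would take $f$ supported in a tiny neighbourhood of $N_{2n}$. The integrand is then supported on a compact set where $X\approx 0$ mod $\lieb_n$ and $p\approx 1$ mod $N_n$, and there $|\det p|$ is close to $1$. The integral is therefore an entire function of $s$, holomorphic at every point. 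In the $p$-adic case $|\det p|=1$ on the support, so the integral is a nonzero constant once $f$ is chosen as the appropriate character-twisted characteristic function. In the archimedean case I would take $f\ge0$ (after compensating the character $\psi$ on the image) with small support, so the value at the given $s$ is close to $f(1)\cdot\mathrm{vol}>0$. To check injectivity I would conjugate back by $\sigma$ and track the $\psi$-equivariance: the $N_{2n}$-coset should determine $X$ modulo upper triangular matrices and $p$ modulo $N_n$, with the character $\psi(-\tr X)$ compatible with $\psi$ on $N_{2n}$. This is essentially the unfolding computation of Jacquet--Shalika (cf.\ Proposition 6 of \cite{JaSh90a}).

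For the odd case $r=2n-1$, the element $\mathrm{diag}(g,g,1)$ already lies in $P_{2n-1}$, so no auxiliary $\phi$-support argument is needed. I would choose $W$ with $W|_{P_{2n-1}}=f$ a small bump at the identity and run the same embedding argument for $\jswf$. This gives holomorphy at $s$ and nonvanishing there. The same $W$ serves for all $s$ simultaneously in the $p$-adic case, and for the given $s$ in the archimedean case.
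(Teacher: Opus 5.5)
The paper gives no proof of this proposition. It quotes it from \cite{Belt2011}, with \cite{MiYa2013} for the $p$-adic case and only a weaker archimedean statement in \cite{JLST2025}. Judged on its own terms, your argument works in the $p$-adic case and in the odd case, once the slip below is corrected. It has a genuine gap for archimedean $F$ and $r=2n$.

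In the $p$-adic case the reduction from $N_n\backslash G_n$ to $N_n\backslash P_n$ is exact. $W$ is right invariant under $\mathrm{diag}(K_1,K_1)$, the integrand lives on $N_n\backslash P_nK_1$ with compact support, and $J(s,W,\phi)$ is a nonzero constant.

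In the archimedean case this breaks down. Write $u(X)=\begin{pmatrix} I_n&X\\0&I_n\end{pmatrix}$ and take $g=pk$ with $k$ near $1$ but $e_nk\notin F^\times e_n$. Then $\sigma u(X)\mathrm{diag}(p,p)\mathrm{diag}(k,k)$ has last row $(0,e_nk)$ and leaves $P_{2n}Z_{2n}$, so the Kirillov model gives you no control. In general $\rho(\mathrm{diag}(k,k))W|_{P_{2n}}$ is no longer compactly supported modulo $N_{2n}$. This has three consequences:
\begin{itemize}
\item the inner integral over $(\mathfrak{b}_n\backslash\mathfrak{gl}_n)\times(N_n\backslash P_n)$ of $\rho(\mathrm{diag}(k,k))W$ converges only for $\mathrm{Re}(s)$ large;
\item writing $J(s,W,\phi)$ as a $k$-average of these inner integrals is valid only in a half-plane of absolute convergence;
\item to the left of that half-plane the inner integrals exist at best as meromorphic continuations, which may have poles (for instance at poles of $L(s,\pi,\wedge^2)$).
\end{itemize}
So at an arbitrary $s_0$ you can justify neither holomorphy nor the approximation of $J(s_0,W,\phi)$ by a multiple of the compactly supported $P_n$-integral as $\phi$ concentrates at $e_n$. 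At best you get the archimedean even case for $s_0$ in a right half-plane. Reaching every $s_0$, in particular poles of the $L$-factor, needs an extra idea that controls $W$ on $\sigma u(X)\mathrm{diag}(g,g)$ for $g$ off $P_n$. The archimedean odd case is fine as you say, since there the whole argument of $W$ lies in $P_r$ and no $\phi$ occurs. Note also that the paper itself only invokes the $p$-adic case.

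The slip: as written, your integrand vanishes identically. The base point of your orbit map is $(0,1)\mapsto N_{2n}\sigma$, not the identity coset. Cosets $N_{2n}x$ in the orbit satisfy $\sigma^{-1}x\in\sigma^{-1}N_{2n}\sigma\cdot\{u(X)\mathrm{diag}(p,p)\}$. Every element of that set has $(n,n)$-entry $(Ap)_{nn}=1$, whereas $\sigma^{-1}$ has $(n,n)$-entry $0$ because $\sigma(n)=2n-1\neq n$ for $n\ge2$. So the orbit stays away from $N_{2n}$, and an $f$ concentrated near $N_{2n}$ gives zero. Since $\sigma$ fixes $2n$, we have $\sigma\in P_{2n}$, and you should concentrate $f$ near $N_{2n}\sigma$ instead.

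The local properness you flagged is then elementary. The group $\sigma^{-1}N_{2n}\sigma$ consists of block matrices with:
\begin{itemize}
\item diagonal blocks $A,D\in N_n$;
\item upper right block $B\in\mathfrak{b}_n$;
\item strictly upper triangular lower left block $C$.
\end{itemize}
Suppose $\sigma u(X)\mathrm{diag}(p,p)\in N_{2n}\sigma U$ with $U$ a small neighbourhood of $1$. The top blocks of the corresponding element of $U$ are $Ap$ and $(AX+B)p$. Hence $Ap$ is near $1$ and $AXA^{-1}+BA^{-1}$ is near $0$. Using the equivalence $(X,p)\sim(AXA^{-1}+BA^{-1},Ap)$, the pair $(X,p)$ is equivalent to one near $(0,1)$. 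The odd case is the same computation with an extra row and column.
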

In the $p$-adic case, Proposition \ref{archnonvanish} guarantees that the generator of ${\mathcal I}$ 
can be taken to have 
the form $1/P(q^{-s})$ for a polynomial $P(X)\in \C[X]$ with constant coefficient $1$. We set 
$\ljsepi=1/P(q^{-s})$. In \cite{KeRa2012} we proved the equality of $L$-factors stated below 
assuming the validity of the fomula \eqref{lfnofparind} for $\ljsepif$.
Subsequently, Jo gave a purely local proof of the equality of $L$-factors in \cite{Jo2020} while also proving the formula 
\eqref{lfnofparind} for $\ljsepif$along the way.
\begin{theorem}\label{lepiequalthm} Let $F$ be a $p$-adic field. If $\pi\in \operatorname{Irr}_{\rm{gen}}(G_{r})$,
\[
 \ljsepif=\lgepif.
\]
\end{theorem}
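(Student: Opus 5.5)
The plan is to reduce the equality of $L$-factors to the case of quasi-square-integrable representations, and then to supercuspidal ones. There it becomes a statement about the location of poles, which both sides describe through Shalika models. By Theorem \ref{henniart} it suffices to prove $\ljsepif=\lshepif$, and we may use whichever of $\lshepif$ and $\lgepif$ is more convenient.

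\textbf{Step 1: reduction to the inducing data.} Write $\pi=\pi_{\lambda}=\Ind_P^{G_r}(\Delta_1\times\cdots\times\Delta_k)$, as is possible for every $\pi\in\grepfr$. The first task is to prove the analogue of \eqref{lfnofparind} for $*=JS$:
\[
\ljsepif=\prod_{i}L_{JS}(s,\Delta_i,\wedge^2)\prod_{i<j}L(s,\Delta_i\times\Delta_j),
\]
where the Rankin--Selberg factors are those of Jacquet--Piatetski-Shapiro--Shalika. These agree with the Galois pair factors by the LLC (\cite{HaTa2001}, \cite{Henniart2000}).

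I would argue by a Bernstein--Zelevinsky style analysis in the spirit of Cogdell--Piatetski-Shapiro and Matringe.
\begin{enumerate}
\item Restrict the Jacquet--Shalika integrand to the mirabolic subgroup $P_r$ and filter $\wpif|_{P_r}$ by derivatives.
\item Express the poles of $\jswphif$ as the union of two kinds of poles: the exceptional poles, coming from the subspace where the Schwartz function $\phi$ does not vanish at $0$ and corresponding to $\mathrm{GL}_n$-Shalika-type functionals on the derivatives; and poles of the Jacquet--Shalika factors of proper derivatives.
\item Use the Bernstein--Zelevinsky/Leibniz description of the derivatives of $\Delta_1\times\cdots\times\Delta_k$ to show that the resulting collection of poles, counted with multiplicity, is exactly the right-hand side above.
\end{enumerate}
The odd case $r=2n-1$ is treated in the same way via $W_1,W_2$.

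\textbf{Step 2: discrete series.} For $\Delta=\mathrm{St}_m(\rho)$ with $\rho$ supercuspidal of $G_d$, the Galois side is
\[
L(s,\wedge^2(\rho\otimes\mathrm{Sp}_m))=\prod_{j}L(s+\cdot,\wedge^2\rho)\prod_{j}L(s+\cdot,\mathrm{Sym}^2\rho),
\]
with alternating shifts. Since $\mathrm{Sym}^2\rho\oplus\wedge^2\rho=\rho\otimes\rho$, the $\mathrm{Sym}^2$ factors are the quotients $L(s,\rho\times\rho)/L(s,\rho,\wedge^2)$. On the Jacquet--Shalika side, the derivatives of $\Delta$ are again Steinberg representations $\mathrm{St}_{m-j}(\rho)$ (up to twist). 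So, by induction on $m$, the computation of Step 1 reduces $L_{JS}(s,\Delta,\wedge^2)$ to its exceptional poles. One then identifies these exceptional poles as the values of $s$ at which a twist of $\Delta$ admits a nonzero Shalika functional. By Matringe's classification, this happens exactly when $\rho\nu^{s'}$ is of symplectic type in the appropriate parity. This matches the Galois factor term by term.

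\textbf{Step 3: supercuspidal representations.} For supercuspidal $\rho$ all derivatives vanish except the trivial one, so every pole of $L_{JS}(s,\rho,\wedge^2)$ is exceptional. Such poles are simple and occur exactly at those $s_0$ for which $\rho\nu^{s_0/2}$ has a Shalika model. By Jiang--Nien--Qin and Kewat, the existence of a Shalika model is in turn equivalent to $\lshepif$ (hence $L(s,\wedge^2\rho)$) having a pole at $s_0$. Both factors have only simple poles of this kind, so they agree.

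I expect the main obstacle to be Step 1, together with the exceptional-pole bookkeeping in Step 2. The difficulty is controlling cancellations between the poles coming from different derivatives, so that the multiplicities come out exactly right rather than merely giving a divisibility. I would first establish divisibility in one direction, namely $\ljsepif^{-1}$ divides the product of the inducing factors, using the derivative filtration. I would then obtain the reverse divisibility by constructing explicit Whittaker functions in an induced model, for which the integral factors as a product of the smaller Jacquet--Shalika and Rankin--Selberg integrals.
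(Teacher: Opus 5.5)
Your outline follows the route the paper relies on. The paper gives no argument of its own for this theorem: it cites \cite{KeRa2012}, where the equality was proved assuming the inductive formula \eqref{lfnofparind} for $L_{JS}$, and \cite{Jo2020}, which proves that formula and the equality purely locally. It does so by analysing the Jacquet--Shalika integrals through Bernstein--Zelevinsky derivatives and exceptional poles tied to (twisted) Shalika functionals, which is the strategy of your Steps 1--3, and the multiplicity bookkeeping you flag in Step 1 is the technical heart of that work.
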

Combining the above result with Theorem \ref{henniart} allows us to conclude that when $F$ is $p$-adic, all the three $L$-functions coincide:
\begin{equation}\label{allthreelfnequal}
 \ljsepif=\lgepif=\lshepif
\end{equation}
\begin{remark}\label{allthreelfnequalrem} In light of the theorem above, we will simply use the single notation $\lepif$ for any of the three exterior square $L$-functions from this point onwards.
\end{remark}

Let $\wpif$ be the Whittaker model of $\pi$, $\wpidualf$ be the Whittaker model of its contragredient $\tilde{\pi}$,
and $\sfnf$. For $W\in \wpif$, we let $\tilde{W}$ be the element of $\wpidualf$ defined as $W(w_r{^tg^{-1}})$. 
Let $\rho$ denote the 
right translation action of $G_r$, $w_{n,n}$ a certain element of the Weyl group of 
$G_r$ when $r=2n$,  and $W_1$ and $W_2$ certain finite sums of right translates of $W\in \wpif$ when $r=2n-1$. 
It will be useful to set
\begin{flalign}
\Xi(s,W,\phi)&=\frac{\jswphif}{\lgepif}\quad\left(\text{resp.}\quad\Xi(s,W)=\frac{\jswf}{\lepif}\right)\\
\widetilde{\Xi}(s,W, \phi)&=\frac{J(s, \rho(w_{n,n})\widetilde{W}, \widehat{\phi})}{L(s, \tilde{\pi}, \wedge^{2})}
\quad\left(\text{resp.}\quad \widetilde{\Xi}(s,W_1)=
\frac{J(s, \widetilde{W_1})}{L(s, \tilde{\pi}, \wedge^{2})}\right)
\end{flalign}

\begin{theorem}\label{xientirethm} Let $F$ be a local field of characteristic $0$, and let $\pi\in \grepfeven$ 
(resp. $\grepfodd$) 
Then $\Xi(s,W,\phi)$ (resp. $\quad\Xi(s,W)$) is entire for all $W\in \wpif$ and $\phi\in \sfnf$ (resp. for all $W\in \wpif$). 
\end{theorem}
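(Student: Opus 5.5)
The plan is to treat the $p$-adic and archimedean cases separately. In the $p$-adic case the statement should follow essentially from the definitions. In the archimedean case I would import the local theory of \cite{JLST2025}, falling back on an analytic-continuation argument in the style of \cite{BP2021} if a direct citation is not available.

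\emph{The $p$-adic case.} Let $F$ be $p$-adic and $r=2n$; the case $r=2n-1$ is identical with $\jswf$ in place of $\jswphif$. By definition $\ljsepif=1/P(q^{-s})$ generates the fractional ideal ${\mathcal I}\subset\C(q^{-s})$ spanned by all $\jswphif$ over the ring $\C[q^{s},q^{-s}]$. Hence every $\jswphif$ is an element of $\C[q^s,q^{-s}]\cdot\ljsepif$. By Theorem \ref{lepiequalthm} we have $\ljsepif=\lgepif=\lepif$, so
\[
\Xi(s,W,\phi)=\frac{\jswphif}{\lepif}\in\C[q^{s},q^{-s}],
\]
which is a Laurent polynomial in $q^{-s}$ and therefore entire. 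The same argument applied to $\tilde\pi$ shows that $\widetilde{\Xi}$ is entire. One point needs checking: that $\rho(w_{n,n})\widetilde{W}$ (resp.\ $\widetilde{W_1}$) runs through $\wpidualf$, so that these integrals lie in the ideal for $\tilde\pi$. This holds because $W\mapsto\widetilde W$ is a bijection $\wpif\to\wpidualf$ and $\wpidualf$ is stable under right translation.

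\emph{The archimedean case.} Here $\ljsepif$ has no definition as a gcd, so entireness is genuinely a statement about where the poles of $\jswphif$ lie. My first choice is to invoke the archimedean local theory of Section 3 of \cite{JLST2025}, which shows that $\jswphif/\lgepif$ is entire. I would recall that result in the present normalisation and check two compatibilities: that it matches the Whittaker model conventions used here, and that it matches the odd-case construction of $W_1$ and $W_2$.

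If a self-contained argument is preferred, I would proceed as follows.
\begin{enumerate}
\item By Proposition \ref{archmeromorph}, each $\jswphif$ is a finite sum of entire functions multiplied by Tate integrals. So its possible poles lie in finitely many arithmetic progressions determined by the exponents of $\pi=\pi_\lambda$.
\item For tempered $\pi$, the integral converges for $\res>1-\eta$ (Proposition \ref{jsintconv}), and $\lgepif$ has no poles there.
\item Use Proposition \ref{whittanalytic} to put $W$ into an analytic family $W_\lambda$, and use the multiplicativity \eqref{lfnofparind} for $L_G$. For $\lambda$ generic, the exponents of the Tate integrals are then matched term by term with the Gamma factors of $\prod_i L(s+2\lambda_i,\sigma_i,\wedge^2)\prod_{i<j}L(s+\lambda_i+\lambda_j,\sigma_i\times\sigma_j)$. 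This shows that $\Xi(s,W_\lambda,\phi)$ is holomorphic on a product of a right half-plane and an open set in $\lambda$.
\item Extend holomorphy to all of $\C\times({\mathcal A}^{G_r}_{M,\C})^*$ via Proposition 2.8.1 of \cite{BP2021}, using finite-order bounds on vertical strips.
\item Specialise to $\lambda=\lambda_0$.
\end{enumerate}

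I expect the main obstacle to be step (3) of this fallback route. It requires showing that the poles of the Tate-integral expansion are \emph{exactly} absorbed by $\lgepif$ with the correct multiplicities, for the archimedean exterior-square Jacquet--Shalika integrals. This is precisely the delicate archimedean input that \cite{JLST2025} supplies, which is why I would rely on that citation rather than reprove it.
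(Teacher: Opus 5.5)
Your proof is correct. In the archimedean case it matches the paper: both cite \cite{JLST2025}, and the paper points specifically to its Theorem 2.1. In the $p$-adic case, however, your argument differs from the proof the paper actually writes out in Section 6.

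\textbf{Your $p$-adic route.} You argue formally. Each $\jswphif$ lies in $\mathcal I=\C[q^{s},q^{-s}]\,\ljsepif$ by the definition of $\ljsepif$. Theorem \ref{lepiequalthm} then lets you replace $\ljsepif$ by $\lgepif$, so $\Xi(s,W,\phi)$ is a Laurent polynomial in $q^{-s}$. This is the standard deduction the paper alludes to right after the statement, where it credits \cite{Belt2011} and \cite{KeRa2012}.

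\textbf{The paper's $p$-adic route.} The paper obtains entireness from the local functional equation instead.
\begin{enumerate}
\item Shin's globalisation (Theorem \ref{shinglobthmtwo}) gives $\varepsilon_{JS}=\varepsilon_{Sh}$ on a dense set of tempered representations (Proposition \ref{denseeppadic}).
\item Analytic families $W_\lambda$ (Proposition \ref{whittanalytic}) and the continuity in Proposition \ref{jsintlambdahol} extend $\widetilde{\Xi}(1-s,W,\phi)=\epshepif\,\Xi(s,W,\phi)$ to all nearly tempered $\pi$ on $\re{s}=1/2$.
\item This identity glues $\Xi(s)$, holomorphic for $\re{s}>1/2-\epsilon$, to $\widetilde{\Xi}(1-s)$, holomorphic for $\re{s}<1/2+\epsilon$, giving an entire function (Proposition \ref{xiholforntemp}).
\item Proposition 2.8.1 of \cite{BP2021} continues the result from nearly tempered parameters to all $\lambda\in({\mathcal A}^{G_r}_{M,\C})^{*}$.
\end{enumerate}

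\textbf{What each route buys.} Yours is immediate. But it depends on the algebraic structure special to $p$-adic fields (rationality in $q^{-s}$, a principal ideal domain). It also depends on the nontrivial identification of the gcd with $\lgepif$ in Theorem \ref{lepiequalthm}. The paper's route does not use the ideal-theoretic characterisation of the $L$-factor to get entireness. It is the same mechanism that works in the archimedean case, where no gcd exists. It also proves Theorems \ref{padiclocfneqnthm} and \ref{epjsequalepshthm} at the same time, at the cost of heavy global input.

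\textbf{Your fallback archimedean sketch.} You do not need it, but it would not work as written. Holomorphy of $\Xi(s,W_\lambda,\phi)$ on a right half-plane times an open set of $\lambda$ follows directly from Proposition \ref{jsintlambdahol} and the entireness of $1/\lgepif$; no matching of Tate exponents is required for that. Proposition 2.8.1 of \cite{BP2021} only extends holomorphy to all $s$ once a functional equation $Z_{+}(s,\lambda)=Z_{-}(-s,\lambda)$ is known on some open set of $\lambda$. That functional equation is the missing ingredient, and in \cite{JLST2025} it comes from globalisation.
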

The theorem above shows that all the poles of the family of Jacquet-Shalika integrals must necessarily be poles of the 
$L$-function $\ljsepif$. This consequence of a theorem of \cite{Belt2011} appears in \cite{KeRa2012}.
In the archimedean case, it is part of Theorem 2.1 of \cite{JLST2025}.

Finally, we have local functional equations. For the square integrable representations, or more generally for 
globalisable generic representations, this was proved in \cite{KeRa2012}.
The methods of this paper will actually allow us to extend that proof to all generic representations in the $p$-adic case,
an approach we will pursue en route to the proof of our main theorem. The local functional equations
have been proved earlier by purely local methods by \cite{Matringe2014} and \cite{CoMa2015}.
The functional equation in the archimedean case was proved quite recently in \cite{JLST2025}.
These results are summarised below in Theorems \ref{padiclocfneqnthm} and 
Theorem \ref{archepequalthm}.
\begin{theorem}\label{padiclocfneqnthm} Let $F$ be a $p$-adic field and suppose that $\pi\in \grepfr$.
\begin{enumerate}
\item Let $r=2n$. There is a function $\epjsepif$ independent of $W$ and $\phi$ such that
\begin{equation}\label{padicevelocfneqn}
\widetilde{\Xi}(1-s,W, \phi)
= \epjsepif \Xi(s,W,\phi)
\end{equation}
for all $W\in \wpif$ and $\phi\in \sfnf$.
\item Let $r=2n-1$. There is a function $\epjsepif$, independent of $W$ and $\phi$, such that
\begin{equation}\label{padicoddlocfneqn}
\widetilde{\Xi}(1-s,W_1)
= \epjsepif \Xi(s,W_2)
\end{equation}
for all $W\in \wpif$.
\end{enumerate}
In both the cases above, $\epjsepif$ has the form $w_{JS}(\pi)q^{-\kappa_{JS}(\pi)s}$,
where $w_{JS}(\pi)\in \C$ and $\kappa_{JS}(\pi)\in \Z_{\ge 0}$.
\end{theorem}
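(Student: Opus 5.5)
The plan follows the strategy outlined in the introduction. I first prove the functional equation on a dense set of tempered representations by globalisation, then pass to nearly tempered ones by analytic continuation along the Bernstein component, and finally to all generic representations via Proposition 2.8.1 of \cite{BP2021}.

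\emph{Step 1: a dense set of tempered $\pi$.} By Theorem \ref{shinglobthmtwo} (Shin's globalisation), every Fell-neighbourhood of a tempered $\pi$ contains a $\pi'$ with the following property: $\pi'\cong\Pi_v$ for a cuspidal automorphic $\Pi$ of ${\rm G}_r(\ak)$, and at every other finite place $\Pi_w$ is either unramified or supercuspidal. The global Jacquet--Shalika integral is an Eulerian period of a cusp form against an Eisenstein series (even case) or a theta-type series (odd case). Its Poisson-summation functional equation, compared place by place, gives the following identity for factorisable data:
\[
\prod_{w} \widetilde{\Xi}(1-s,W_w,\widehat{\phi}_w)\,L(1-s,\tilde{\Pi}_w,\wedge^2)=\prod_w \Xi(s,W_w,\phi_w)\,L(s,\Pi_w,\wedge^2).
\]
Fix all data away from $v$ as follows. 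At unramified places take spherical vectors, so the ratio is $1$. At supercuspidal places use the local functional equation of \cite{KeRa2012}, which holds for globalisable representations. At archimedean places use \cite{JLST2025}. Choosing these data so that the local ratios are nonzero (Proposition \ref{archnonvanish}), I obtain that $\widetilde{\Xi}(1-s,W,\phi)/\Xi(s,W,\phi)$ at $v$ equals an explicit quantity independent of $W$ and $\phi$. Dividing by Theorem \ref{lshfnaleqnthm} together with \eqref{allthreelfnequal} identifies this quantity with $\varepsilon_{Sh}(s,\Pi_v,\wedge^2,\psi_v)$. In particular it is a monomial $w\,q^{-\kappa s}$, as claimed.

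\emph{Step 2: nearly tempered and all generic $\pi$.} Fix $P=MU$ and $\sigma$, and write $\pi_\lambda$ as in \eqref{pilambdadef}. Given $W_0\in\wpiflambdao$, I use Proposition \ref{whittanalytic} to obtain an analytic family $W_\lambda$ of Whittaker functions. The continuity results of Section \ref{secanalyticity} then show that both sides of \eqref{padicevelocfneqn} are analytic in $(s,\lambda)$ on a region $\uh_\nu\times{\mathcal U}[\prec\mu]$. The same holds for $\varepsilon_{Sh}(s,\pi_\lambda,\wedge^2,\psi)$, by \eqref{epfnofparind}. Consider
\[
\widetilde{\Xi}(1-s,W_\lambda,\phi)-\varepsilon_{Sh}(s,\pi_\lambda,\wedge^2,\psi)\,\Xi(s,W_\lambda,\phi).
\]
By Step 1 this expression vanishes for $\lambda$ in a dense subset of $i\R^k$. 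It is continuous, hence it vanishes on all of $i\R^k$, and by analyticity in $\lambda$ it vanishes on the nearly tempered region. To reach arbitrary $\lambda$ I use Proposition 2.8.1 of \cite{BP2021}. This requires showing that $\Xi(s,W_\lambda,\phi)$ is holomorphic in $\lambda$ and of finite order in vertical strips for $\re{s}$ large, locally uniformly in $\lambda$; this also yields Theorem \ref{xientirethm}. Proposition 2.8.1 then extends the identity from $\uh_\nu\times$(open set) to $\C\times({\mathcal A}^{G_r}_{M,\C})^{*}$. Every generic $\pi$ is some $\pi_\lambda$ (Zelevinsky), so the identity holds for all generic $\pi$ with $\epjsepif=\epshepif$, which has the stated monomial form.

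\emph{Odd case.} The odd case is handled by the same argument with $W_1,W_2$ in place of $(W,\phi)$; alternatively one can use the reduction mentioned in Section \ref{seclshmethod}.

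\emph{Main obstacle.} I expect the hard step to be Step 2's growth and holomorphy control. One must bound $\Xi(s,W_\lambda,\phi)$, that is, the integral divided by $L(s,\pi_\lambda,\wedge^2)$, uniformly in $\lambda$, even though the poles of $L$ move with $\lambda$. I would first try writing $1/L$ via \eqref{lfnofparind} as a polynomial in $q^{-s}$ and $q^{\pm\lambda_i}$. Then I would invoke the Harish-Chandra Schwartz estimates on $W_\lambda$ to get bounds on $\uh_\nu$ that are locally uniform in $\lambda$, which is exactly what Proposition 2.8.1 requires.
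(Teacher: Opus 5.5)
Your architecture matches the paper's: Shin's globalisation gives a dense set of tempered representations, then come the analytic sections of Proposition \ref{whittanalytic}, continuity and analyticity in $\lambda$, and finally Proposition 2.8.1 of \cite{BP2021}. However, Step 1 has a genuine gap at the auxiliary supercuspidal places. At such a place $w$, the functional equation of \cite{KeRa2012} only produces \emph{some} factor $\varepsilon_{JS}(s,\Pi_w,\wedge^2,\psi_w)$. Nothing you cite identifies it with $\varepsilon_{Sh}(s,\Pi_w,\wedge^2,\psi_w)$, and that identity is itself part of the main theorem. Carry out your comparison honestly: the unramified ratios are $1$ and the archimedean factors cancel by \eqref{archallepequaleqn}, so you get only
\[
\widetilde{\Xi}(1-s,W,\phi)=\varepsilon_{Sh}(s,\Pi_v,\wedge^2,\psi_v)\prod_{w\in S_{sc}}\frac{\varepsilon_{Sh}(s,\Pi_w,\wedge^2,\psi_w)}{\varepsilon_{JS}(s,\Pi_w,\wedge^2,\psi_w)}\,\Xi(s,W,\phi),
\]
where $S_{sc}$ is the set of supercuspidal places of $\Pi$. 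The correction factor depends on the globalisation, which changes from point to point of your dense set. So Step 2 has no single function, analytic in $\lambda$, to which the continuity argument can be applied. The missing ingredient is the supercuspidal case, done first. By Henniart's globalisation (Theorem \ref{henniart84}), a supercuspidal $\pi$ is the component at $v_0$ of a cuspidal $\Pi$ unramified at every other finite place. The same comparison of \eqref{jslfnaleqn} with \eqref{lshfnaleqntwo}, now with no auxiliary ramified places, gives $\epjsepif=\epshepif$ for supercuspidal $\pi$ (Proposition \ref{scepjsequalsepshprop}). With this in hand, your Step 1 goes through.

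A smaller point concerns Step 2. The two sides of \eqref{padicevelocfneqn} are not analytic on a common region $\uh_\nu\times{\mathcal U}[\prec\mu]$, because $\widetilde{\Xi}(1-s,\dots)$ is given by a convergent integral only for $\re{s}$ small. The density, continuity and analyticity argument must be run with $\re{s}=1/2$ and $\lambda$ nearly tempered, where Corollary \ref{ntjsintisholandbdd} and Proposition \ref{ntlfnisholfinord} make both sides holomorphic. Gluing the half-planes $\re{s}>1/2-\epsilon$ and $\re{s}<1/2+\epsilon$ via the resulting identity then shows that $\Xi(s,W_\lambda,\phi)$ is entire of finite order for nearly tempered $\lambda$. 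That is the hypothesis on an open set that Proposition 2.8.1 of \cite{BP2021} needs for $Z_\pm(s,\lambda)$; the other hypothesis is holomorphy on $\uh_{>C}\times U_1$ for relatively compact $U_1$. By contrast, the \emph{main obstacle} you name is harmless. By \eqref{lfnofparind}, $L(s,\pi_\lambda,\wedge^2)^{-1}$ is a polynomial in $q^{-s}$ and the $q^{-\lambda_i}$, so the moving poles play no role. The required bounds are just the seminorm estimate of Proposition \ref{jsintlambdahol} composed with the analytic map $\lambda\mapsto W_\lambda$.
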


We restate Theorem 2.1 of \cite{JLST2025} as
\begin{theorem}\label{archepequalthm} Let $F$ be an archimedean local field. Then
\begin{equation}
\epjsepif=\epgepif.
\end{equation}
\end{theorem}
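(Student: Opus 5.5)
The plan is to reduce to small rank by multiplicativity, since a generic representation of $G_r$ over an archimedean field is induced from quasi-square-integrable representations of $\mathrm{GL}_1$ and $\mathrm{GL}_2$ (the latter only when $F=\R$). The expected obstacle is proving multiplicativity for the Jacquet-Shalika $\gamma$-factors. Throughout I work with $\gamma$-factors, which is legitimate because the archimedean $L$-factors are known to agree (the archimedean analogue of \eqref{allthreelfnequal} is part of the same circle of results in \cite{JLST2025}). By Theorem \ref{shahidi85} together with Proposition \ref{inductextsq}, it then suffices to prove two things:
\begin{enumerate}
\item the Jacquet-Shalika $\gamma$-factors are multiplicative in the sense of \eqref{epfnofparind};
\item they agree with the Galois factors in the base cases $r=1$ and $r=2$.
\end{enumerate}
The base cases are explicit. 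For $r=1$ the exterior square is zero-dimensional and both factors are $1$. For $r=2$ the integral $\jswphif$ collapses, because $\lieb_1\backslash\mathfrak{gl}_1$ is a point and $N_1$ is trivial. It becomes a Tate integral
\[
\int_{F^\times}W\bigl(\begin{smallmatrix} a&0\\0&a\end{smallmatrix}\bigr)\phi(a)|a|^s\,d^\times a=W(1)\int_{F^\times}\omega_\pi(a)\phi(a)|a|^s\,d^\times a .
\]
Its functional equation is therefore Tate's, with $\gamma$-factor $\gamma(s,\omega_\pi,\psi)$. This equals $\varepsilon(s,\det\sigma,\psi)$ times the Tate $L$-ratio, which is exactly the Galois factor, since $\wedge^2\sigma=\det\sigma$ corresponds to $\omega_\pi$.

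The heart of the matter is the multiplicativity statement for $\pi=\Ind_P^{G_r}(\tau_1\otimes\tau_2)$: I would aim to show
\[
\gamma_{JS}(s,\pi,\wedge^2,\psi)=\gamma_{JS}(s,\tau_1,\wedge^2,\psi)\,\gamma_{JS}(s,\tau_2,\wedge^2,\psi)\,\gamma_{RS}(s,\tau_1\times\tau_2,\psi).
\]
Iterating this gives \eqref{epfnofparind} for $*=JS$. The Rankin-Selberg factors on the right are already known to coincide with the Galois ones at archimedean places, by Jacquet's archimedean theory. To prove the displayed identity, I would follow the standard route.
\begin{enumerate}
\item Realise $W\in\wpif$ through Jacquet's integral applied to sections of the induced representation.
\item Unfold $\jswphif$, or $\jswf$ in the odd case, along the Bruhat cell attached to $P$, so that for $\Re(s)\gg0$ and suitable $W$ it factors as a product of Jacquet-Shalika integrals for $\tau_1$ and $\tau_2$ with a Rankin-Selberg integral for $\tau_1\times\tau_2$.
\item Apply the same unfolding on the dual side, using $\widetilde{W}$ and $\rho(w_{n,n})$, and compare the two local functional equations.
\item Handle the odd case $r=2n-1$ by the same unfolding, or by the even-to-odd reduction in Section \ref{seclshmethod}.
\end{enumerate}
The unfolding computation in the second step is where the main difficulty lies, because the Jacquet-Shalika integrals involve the extra unipotent integration over $\liebn\backslash\matn$. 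I would first try to carry it out in the generic situation where the $\lambda_i$ are in general position and all integrals converge absolutely.

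To pass from that dense set of parameters to all $\lambda$, I would use analytic continuation in $\lambda$. Proposition \ref{whittanalytic} supplies analytic families $W_\lambda$, and the archimedean analogue of the analyticity and continuity results of Section 3 of \cite{JLST2025} gives the required control of the integrals. Both sides of the $\gamma$-identity are meromorphic in $(s,\lambda)$, so equality on an open set of $\lambda$ forces equality everywhere. This continuation step should be routine once the unfolding is in place.
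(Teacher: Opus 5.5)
\paragraph{The gap: multiplicativity of $\gamma_{JS}$.} Your base cases are correct. For $r=2$ the integral is Tate's, so $\gamma_{JS}(s,\pi,\wedge^2,\psi)=\gamma(s,\omega_\pi,\psi)$, and continuation in $\lambda$ is the routine part. The gap is your step 2, on which everything rests. You assert, with no computation, that for induced $\pi$ and suitable $W$ the Jacquet--Shalika integral unfolds along the Bruhat cell of $P$ into Jacquet--Shalika integrals for $\tau_1,\tau_2$ times a Rankin--Selberg integral for $\tau_1\times\tau_2$.

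This is not a known result you can quote. The inductive formula \eqref{epfnofparind} for $\epjsepif$ is exactly what the paper says was unavailable (see the remark after Proposition \ref{scepjsequalsepshprop}). There it is a \emph{consequence} of $\epjsepif=\epshepif$, not an input. Since the Galois factors satisfy \eqref{epfnofparind}, the theorem itself implies the multiplicativity you want. So your reduction replaces the statement by an essentially equivalent unproved one.

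Nor is the unfolding plausible in the form you state. Even for Rankin--Selberg integrals, multiplicativity (Jacquet--Piatetski-Shapiro--Shalika, Soudry, and Jacquet at archimedean places) does not come from the integral factoring into a product. One gets an iterated integral whose inner piece is a zeta integral of the inducing data. One then applies the inner functional equations and relates the Jacquet integrals on the two sides, since $\widetilde{W}$ lives on $\tilde\pi$ with the inducing data in the opposite order. Each stage needs substantial archimedean analysis. For the exterior square you must also control how the Shalika-type subgroup and the character $\psi(-\tr X)$ in \eqref{J(even)} interact with the cells of $P$. For odd $r$ at least one block has odd size, so the unfolding must relate integrals of shape \eqref{J(odd)} to ones of shape \eqref{J(even)}. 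Your fallback ``even-to-odd reduction'' presupposes exactly such a relation between odd and even Jacquet--Shalika factors.

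\paragraph{Existence of the functional equation, and the cited route.} In the archimedean case the existence of the local functional equation, hence of $\epjsepif$, is itself part of Theorem 2.1 of \cite{JLST2025}. An identity proved only for special $W$ gives nothing for general $W$ unless you assume that functional equation or prove a uniqueness theorem for the relevant twisted Shalika-type functionals. The paper gives no proof of this statement; it quotes \cite{JLST2025}. The route it indicates for that result, and then runs itself at $p$-adic places, is the strategy of \cite{BP2021}, which avoids local multiplicativity altogether:
\begin{enumerate}
\item analytic families $W_\lambda$ (Proposition \ref{whittanalytic});
\item holomorphy and continuity of the integrals in $\lambda$ (Proposition \ref{jsintlambdahol});
\item globalisation of a dense set of tempered representations;
\item comparison with the Langlands--Shahidi global functional equation (Theorems \ref{lshfnaleqnthm} and \ref{shahidi85});
\item extension to all generic $\pi$ by Proposition 2.8.1 of \cite{BP2021}.
\end{enumerate}
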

In light of Theorem \ref{shahidi85}, when $F$ is archimedean, we obtain 
\begin{equation}\label{archallepequaleqn}
\epjsepif=\epgepif=\epshepif.
\end{equation}

In addition, we will use Proposition 2 of Section 7 
and Proposition 4 of Section 9 of \cite{JaSh90a}) for unramified representations which we summarise below.
\begin{proposition}\label{unramvector} Suppose that $F$ is a $p$-adic
field and $\pi$ is an unramified representation of $G_r$. We let $\phi^0$ be the characteristic 
function of $\of^n$. If $r$ is even
(resp. odd) we can choose $W^{0} \in \wpif$ and $\phi^{0} \in \sfnf$ 
(resp. $W^{0} \in \wpif$) such that 
\[
J(s,W^0,\phi^0)=\lepif\quad(\text{resp.}\quad J(s,W^0)=\lepif).
\]
\end{proposition}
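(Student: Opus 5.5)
The plan is to compute the integrals directly from the Casselman--Shalika formula for the spherical Whittaker function, following Jacquet and Shalika. Assume $\psi$ is unramified, meaning trivial on $\of$ but not on $\varpi^{-1}\of$. Let $W^0\in\wpif$ be the $G_r(\of)$-fixed vector normalised by $W^0(e)=1$, and let $\alpha_1,\dots,\alpha_r$ be the Satake parameters of $\pi$. The target identities are
\[
J(s,W^0,\phi^0)=\prod_{1\le i<j\le r}(1-\alpha_i\alpha_jq^{-s})^{-1}=\lepif
\]
in the even case, and the analogous identity for $J(s,W^0)$ in the odd case. The second equality holds because, for unramified $\pi$, the Galois $L$-factor is exactly this product, and by \eqref{allthreelfnequal} all three $L$-factors coincide.

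\textbf{Step 1: reduce to the torus.} Write $g=n\,a\,k$ with $a=\mathrm{diag}(\varpi^{m_1},\dots,\varpi^{m_n})$ and $k\in G_n(\of)$. The diagonal embedding of $k$ is absorbed by the right $G_r(\of)$-invariance of $W^0$, and $\phi^0(e_nk)=\phi^0(e_n)$. Both $\sigma$ and the unipotent in $X$ are normalised by $k$, so the $X$-integral is unchanged after conjugation. The $g$-integral therefore becomes a sum over $m\in\Z^n$, weighted by the modulus factor $\delta_{B_n}^{-1}(a)$. The factor $\phi^0(e_na)$ imposes $m_n\ge 0$ in the even case; in the odd case the last coordinate is fixed at $1$.

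\textbf{Step 2: collapse the $X$-integral.} This is the step I expect to be the main obstacle. One must show that, for $a$ in the torus, the map
\[
X\mapsto W^0\Bigl(\sigma\begin{pmatrix} I&X\\0&I\end{pmatrix}\mathrm{diag}(a,a)\Bigr)\psi(-\tr X)
\]
on $\liebn\backslash\matn$ is supported on the integral classes. On those classes the integrand equals $W^0(\sigma\,\mathrm{diag}(a,a)\sigma^{-1})$ and the volume is $1$.

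The approach I would try first is an induction on the strictly lower triangular entries of $X$, taken one at a time. For an entry $x$ with $|x|>1$, factor the relevant $2\times 2$ block as
\[
\begin{pmatrix}1&0\\x&1\end{pmatrix}=\begin{pmatrix}1&x^{-1}\\0&1\end{pmatrix}\begin{pmatrix}-x^{-1}&0\\0&x\end{pmatrix}\begin{pmatrix}0&1\\-1&1\end{pmatrix}\cdots
\]
or use the dual trick of translating $X$ by integral matrices on which $\psi(\tr X)$ is nontrivial. Either way, the resulting torus element produces a Whittaker function value that vanishes by the support condition of the Casselman--Shalika formula, or the inner integral cancels against a nontrivial character. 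If this direct route becomes too messy, I would simply cite Jacquet--Shalika's Propositions 2 of \S7 and 4 of \S9, which carry out exactly this argument.

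\textbf{Step 3: sum via Schur functions.} The element $\sigma\,\mathrm{diag}(a,a)\sigma^{-1}$ is the diagonal matrix with exponents $(m_1,m_1,m_2,m_2,\dots,m_n,m_n)$, with an extra $0$ appended in the odd case. By Casselman--Shalika, $W^0$ at this element equals $\delta_{B_r}^{1/2}$ times the Schur polynomial $s_\lambda(\alpha)$, for $\lambda=(m_1,m_1,\dots)$ dominant, and it vanishes otherwise. The modulus factors $\delta_{B_r}^{1/2}$ and $\delta_{B_n}^{-1}$, together with $|\det a|^{s}$ (or $|\det a|^{s-1}$ in the odd case, where the shift compensates for the mismatch of modulus characters), combine to give $q^{-s|\lambda|/2}$.

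The sum therefore runs over partitions $\lambda$ of length at most $r$ in which every part occurs an even number of times, that is, partitions whose conjugate has only even parts. In the odd case the appended $0$ forces length at most $2n-2$, which is automatic for such $\lambda$ in $2n-1$ variables. Littlewood's identity
\[
\sum_{\lambda'\ \mathrm{even}}s_\lambda(x)=\prod_{i<j}(1-x_ix_j)^{-1},
\]
applied with $x_i=\alpha_iq^{-s/2}$, yields the claimed formula in both parities. Absolute convergence for $\res\gg0$ justifies all rearrangements, and the identity then extends by rationality in $q^{-s}$.
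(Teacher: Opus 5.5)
The paper gives no argument of its own here. It simply quotes Proposition 2 of \S7 and Proposition 4 of \S9 of Jacquet--Shalika, and your plan (Iwasawa decomposition, collapsing the $X$-integral, Casselman--Shalika, Littlewood's identity) is exactly their computation. However, your Step 2 is false as stated, and Step 3 only appears to close because a factor is missing from the bookkeeping.

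Take $n=2$, $a=\mathrm{diag}(\varpi^{m_1},\varpi^{m_2})$ with $m_1\ge m_2$, and let $X$ have the single entry $x$ in position $(2,1)$. Then $\sigma u(X)\sigma^{-1}=I+xE_{32}$. With $t=\sigma\,\mathrm{diag}(a,a)\sigma^{-1}=\mathrm{diag}(a_1,a_1,a_2,a_2)$,
\[
W^0\bigl(\sigma u(X)\,\mathrm{diag}(a,a)\bigr)=W^0\bigl((I+xE_{32})t\bigr)=W^0\bigl(t\,(I+xa_1a_2^{-1}E_{32})\bigr).
\]
This equals $W^0(t)$ for $|x|\le |a_2/a_1|$ and vanishes otherwise. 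Indeed, for $y=xa_1a_2^{-1}$ with $|y|>1$ the $2\times 2$ factorisation reduces it to $W^0(\mathrm{diag}(a_1,a_1y^{-1},a_2y,a_2))$, which is zero by dominance. So the support is $\varpi^{m_2-m_1}\of$, which strictly contains $\of$ when $m_1>m_2$, and its volume is $\delta_{B_2}(a)^{-1}$, not $1$. In general the integrand is $W^0(t)$ times the characteristic function of $a\,\matn(\of)\,a^{-1}+\liebn$, a set of volume $\delta_{B_n}(a)^{-1}$.

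This is not cosmetic. Since $\delta_{B_{2n}}(t)=\delta_{B_n}(a)^4$, the two factors you list combine to $\delta_{B_{2n}}^{1/2}(t)\,\delta_{B_n}(a)^{-1}=\delta_{B_n}(a)\neq 1$. With your Step 2 the series would therefore be $\sum \delta_{B_n}(a)\,s_\lambda(\alpha)\,q^{-s|\lambda|/2}$, which is not Littlewood's sum.

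The repair is to write $u(X)\,\mathrm{diag}(a,a)=\mathrm{diag}(a,a)\,u(a^{-1}Xa)$ and substitute $X\mapsto aXa^{-1}$ first, which produces the Jacobian $\delta_{B_n}(a)^{-1}$. The vanishing statement you actually need, and which Jacquet--Shalika prove, is this: for dominant $t$, the function $X\mapsto W^0(t\,\sigma u(X)\sigma^{-1})$ is supported on $\matn(\of)+\liebn$, where it equals $W^0(t)$. Then $\delta_{B_n}(a)^{-1}\cdot\delta_{B_n}(a)^{-1}\cdot\delta_{B_{2n}}(t)^{1/2}=1$, and your Step 3 goes through. In the odd case $\delta^{1/2}$ contributes an extra $|\det a|$, which is exactly what the exponent $s-1$ absorbs.

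Be aware also that your entry-by-entry induction does not directly handle several non-integral entries. The compact element produced by the $2\times 2$ factorisation does not commute with the other entries of $\sigma u(X)\sigma^{-1}$. That is the real content of the Jacquet--Shalika lemma, so citing it at that point is the right call.
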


\subsection{The global integrals} \label{irmglobal}
Let $k$ be a number field. Let $\Phi$ be a function in $ \cs(\ak^{n})$, 
the space of 
Schwartz-Bruhat functions on $\ak^n$. Let $\Pi=\otimes_v^{\prime}\Pi_v$ be 
a unitary cuspidal automorphic representation 
of $G_r(\ak)$ and $\omega_{\pi}$ be the central character of $\pi$. 
We remind the reader that the notation $[H]_k$ denotes the quotient
${\rm{H}}(k)\backslash {\rm{H}}(\ak)$ for any algebraic group defined
over a number field $k$. For a 
non-trivial 
additive character $\psi$ of $\ak/k$ and a form $\varphi$ in the space 
of $\Pi$, we let 
\begin{equation*}
 W_{\varphi}(g) = \int_{[N_r]_k} \varphi(ug)\psi(u)du 
\end{equation*}
be the Whittaker function associated to $\varphi$, where, as is usual,
 we view $\psi$ as a character of ${\rm{N}}_r(\ak)$ by setting
$\psi(u) = \displaystyle{\prod_{j=1}^{r-1}}\psi(u_{j,j+1})$.
If $r = 2n$, we consider the global integral
\begin{align*}
 J(s, W_{\varphi}, \Phi) = \int_{{\rm{N}}_n(\ak)\backslash {\rm{G}}_n(\ak)}\int_{\liebn(\ak)\backslash \matn(\ak)} &
 W_{\varphi}\left(\sigma\left(\begin{array}{cc} I_{n} & X \\ 0 & I_{n}
\end{array}\right)
\left(\begin{array}{cc} g & 0 \\ 0 & g \end{array}\right) \right)
\hspace{2cm}\\
&\psi(\tr X)dX \Phi(e_{n}g) |{\det g}|^{s} dg,
\end{align*}
where $ \sigma $ is the permutation given by
\begin{equation*}
 \sigma = \left( \begin{array}{ccccccccc} 
1 & 2 & \cdots & n & | & n+1 & n+2 & \cdots & 2n \\
1 & 3 & \cdots & 2n-1 & | & 2 & 4 & \cdots & 2n 
\end{array} \right).
\end{equation*}
If $r = 2n-1 $, consider 
\begin{align*} 
J(s,W_{\varphi})= 
\int_{{\rm{N}}_n(\ak)\backslash {\rm{G}}_n(\ak)}
\int_{\liebn(\ak)\backslash \matn(\ak)} & W_{\varphi}\left(\sigma
\left(\begin{array}{ccc} I_{n} & X & 0\\ 0 & I_{n} & 0\\ 0 & 0 & 1
\end{array}\right)
\left(\begin{array}{ccc} g & 0 & 0 \\ 0 & g & 0\\ 0 & 0 & 1 \end{array}
\right) \right) \\
 &\psi(\tr X)dX |{\det g}|^{s-1} dg, 
\end{align*}
where  $ \sigma $ is the permutation given by
\begin{center}
$ \sigma = \left( \begin{array}{cccccccccc} 
1 & 2 & \cdots & n & | & n+1 & n+2 & \cdots & 2n & 2n-1 \\
1 & 3 & \cdots & 2n-1 & | & 2 & 4 & \cdots & 2n & 2n-1
\end{array} \right) $.
\end{center}
For decomposable vectors $W_{\varphi}= \displaystyle{\prod_{v}}^{\prime} W_{v}$ and $\Phi = \displaystyle{\prod_{v}}^{\prime}\Phi_{v} $, 
where $v$ runs over all the absolute values of $k$, 
we have
\begin{equation*}
 J(s, W_{\varphi}, \Phi) = \prod_{v}J(s, W_{v}, \Phi_{v}) ~\text{and}~ J(s, W_{\varphi}) = \prod_{v}J(s, W_{v})
\end{equation*}
for ${\rm Re}(s)\gg0$. As for the $L$-functions, if $S$ is a set of places of $k$, it will be useful to introduce the notation
\[
J_S(s, W_{\varphi}, \Phi)=\prod_{v\in S}J(s, W_{v}, \Phi_{v}) ~\text{and}~ J_S(s, W_{\varphi}) = \prod_{v\in S}J(s, W_{v})
\]
As a consequence of Proposition 1 of Section 9 of \cite{JaSh90a} we have the following global functional equation.
\begin{theorem}\label{jsfnaleeqnthm}
Let $\Pi=\otimes_v^{\prime}\Pi_v$ be a unitary cuspidal automorphic representation of $G_r(\ak)$.
\begin{enumerate}
\item Let $r=2n$. For every $W_{\varphi}=\prod_v^{\prime}W_v\in \wPi$ and $\Phi=\prod_{v}^{\prime}\Phi_{v}\in \cs(\ak^{n})$,
\begin{equation}\label{eveglobintfneqn}
J(1-s,\rho(w_{n,n})\widetilde{W}_{\varphi},\widehat{\Phi})=J(s,W_{\varphi},\Phi)
\end{equation}
\item Let $r=2n-1$. For every $W_{\varphi}=\prod_v^{\prime}W_v\in \wPi$,
\begin{equation}\label{oddglobintfneqn}
J(1-s, \widetilde{W}_{\varphi,1})=J(s,W_{\varphi,2})).
\end{equation}
\end{enumerate}
\end{theorem}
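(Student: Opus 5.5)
The global functional equation of Theorem \ref{jsfnaleeqnthm} is a consequence of the functional equation of the Eisenstein-type construction in Jacquet--Shalika. I would prove it as an identity of integrals first in a half-plane of absolute convergence and then by meromorphic continuation. I treat the even case $r=2n$; the odd case follows the same pattern with $W_{\varphi,1},W_{\varphi,2}$ in place of the Schwartz function.

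First I would unfold. Jacquet and Shalika show that $J(s,W_\varphi,\Phi)$ arises from a global period
\[
I(s,\varphi,\Phi)=\int_{[G_n]_k}\int_{[\matn]_k}\varphi\left(\begin{pmatrix} I_n&X\\0&I_n\end{pmatrix}\begin{pmatrix} g&0\\0&g\end{pmatrix}\right)\psi(\tr X)\,dX\,E(g,\Phi,s)\,dg .
\]
Here $E(g,\Phi,s)=|\det g|^s\int_{[Z_n]_k}\sum_{\xi\in k^n\setminus 0}\Phi(\xi z g)|\det z|^{2s}\,dz$ (suitably normalised with the central character) is the mirabolic Eisenstein series. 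One then unfolds the sum over $\xi$ to reach $P_n(k)\backslash G_n(\ak)$. Next one expands $\varphi$ along the unipotent radical using its Fourier expansion. Cuspidality kills the degenerate terms, and this yields $J(s,W_\varphi,\Phi)$ for $\re{s}\gg0$. The justification of these exchanges uses the rapid decay of cusp forms and the absolute convergence of Proposition \ref{jsintconv}. These are exactly Propositions 1 of Sections 7 and 9 of \cite{JaSh90a}, which I take as given.

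Second, I would use Poisson summation for $\Phi$ on $k^n$. This gives the functional equation
\[
E(g,\Phi,s)=E({}^{\iota}g,\widehat{\Phi},1-s),
\]
up to the boundary terms coming from $\xi=0$. Those terms are integrals of $\varphi$ against characters of $[G_n]_k$ composed with the Shalika period. They either vanish by cuspidality or yield entire contributions that cancel symmetrically. This is the standard Godement--Jacquet/Tate argument, and it also shows that $I(s,\varphi,\Phi)$ extends meromorphically to $\C$. Substituting $g\mapsto{}^{\iota}g$ in $I$ and using the invariance of $\varphi$ under $G_r(k)$, I would write ${}^{\iota}$ on $G_r$ as conjugation composed with $w_{n,n}$. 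This identifies $I(1-s,\ldots)$ computed with $\widehat\Phi$ with the period of the form $g\mapsto\varphi(w_r{}^{\iota}g\,w_{n,n})$. That form has Whittaker function $\rho(w_{n,n})\widetilde{W}_\varphi$.

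Third, unfolding again gives $J(1-s,\rho(w_{n,n})\widetilde W_\varphi,\widehat\Phi)$. The identity \eqref{eveglobintfneqn} then holds first where both sides converge, which is empty in general, so it is really an identity of meromorphic continuations through $I$.

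The main obstacle is the second step: handling the $\xi=0$ boundary terms and the convergence of the global period in a common strip. For that I would rely on the estimates of \cite{JaSh90a} and on cuspidality.
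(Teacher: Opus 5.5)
The paper gives no argument for this statement beyond citing \cite{JaSh90a}. For $r=2n$ your outline is the Jacquet--Shalika argument behind that citation: a global period against the mirabolic Eisenstein series, its functional equation via Poisson summation, the substitution $g\mapsto{}^{\iota}g$ combined with conjugation by $w_{n,n}$, unfolding on both sides, and an identity of meromorphic continuations.

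The genuine gap is part (2). Saying that the odd case ``follows the same pattern with $W_{\varphi,1},W_{\varphi,2}$ in place of the Schwartz function'' is not an argument, and the pattern does not transfer: for $r=2n-1$ there is no Eisenstein series at all. The period is an integral of $\varphi$ alone over $[G_n]_k$ (embedded by $g\mapsto\mathrm{diag}(g,g,1)$) and over compact unipotent quotients, against $\psi(\tr X)|\det g|^{s-1}$. It converges for every $s$ by rapid decay, and its symmetry comes from $g\mapsto{}^{\iota}g$ together with left invariance under $\tau\in G_r(k)$. That substitution carries the row unipotent $\left(\begin{smallmatrix} I_n&0&0\\0&I_n&0\\0&Z&1\end{smallmatrix}\right)$ used to build $W_{\varphi,1}$ to the column unipotent $\left(\begin{smallmatrix} I_n&0&Y\\0&I_n&0\\0&0&1\end{smallmatrix}\right)$ used to build $W_{\varphi,2}$. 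But a change of variables and a rational translation never turn $\Phi$ into $\widehat{\Phi}$. That exchange, and the unfolding of each side to an Eulerian integral, has to come from a Fourier expansion of $\varphi$ along these $n$-dimensional abelian unipotent subgroups (Poisson summation for $k^n\subset\ak^n$). None of this is in your proposal. Moreover, $W_{\varphi,1}$ and $W_{\varphi,2}$ are themselves built from $\Phi$ and $\widehat{\Phi}$, so they cannot ``replace'' it. As written, part (2) is unproved: you must either carry this out or, as the paper does, cite Section 9 of \cite{JaSh90a}.

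A smaller correction to part (1): the $\xi=0$ terms neither vanish by cuspidality nor give entire contributions. The identity $E(g,\Phi,s)=E({}^{\iota}g,\widehat{\Phi},1-s)$ is exact as an identity of meromorphic functions. The $\xi=0$ terms appear only inside its Tate-style proof, where they produce possible simple poles at $s=1$ and $s=0$ with constant residues proportional to $\widehat{\Phi}(0)$ and $\Phi(0)$. Paired with $\varphi$, such a residue gives a multiple of the Shalika period of $\varphi$. Its non-vanishing is exactly Jacquet and Shalika's criterion for $L^S(s,\Pi,\wedge^2)$ to have a pole at $s=1$, so cuspidality does not kill it. Nothing needs to vanish or cancel: these terms are symmetric under $(s,\Phi)\leftrightarrow(1-s,\widehat{\Phi})$ and are already built into the exact functional equation of $E$. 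You simply integrate that identity against the rapidly decreasing function $g\mapsto\int_{[\matn]_k}\varphi(\cdots)\psi(\tr X)\,dX$ on $Z_n(\ak)G_n(k)\backslash G_n(\ak)$. So the step you flag as the main obstacle is not one, but your stated reason for it is wrong.
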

At the finite unramified places $v$ of $\Pi$ we invoke Proposition \ref{unramvector} to choose the local data so that
$J(s,W_v,\Phi_v)=\lePiv$ when $r$ is even (resp. $J(s,W_{2,v})=\lePiv$ when $r$ is odd). Further, we can and will
assume that the local characters $\psi_v$ have been chosen so that $\epjsepiv=1$ when $v$ is a finite
unramified place. We let the data at the set of finite ramified places $S_r$ of $\Pi$ and the set of infinite places 
$S_{\infty}$ of $k$ be arbitrary, and set $S_{\infty,r}=S_{\infty}\cup S_r$. Let $S_f$ denote the set of all the finite places.
Using the local functional equations \eqref{padicevelocfneqn} and  \eqref{padicoddlocfneqn} at the finite ramified 
places, we can transform the equations \eqref{eveglobintfneqn} and \eqref{oddglobintfneqn} to the forms
\begin{equation}\label{intermedevefneqn}
J_{S_{\infty}}(s,W_{\varphi},\Phi_v)L_{S_f}(s,\Pi,\wedge^2)=
\varepsilon_{JS,S_r}(s,\Pi,\wedge^2,\psi)J_{S_{\infty}}(1-s,\widetilde{W}_{\varphi},\widehat{\Phi}_v)
L_{S_f}(1-s,\tilde{\Pi},\wedge^2)
\end{equation}
and
\begin{equation}\label{intermedoddfneqn}
J_{S_{\infty}}(s,W_{2,v})L_{S_f}(s,\Pi,\wedge^2)=\varepsilon_{JS,S_r}(s,\Pi,\wedge^2,\psi)J_{S_{\infty}}(1-s,\widetilde{W}_{1,v})
L_{S_f}(1-s,\tilde{\Pi},\wedge^2)
\end{equation}
respectively, according to whether $r$ is even or odd. Using the equations \eqref{padicevelocfneqn} and \eqref{padicoddlocfneqn} we see that the equations 
\eqref{intermedevefneqn} and \eqref{intermedoddfneqn} can be transformed to 
\begin{equation}\label{jslfnaleqn}
L(s,\Pi,\wedge^2)=\varepsilon_{JS,S_{\infty}}(s,\Pi,\wedge^2,\psi)\varepsilon_{JS,S_r}(s,\Pi,\wedge^2,\psi)
L(1-s,\tilde{\Pi},\wedge^2).
\end{equation}
In view of the factorisation of the global $\varepsilon$-factor into the local $\varepsilon$-factors, we can rewrite
the functional equation \eqref{lshfnaleqn}. We record this below as
\begin{equation}\label{lshfnaleqntwo}
L(s,\Pi,\wedge^2)=\varepsilon_{Sh,S_{\infty}}(s,\Pi,\wedge^2,\psi)\varepsilon_{Sh,S_r}(s,\Pi,\wedge^2,\psi).
L(1-s,\tilde{\Pi},\wedge^2).
\end{equation}

\section{Analyticity of the $L$-factors, $\varepsilon$-factors and local Zeta integrals}\label{secanalyticity}

As before, let $\pi_{\lambda}=\Ind_P^{G_r}(\sigma_{\lambda})$ be a generic representation of $G_r$ and let
$F$ be a $p$-adic field. In this section we study the behaviour of the $L$-factors, $\varepsilon$-factors 
and local Zeta integrals in the parameter $\lambda$.

\begin{proposition}\label{ntlfnisholfinord} Suppose
$\pi_{\lambda}\in \grepfr$, $\lambda \in ({\mathcal A}_{M,\C})^{*}$. For $*=G,Sh$, we have
\begin{enumerate}
 \item Let $U=\{\lambda=(\lambda_1,\lambda_2,\ldots ,\lambda_k)\in ({\mathcal A}_{M,\C})^{*} \,\big\vert\vert\,
\re{\lambda_i}\vert<1/4\}$. 
For every $s\in\C$ with $\res =1/2$, the functions
\begin{equation*}
\lambda \mapsto \varepsilon_*(s,\pi_{\lambda},\wedge^2,\psi)\quad\text{and}\quad
\lambda \mapsto \gamma_*(s,\pi_{\lambda},\wedge^2,\psi)
\end{equation*}
are holomorphic in $U$.

\item For any $s\in\C$, the function $(s,\lambda)\to L(s,\pi_{\lambda},\wedge^2)^{-1}$ is entire and of finite order in vertical strips in the first variable locally uniformly in the second variable. Moreover, if $\pi$ is nearly
tempered the function is nonvanishing in $\uh_{1/2-\epsilon}$ for some $\epsilon>0$.
\end{enumerate}
\end{proposition}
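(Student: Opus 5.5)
We reduce everything to the inducing data via Proposition \ref{inductextsq}, which applies for $*=G$ and $*=Sh$ and every $\lambda\in\C^k$. It expresses $L_*(s,\pi_\lambda,\wedge^2)$ and $\varepsilon_*(s,\pi_\lambda,\wedge^2,\psi)$ as finite products of exterior square factors of the discrete series $\sigma_i$ at $s+2\lambda_i$, and of Rankin--Selberg factors of $\sigma_i\times\sigma_j$ at $s+\lambda_i+\lambda_j$. By \eqref{allthreelfnequal} the $L$-factors agree, so we may work with either description.

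\textbf{Part (1).} Each $\varepsilon$-factor of a discrete series, exterior square or Rankin--Selberg, has the form $c\,q^{-\kappa s}$. Hence $\lambda\mapsto\varepsilon_*(s,\pi_\lambda,\wedge^2,\psi)$ is a finite product of functions $c_i q^{-\kappa_i(s+2\lambda_i)}$ and $c_{ij}q^{-\kappa_{ij}(s+\lambda_i+\lambda_j)}$. These are entire in $(s,\lambda)$, in particular holomorphic on $U$.

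For $\gamma_*$ it remains to control the quotient $L(1-s,\tilde\pi_\lambda,\wedge^2)/L(s,\pi_\lambda,\wedge^2)$.
\begin{itemize}
\item The numerator factors into $L(1-s-2\lambda_i,\tilde\sigma_i,\wedge^2)$ and $L(1-s-\lambda_i-\lambda_j,\tilde\sigma_i\times\tilde\sigma_j)$, where we use $\tilde\pi_\lambda=\Ind(\tilde\sigma_{-\lambda})$.
\item For a discrete series (unitary, tempered) $\sigma$, the factors $L(t,\sigma,\wedge^2)$ and $L(t,\sigma_i\times\sigma_j)$ are holomorphic for $\re t>0$. This is the standard fact that the poles of $L$-functions of tempered representations lie in $\re t\le 0$; it can be read off from the Galois side, since the Weil--Deligne parameters are bounded.
\item When $\res=1/2$ and $|\re{\lambda_i}|<1/4$, the arguments satisfy $\re(1-s-2\lambda_i)>0$ and $\re(1-s-\lambda_i-\lambda_j)>0$. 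So the numerator is holomorphic on $U$.
\item The reciprocal $1/L(s,\pi_\lambda,\wedge^2)$ is a polynomial in $q^{-s-2\lambda_i}$ and $q^{-s-\lambda_i-\lambda_j}$, hence entire.
\end{itemize}
Together these give holomorphy of $\gamma_*$ on $U$.

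\textbf{Part (2).} For $p$-adic $F$, $L(t,\sigma,\wedge^2)^{-1}=P(q^{-t})$ with $P$ a polynomial. The reciprocal $L(s,\pi_\lambda,\wedge^2)^{-1}$ is then a finite product of polynomials in $q^{-s-2\lambda_i}$ and $q^{-s-\lambda_i-\lambda_j}$, so it is entire in $(s,\lambda)$. Each factor is bounded by $C\,q^{N|\re(s)|+N'\sum|\re\lambda_i|}$. On a vertical strip times a compact set of $\lambda$ this is bounded, so the finite-order condition holds trivially, in fact with order $0$.

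For nonvanishing, take $\pi$ nearly tempered, so $|\re\lambda_i|<1/4$. A zero of $L^{-1}$ is a pole of some factor. Such a pole requires $\re(s+2\lambda_i)\le 0$ or $\re(s+\lambda_i+\lambda_j)\le0$, which forces $\res< 1/2$. More precisely, it forces $\res\le 2\max|\re\lambda_i|<1/2$. Taking $\epsilon=1/2-2\max_i|\re\lambda_i|>0$, the function is nonvanishing on $\uh_{1/2-\epsilon}$.

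\textbf{Main obstacle.} The key input is the location of poles of $L(t,\sigma,\wedge^2)$ and $L(t,\sigma_i\times\sigma_j)$ for discrete series $\sigma$, namely that they lie in $\re t\le0$. I would justify this via the LLC: the parameter of a discrete series is $\rho\otimes{\rm Sp}_m$ with $\rho$ irreducible and unitary. Then $\wedge^2$ and the tensor products decompose into $\tau\otimes{\rm Sp}_\ell$ with $\tau$ unitary, and $L(t,\tau\otimes{\rm Sp}_\ell)=L(t+(\ell-1)/2,\tau)$ has poles only in $\re t\le -(\ell-1)/2\le0$. This then transfers to the Langlands--Shahidi factors by Theorem \ref{henniart}.
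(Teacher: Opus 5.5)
Your proposal is correct and follows essentially the route the paper has in mind. The paper only remarks that part (1) is easy and mirrors the archimedean argument of \cite{JLST2025} and Lemma 3.2.1 of \cite{BP2021}, and that part (2) is almost trivial for $p$-adic $F$; your argument spells this out via the inductive formulas of Proposition \ref{inductextsq}, the holomorphy of discrete-series $L$-factors on $\re{t}>0$, and the fact that $L(s,\pi_\lambda,\wedge^2)^{-1}$ is a polynomial in the variables $q^{-s-2\lambda_i}$ and $q^{-s-\lambda_i-\lambda_j}$, with the explicit choice $\epsilon=1/2-2\max_i|\re{\lambda_i}|$ supplying the detail the paper leaves implicit.
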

The proof of the first part of the proposition above is easy, and amost identical to the proof in the archimedean 
case given in \cite{JLST2025}  (and similar to the proof of Lemma 3.2.1 of \cite{BP2021} which deals 
with the Asai factors). The second part is almost trivial in our $p$-adic setting.

In the remainder of this section we recall two results from Section 3 of \cite{JLST2025}.
The proposition below establishes the convergence of the archimedean local integrals
in a suitable half-plane by following the argument in \cite{JaSh90a}, but 
makes the dependence on the parameter $\mu$ explicit using Lemma 2.5.1 of \cite{BP2021}. It 
is the direct analogue of Lemma 3.3.1 of \cite{BP2021}  which proves a similar result for the Flicker 
integrals for the Asai factors. This appears as Proposition 3.6 in Section 3 of \cite{JLST2025} 
(in mildly different notation from ours).

\begin{proposition}\label{jsintlambdahol}
Let $\mu\in \mathcal{A}^*$ and let
$\min(\mu):=\min_{1\le j\le r}\{\mu_j\}$ for $r=2n$ (resp. $r=2n-1$). Then for every   $W\in \mathcal{C}_{\mu}(N_r(F)\backslash G_r(F),\psi)$ and $\phi\in \mathcal{S}(F^n)$ (resp. every ($W\in \mathcal{C}_{\mu}(N_r(F)\backslash G_r(F),\psi)$), the integral $J(s,W,\phi)$ (resp. $\jswf$)
is absolutely convergent for all 
$s\in \uh_{-2\operatorname{min}(\mu)}$. Moreover, the function $s\mapsto J(s,W,\phi)$ (resp. $\jsw$) is 
holomorphic and bounded in vertical strips in this domain. For every vertical strip $V\subseteq \uh_{>-2\min(\mu)}$ there exists a continuous semi-norm 
$p_{V,\phi}$ on $\mathcal{C}_{\mu}(N_r(F)\backslash G_r(F),\psi)$ such that
\[
 \left\lvert J(s,W,\phi)\right\rvert\leqslant p_{V,\phi}(W)
 \]
for every $W\in \mathcal{C}_{\mu}(N_r(F)\backslash G_r(F),\psi)$ and $s\in V$. In particular,
the map $(W,\phi)\mapsto J(s,W,\phi)$ is continuous.
\end{proposition}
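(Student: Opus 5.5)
We follow the argument of \cite{JaSh90a} and Lemma 3.3.1 of \cite{BP2021}. The plan has four steps: reduce to a torus integral via the Iwasawa decomposition, bound $W$ on the torus by the defining seminorm of $\hcsfmu$, check that the resulting torus integral converges for $s\in\uh_{-2\min(\mu)}$, and deduce holomorphy, boundedness and continuity.

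\emph{Reduction.} Treat $r=2n$; the case $r=2n-1$ is identical, with $|\det g|^{s-1}$ replacing $|\det g|^{s}\phi(e_ng)$ and the extra coordinate absorbed. Write $g=uak$ with $u\in N_n$, $a$ diagonal and $k\in K_n$. The measure on $N_n\backslash G_n$ then becomes $\delta_{B_n}(a)^{-1}\,da\,dk$. Next, handle the inner integral over $\liebn\backslash\matn$ as in Section 7 of \cite{JaSh90a}. Conjugate $X$ past $\mathrm{diag}(a,a)$, and replace the unipotent variable by a lower triangular one using the standard Iwasawa-decomposition trick for $\begin{pmatrix} I & 0\\ X & I\end{pmatrix}$-type elements. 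This lets us dominate the integrand by
\[
\bigl|W\bigl(\sigma\, t(a,X)\,\sigma^{-1}\cdot k'\bigr)\bigr|,
\]
where $t(a,X)$ is a torus element of $G_{2n}$ whose entries are $a_i$ times factors $\|X\|$-type heights bounded below by $1$.

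\emph{Bounding $W$.} Use the bound defining the topology of $\hcsfmu$ (Section 2.5 and Lemma 2.5.1 of \cite{BP2021}):
\[
|W(tk)|\le p(W)\,\delta_{B_r}(t)^{1/2}\,t^{-\mu}\,\log(2+\|t\|)^{-d}
\]
for all $d$. Here $p$ is a continuous seminorm, and there is additional rapid decay in the simple-root coordinates $t_i/t_{i+1}\to\infty$ coming from $\psi$-equivariance, exactly as in \cite{JaSh90a}. Inserting this bound turns the absolute integral into a product of one-variable integrals of the shape
\[
\int |a_i|^{\,2\mu_j+\re{s}}\cdots\,d^\times a_i
\]
over the region $|a_i|\lesssim|a_{i+1}|$, together with convergent $X$-integrals. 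The factor $\phi(e_n g)$ controls $|a_n|\to\infty$, and the fact that $2\mu_j+\re{s}>0$ for all $j$ (which is exactly $s\in\uh_{-2\min(\mu)}$) controls $|a_i|\to0$. This step is where the exponent $-2\min(\mu)$ arises, because the diagonal embedding $g\mapsto\mathrm{diag}(g,g)$ composed with $\sigma$ pairs the characters $t^{-\mu}$ and $\delta^{1/2}$ so that each $|a_i|$ appears with exponent $2\mu_j$ (after the modular characters cancel).

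\emph{Conclusion and main obstacle.} Once absolute convergence is dominated by $p(W)\,q(\phi)\,C_V$, uniformly for $s$ in a vertical strip $V$, the rest follows. Holomorphy comes from Morera's theorem with dominated convergence. The bound $|J(s,W,\phi)|\le p_{V,\phi}(W)$, and hence boundedness in strips and continuity in $(W,\phi)$, are immediate, with continuity in $\phi$ coming from a Schwartz seminorm. The main obstacle is the bookkeeping in the reduction: verifying that after $\sigma$-conjugation and the Iwasawa treatment of $X$, the resulting torus exponents are precisely $2\mu_j$ up to $\delta$-factors that cancel, and that the logarithmic factors make the $X$-integrals converge uniformly. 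To get this right, I would copy the corresponding computation in \cite{JLST2025}, Proposition 3.6, and track $\mu$ explicitly in place of the exponents of $\pi$.
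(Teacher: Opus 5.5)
Your plan follows the paper's route. The paper gives no argument of its own here: it quotes Proposition~3.6 of \cite{JLST2025}, which is Jacquet--Shalika's convergence proof with the dependence on $\mu$ tracked through Lemma~2.5.1 of \cite{BP2021} (the analogue of Lemma~3.3.1 there), and that is what you propose to reproduce.

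Your bookkeeping heuristics need correcting in two places. First, $\sigma\,\mathrm{diag}(a,a)\,\sigma^{-1}=\mathrm{diag}(a_1,a_1,\ldots,a_n,a_n)$, so $|a_i|$ carries $\mu_{2i-1}+\mu_{2i}$, not $2\mu_j$. The half-plane $\uh_{-2\min(\mu)}$ comes from $\mu_{2i-1}+\mu_{2i}\ge 2\min(\mu)$; with your bound written as $t^{-\mu}$ the signs reverse and you would get a condition in terms of $\max(\mu)$, so you must match the sign convention of the statement. Second, the $X$-integral does not split off from the $a$-integral, since the two are coupled through the simple-root coordinates. It converges because of the rapid decay in those coordinates produced by the Iwasawa torus part of $\sigma u(X)\sigma^{-1}$, not because of the logarithmic factors.
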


If $\pi=\pi_{\lambda}$ is nearly tempered, $|\re{\lambda_i}|<1/4$, $1\le i\le 1/4$, and we have the
following corollary from \cite{JLST2025}.
\begin{corollary}\label{ntjsintisholandbdd}
 Assume that $\pi\in \ntemprepfr$. Then, there exists 
 $\delta>0$ such that for every $W\in \wpiflambda$ and $\phi\in \sfnf$ (resp. $W\in \wpiflambda$)
 the integral $\jswphif$ (resp. $\jswf$) is holomorphic and bounded in 
 vertical strips in $\uh_{1/2-\epsilon}$.
\end{corollary}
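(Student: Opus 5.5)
The plan is to deduce the corollary directly from Proposition \ref{jsintlambdahol}. The one thing to establish is that every Whittaker function of a nearly tempered $\pi_\lambda$ lies in a Harish-Chandra Schwartz space $\hcsfmu$ with $-2\min(\mu)<1/2-\epsilon$ for some $\epsilon>0$ depending only on $\lambda$.

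First reduce to $\lambda\in({\mathcal A}_{M,\C}^{G_r})^{*}$. Write $\lambda=\lambda'+(c,\ldots,c)$ with $\lambda_1'+\cdots+\lambda_k'=0$. Then $\pi_\lambda=\pi_{\lambda'}\nu^{c}$ and $\wpiflambda=\{W'\nu^{c}\}$ with $W'\in{\mathcal W}(\pi_{\lambda'},\psi)$.

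From the definitions \eqref{J(even)} and \eqref{J(odd)}, the twist $\nu^c$ contributes $|\det g|^{2c}$ on the diagonal block $\mathrm{diag}(g,g)$; the unipotent part and $\sigma$ contribute nothing. Hence
\[
J(s,W'\nu^c,\phi)=J(s+2c,W',\phi),
\]
and similarly in the odd case. Since $|\re{\lambda_i}|<1/4$ for all $i$, both $|\re{c}|$ and $|\re{\lambda_i'}|$ are controlled. Either we track the shift explicitly, or, more cleanly, we note that the bound on $\min(\mu)$ below only involves the individual $\re{\lambda_i}$, so we may carry the central part along. In what follows I work with $\lambda$ directly, reading Proposition \ref{whittanalytic} up to this twist.

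Next, apply Proposition \ref{whittanalytic} with $\lambda_0=\lambda$ and $W_0=W$ an arbitrary element of $\wpiflambda$. It produces an analytic family $W_{\lambda'}$ with $W_{\lambda}=W$, and $W_{\lambda}\in\hcsfmu$ for every $\mu\in({\mathcal A}_{T_r}^{G_r})^{*}$ with $|\re{\lambda}|\prec\mu$. Here $|\re{\lambda}|$ is the antidominant Weyl conjugate of the vector in $\R^r$ whose entries are the $\re{\lambda_i}$, each repeated $r_i$ times. All its coordinates lie in $(-1/4,1/4)$, since $\max_i|\re{\lambda_i}|<1/4$.

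Set $\delta_0=1/4-\max_i|\re{\lambda_i}|>0$. I would choose $\mu=|\re{\lambda}|+\eta\rho^{\vee}$, where $\eta\rho^{\vee}$ is a small positive combination of simple roots, so that $|\re{\lambda}|\prec\mu$. Taking $\eta$ small enough keeps every coordinate of $\mu$ strictly above $-1/4+\delta_0/2$. The point to check carefully is that the coordinates of $\mu$ stay close to those of $|\re{\lambda}|$; a positive combination of simple roots has coordinates bounded by a constant times $\eta$, so this holds.

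Then $\min(\mu)>-1/4+\delta_0/2$, so $-2\min(\mu)<1/2-\delta_0$. With $\epsilon=\delta_0$ we get $\uh_{1/2-\epsilon}\subseteq\uh_{-2\min(\mu)}$. Proposition \ref{jsintlambdahol} now gives absolute convergence of $\jswphif$ (resp. $\jswf$) on $\uh_{1/2-\epsilon}$, together with holomorphy and boundedness on every vertical strip there; boundedness follows from the seminorm estimate $|J(s,W,\phi)|\le p_{V,\phi}(W)$. Both $\epsilon$ and $\mu$ depend only on $\lambda$, not on $W$ or $\phi$.

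The main obstacle is the normalization of $\hcsfmu$ and of $\min(\mu)$. One must verify that the exponent convention in Proposition \ref{jsintlambdahol}, taken from \cite{JLST2025}, matches the one in Proposition \ref{whittanalytic} from \cite{BP2021}, so that nearly tempered really corresponds to $-2\min(\mu)<1/2$ and not to a shifted threshold. I would first check this against the tempered case $\lambda\in i\R^k$. There $\mu$ can be taken arbitrarily close to $0$, and the conclusion should reproduce the half-plane of convergence $\uh_{1-\eta}\supset\uh_{1/2}$ given by Proposition \ref{jsintconv}; this consistency check fixes the normalization.
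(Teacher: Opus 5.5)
Your proof is correct and is the intended argument. The paper quotes this corollary from \cite{JLST2025}, and the same mechanism is exactly what it uses in the proof of Proposition \ref{feforntemp}: choose $\mu=|\re{\lambda}|+\epsilon\rho$ with $\min(\mu)>-1/4$, so that Proposition \ref{whittanalytic} puts every $W\in\wpiflambda$ in $\hcsfmu$, and Proposition \ref{jsintlambdahol} then gives holomorphy and boundedness in vertical strips on $\uh_{-2\min(\mu)}\supseteq\uh_{1/2-\epsilon}$.

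The only slip is in your closing sanity check, which plays no role in the argument. For tempered $\pi$ the threshold $-2\min(\mu)$ can be made arbitrarily close to $0$, so you get a half-plane strictly containing that of Proposition \ref{jsintconv} rather than reproducing it. Also, the inclusion $\uh_{1-\eta}\supset\uh_{1/2}$ you wrote is backwards when $\eta<1/2$.
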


\section{The equality of the $\varepsilon$-factors for supercuspidal representations}\label{secscrep}

We will first establish the equality $\epjsepif=\epshepif$ for supercuspidal representations $\pi$
using an old globalisation theorem of Henniart in Appendix I of \cite{Henniart1984}. 
As we mentioned in the introduction, a more recent reference is the relative version of the theorem proved
in \cite{PrSc2008} from which the theorem below can also be deduced.
\begin{theorem}\label{henniart84} Let $F$ be a $p$-adic field and let $\pi$ be a supercuspidal representation 
of $G_r$. There is a number field $k$, a place $v_0$ of $k$ such that $k_{v_0}\cong F$, and a cuspidal 
automorphic representation 
$\Pi=\otimes_v^{\prime}\Pi_v$ of $\glr(\ak)$ such that $\Pi_{v_0}\cong \pi$ and $\Pi_v$ is unramified for 
all finite places $v\ne v_0$.
\end{theorem}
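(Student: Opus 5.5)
This is Henniart's globalisation theorem, so I would not reprove it from scratch; I would sketch the standard argument in the form given in Appendix I of \cite{Henniart1984}, or deduce it from the relative globalisation theorem of \cite{PrSc2008}. The approach uses the Poincaré series construction, based on matrix coefficients of supercuspidal representations.

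First, choose a number field $k$ and a finite place $v_0$ with $k_{v_0}\cong F$. One can take $k$ dense in $F$ of degree $[F:\Q_p]$ over $\Q$, for instance by Krasner's lemma applied to a generator of $F/\Q_p$. Fix an auxiliary finite place $v_1\neq v_0$.

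Next, build a test function $f=\otimes_v f_v$ on $G_r(\ak)$, taken modulo the centre with the central character adjusted. Globalise the central character $\omega_\pi$ to a Hecke character unramified outside $v_0$; after twisting $\pi$ by an unramified character this is harmless. The local components are chosen as follows.
\begin{itemize}
\item At $v_0$, let $f_{v_0}$ be a matrix coefficient of $\pi$. Since $\pi$ is supercuspidal, this coefficient is compactly supported modulo the centre.
\item At $v_1$, let $f_{v_1}$ be supported on a very small compact open subgroup.
\item At the archimedean places, let $f_v$ be a smooth compactly supported function.
\item At all other finite places, let $f_v$ be the characteristic function of $G_r(\mathcal O_v)$.
\end{itemize}

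Form the Poincaré series $P_f(g)=\sum_{\gamma\in Z(k)\backslash G_r(k)} f(\gamma g)$. Compact support modulo the centre makes this sum finite on compacta. Because $f_{v_0}$ is a supercuspidal matrix coefficient, $P_f$ is cuspidal: the constant terms vanish, since the integrals of $f_{v_0}$ over unipotent radicals are zero.

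The key step, and the main obstacle, is to show that $P_f\neq 0$. I would shrink the support at $v_1$ and at the archimedean places so that the only $\gamma\in G_r(k)$ with $f(\gamma g)\neq 0$ for $g$ near $1$ is central; this uses the discreteness of $G_r(k)$ in $G_r(\ak)$. Then $P_f(1)=f(1)\neq0$.

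Finally, decompose the span of the right translates of $P_f$ in the cuspidal spectrum. Take any irreducible constituent $\Pi$ onto which $P_f$ projects nontrivially. At $v_0$, the function $f_{v_0}$ acts through the projector onto $\pi$, so $\Pi_{v_0}\cong\pi$. At each finite place $v\notin\{v_0,v_1\}$, $P_f$ is $G_r(\mathcal O_v)$-invariant, so $\Pi_v$ is unramified.

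The remaining difficulty is the auxiliary place $v_1$, where $\Pi_{v_1}$ may be ramified. I would handle it in one of two ways. The first is to replace the small-support trick by choosing $k$ so that discreteness is already forced by the archimedean places together with $v_0$. This is possible because $G_r(\mathcal O_{k}[1/v_0])$ is discrete in the product over $v_0$ and the archimedean places, so small support there suffices. The second is simply to invoke \cite{PrSc2008}, which gives exactly this control.
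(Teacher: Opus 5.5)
Your proposal is correct, and it is the standard argument behind the result that the paper cites without proof (Henniart's appendix, or Prasad--Schulze-Pillot): a Poincar\'e series built from a supercuspidal matrix coefficient at $v_0$ and spherical characteristic functions at the other finite places, with $\Pi_{v_0}\cong\pi$ forced by Schur orthogonality. The auxiliary place $v_1$ was never needed: shrinking the archimedean support alone already leaves only central $\gamma$ contributing, by discreteness of $Z(k)\backslash G_r(k)$ in $Z(\ak)\backslash G_r(\ak)$ together with $G_r(k)\hookrightarrow G_{r,v}$ for an archimedean $v$, so your first fix works for any $k$ with $k_{v_0}\cong F$ and no special choice of $k$ is required.
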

This allows us to prove the following proposition which is a special case of our main theorem almost immediately.
\begin{proposition}\label{scepjsequalsepshprop} Let $\pi$ be a supercuspidal representation of $G_r$. Then
\begin{equation}\label{scepjsequalsepsheqn}
\epjsepif=\epshepif.
\end{equation}
\end{proposition}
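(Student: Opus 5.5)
We globalise $\pi$ with Theorem \ref{henniart84} and compare the two global functional equations \eqref{jslfnaleqn} and \eqref{lshfnaleqntwo}. The aim is to isolate the place $v_0$.

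First, twisting by an unramified character $\nu^{s_0}$ shifts both sides of \eqref{scepjsequalsepsheqn} by the same translation in $s$. This holds for the Jacquet-Shalika factor because the integrals simply shift in $s$, and for the Langlands-Shahidi factor by definition. We may therefore assume $\pi$ is unitary supercuspidal. Theorem \ref{henniart84} then gives a number field $k$, a place $v_0$ with $k_{v_0}\cong F$, and a cuspidal automorphic $\Pi$ with $\Pi_{v_0}\cong\pi$ and $\Pi_v$ unramified at all finite $v\neq v_0$. Since $\Pi_{v_0}$ is unitary, the central character is unitary. Twisting by a power of $|\det|$, which is trivial at $v_0$ only up to a shift, we may assume $\Pi$ itself is unitary.

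Next, choose the global character $\psi=\otimes_v\psi_v$ trivial on $k$. Then at every finite $v\neq v_0$ both $\varepsilon_{JS}(s,\Pi_v,\wedge^2,\psi_v)$ and $\varepsilon_{Sh}(s,\Pi_v,\wedge^2,\psi_v)$ are $1$. For the Jacquet-Shalika factor this follows from Proposition \ref{unramvector} and the normalisation made before \eqref{intermedevefneqn}. For the Langlands-Shahidi factor it follows from the standard unramified computation, equivalently from the Galois factor via \cite{CoShTs2017}. If the conductors of the $\psi_v$ cannot all be made trivial, the extra factors are explicit unramified $\varepsilon$-factors. Both methods give the same value $\omega$-twisted power of $q_v^{-s}$, via Proposition \ref{inductextsq} and the corresponding formula for unramified data, so these factors cancel. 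Hence $S_r=\{v_0\}$.

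With $S_r=\{v_0\}$, dividing \eqref{jslfnaleqn} by \eqref{lshfnaleqntwo} gives
\[
\varepsilon_{JS,S_\infty}(s,\Pi,\wedge^2,\psi)\,\epjsepif=\varepsilon_{Sh,S_\infty}(s,\Pi,\wedge^2,\psi)\,\epshepif .
\]
This requires the completed $L$-function $L(s,\Pi,\wedge^2)$ to be not identically zero, together with the local $L$-functions at all places. At the finite places this uses \eqref{allthreelfnequal}. At the archimedean places it uses that the archimedean Jacquet-Shalika $L$-factors entering \eqref{jslfnaleqn} are the Galois ones, which is part of \cite{JLST2025}. By \eqref{archallepequaleqn} the archimedean $\varepsilon$-factors agree and are nonvanishing, so they cancel, and \eqref{scepjsequalsepsheqn} follows.

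Two points need care. We must check that \eqref{jslfnaleqn} really holds for this $\Pi$. The local functional equation (Theorem \ref{padiclocfneqnthm}) is needed at $v_0$, where it is available for supercuspidal (globalisable) $\pi$ by \cite{KeRa2012}. We also need to pick test data making the $S_\infty$ integrals and $L(s,\Pi,\wedge^2)$ not identically zero, which is possible by Proposition \ref{archnonvanish}. The main obstacle is keeping the normalisations of $\psi_v$ consistent at the unramified places, so that both local $\varepsilon$-factors there are genuinely $1$, or at least equal. I would handle this first, by fixing a standard $\psi$ on $\ak/k$ and comparing the unramified $\varepsilon$-factors directly through Proposition \ref{inductextsq}.
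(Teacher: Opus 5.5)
Your proposal is correct and is essentially the paper's argument: globalise $\pi$ by Theorem \ref{henniart84} so that $S_r=\{v_0\}$, divide \eqref{jslfnaleqn} by \eqref{lshfnaleqntwo}, and cancel the archimedean factors via \eqref{archallepequaleqn}. The extra points you flag (unitarity of $\Pi$, the local functional equation at $v_0$ from \cite{KeRa2012}, nonvanishing test data, conductors of the $\psi_v$) are ones the paper passes over silently; note, however, that Proposition \ref{inductextsq} covers only $*=G,Sh$, so at an unramified place with ramified $\psi_v$ you would still need the $\psi$-dependence of $\varepsilon_{JS}$ computed directly from the integrals, a point the paper sidesteps by simply assuming $\varepsilon_{JS}(s,\Pi_v,\wedge^2,\psi_v)=1$ there.
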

\begin{proof} Let $k$ be a number field and $\Pi=\otimes_v^{\prime}\Pi_v$ be a cuspidal 
automorphic representation of $\glr(\ak)$ as in the theorem. For any cuspidal automorphic 
representation $\Pi$ of $\glr(\ak)$, where  $k$ is a number field, dividing the global functional 
equation \eqref{jslfnaleqn} by the global functional equation \eqref{lshfnaleqntwo} yields
\[
\varepsilon_{JS,S_r}(s,\Pi,\wedge^2,\psi)=\varepsilon_{Sh,S_r}(s,\Pi,\wedge^2,\psi),
\]
since Theorem \ref{archallepequaleqn} shows that 
$\varepsilon_{JS,S_{\infty}}(s,\Pi,\wedge^2,\psi)=\varepsilon_{Sh,S_{\infty}}(s,\Pi,\wedge^2,\psi)$.
We now choose $k$ and $\Pi$ as in Theorem \ref{henniart84}. In that case the set $S_r=\{v_0\}$, so 
the result follows immediately.
\end{proof}

\begin{remark} Recall that establishng the equality $\epshepif=\epgepif$ for supercuspidal representations, 
or rather, the even weaker equality obtained after twisting by highly ramified characters was the single most
difficult part of the proof of the main theorem \cite{CoShTs2017} and was the focus of most of that 
paper. Once this was established the passage to all generic representations was easier and relied primarily on the inductive 
formul{\ae} given in \eqref{epfnofparind}, and certain globalisation arguments of Henniart. In our case, the analogue of
\eqref{epfnofparind} for $\epjsepif$ was not known before the results of this paper were proved, so we could not use them to
reduce the generic case to the supercuspidal one.
\end{remark}

\section{Global methods for generic $p$-adic representations}\label{globmethpadic}

In this section we will complete the proof of Theorem \ref{epjsequalepshthm}, while also giving new (and global) proofs
of Theorems \ref{xientirethm} and \ref{padiclocfneqnthm} along the way. Our starting point will be the following  consequence
of a theorem of Shin in \cite{Shin2012}.
\begin{theorem}\label{shinglobthmtwo} Let $k$ be a number field, and let $v_0$ be a finite place of $k$.
Let $V$ be any open subset of $\operatorname{Temp}(\glr(k_{v_0}))$. There exists a cuspidal 
automorphic representation $\Pi=\otimes_v^{\prime}\Pi_v$ of $\glr(\ak)$ 
such that $\Pi_{v_0}\in V$ and $\Pi_v$ is either unramified or supercuspidal for every 
finite place $v\ne v_0$. 
\end{theorem}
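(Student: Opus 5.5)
The plan is to obtain the statement as a direct specialisation of Shin's automorphic Plancherel density theorem (Theorem 4.8 of \cite{Shin2012}), applied to ${\rm G}={\rm G}_r$ over $k$. That theorem is stated for a set $S$ of finite places, test functions $\widehat{f}_S$ on the unitary dual at $S$, and level subgroups shrinking away from $S$. Concretely, I would take $S=\{v_0,v_1\}$, where $v_1\neq v_0$ is an auxiliary finite place. At $v_1$ the test function is a matrix coefficient (pseudo-coefficient) $f_{v_1}$ of a fixed supercuspidal representation $\sigma_1$ of $G_{r,v_1}$. At all finite places outside $S$ the test functions are characteristic functions of $\glr(\mathcal O_v)$, and at the archimedean places I would use an arbitrary fixed test function with nonzero trace on some representation, for instance a $K_\infty$-biinvariant one.

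The key local input is at $v_0$. By the description of the topology on $\temprepfr$ in Section \ref{secprelim}, the open set $V$ contains a set of the form $\{\pi_\lambda:\lambda\in\Omega\}$, where $\Omega\subset i({\mathcal A}_M^{G_r})^*$ is a nonempty open set for some fixed discrete series datum $\sigma$ on a Levi $M$. The Plancherel measure on this connected component is absolutely continuous with respect to Lebesgue measure on $\lambda$. Its density is a nonvanishing real-analytic function given by Harish-Chandra's $\mu$-function, so $\widehat\mu^{\rm pl}_{v_0}(V)>0$. I would then pick $f_{v_0}$ in the Hecke algebra whose Fourier transform $\widehat f_{v_0}$ is a nonnegative function approximating the indicator of a compact subset of $V$. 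Such an $f_{v_0}$ exists by the Paley--Wiener theorem of Bernstein, applied on the Bernstein component of $\pi_\lambda$: one takes a smooth compactly supported function on $\Omega$, symmetrised under the finite stabiliser. This gives $\widehat\mu^{\rm pl}_{v_0}(\widehat f_{v_0})>0$.

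Shin's theorem then states that the normalised counting measure of discrete automorphic representations $\Pi$ with $\Pi_{v_1}\cong\sigma_1$ (this is detected by $f_{v_1}$), unramified outside $S\cup S_\infty$ (since the level is hyperspecial there), and weighted by $\widehat f_{v_0}(\Pi_{v_0})$, converges to $\widehat\mu^{\rm pl}_{v_0}(\widehat f_{v_0})>0$. Hence for suitable level there is at least one such $\Pi$ with $\widehat f_{v_0}(\Pi_{v_0})\neq0$, so that $\Pi_{v_0}\in V$. Because $\Pi_{v_1}$ is supercuspidal, the Moeglin--Waldspurger classification of the discrete spectrum of ${\rm G}_r$ shows that $\Pi$ is cuspidal. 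At every finite $v\neq v_0$, $\Pi_v$ is then unramified, or supercuspidal when $v=v_1$, which is what is claimed.

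The main obstacle I expect is checking that the hypotheses of \cite{Shin2012} are met for ${\rm G}_r$ over an arbitrary number field $k$. Shin's basic setup asks for discrete series at infinity, and ${\rm GL}_r(\R)$ has none for $r>2$; it also requires controlling the central character. My first attempt would be to work with ${\rm G}_r$ modulo a fixed central character, or equivalently with ${\rm SL}_r/{\rm PGL}_r$, and then twist back. I would use the version of the theorem with a supercuspidal constraint at $v_1$, and if necessary a second one at another auxiliary place $v_2$. This makes the simple trace formula of Deligne--Kazhdan available without any archimedean hypothesis: the spectral side reduces to cuspidal terms and the geometric side to elliptic terms, and the identity term dominates in the level aspect.

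If that version is not directly available, the fallback is to run the simple trace formula argument by hand. One takes shrinking principal congruence level at a third auxiliary place and shows that the identity contribution $\mathrm{vol}\cdot f(1)$, with $f(1)=\widehat\mu^{\rm pl}(\widehat f)$, dominates the other elliptic orbital integrals. This last domination step is where the real work would lie.
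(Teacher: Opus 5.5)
\textbf{Gap: where the limit parameter lives.} Your overall strategy is the paper's. You specialise Theorem 4.8 of \cite{Shin2012}, the simple trace formula version, which needs no discrete series at infinity. You put fixed supercuspidal data at auxiliary places and $\mathbf{1}_{\mathrm{GL}_r(\mathcal{O}_v)}$ at the remaining finite places.

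The problem is that Shin's theorem is an asymptotic statement. It needs a sequence of test functions, away from $S$ and the auxiliary places, whose identity contribution tends to infinity.
\begin{itemize}
\item In your main plan, every finite place outside $S=\{v_0,v_1\}$ carries the fixed function $\mathbf{1}_{\mathrm{GL}_r(\mathcal{O}_v)}$, and the archimedean function is also fixed. Nothing varies, so there is no limit to invoke, and ``for suitable level'' has no content.
\item In your fallback, the principal congruence level shrinks at a third place $v_3$. Then $\Pi_{v_3}$ is only known to have nonzero $K(\varpi_{v_3}^m)$-fixed vectors. It may be a ramified principal series, a twist of Steinberg, and so on. The conclusion ``unramified or supercuspidal at every finite $v\neq v_0$'' therefore fails exactly at $v_3$.
\end{itemize}
That conclusion is the part of the theorem that matters in Proposition \ref{denseeppadic}. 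There the $\varepsilon$-factors are known to agree only at archimedean, unramified and supercuspidal places.

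The paper handles this by making the varying test function itself supercuspidal. At $v_3$ it takes a sequence $\phi_{n,v_3}$ of normalised matrix coefficients of supercuspidal representations of increasing level. Only $\Pi$ with $\Pi_{v_3}$ supercuspidal then contribute to the spectral side. The fixed data at $v_1$ (a truncated Kottwitz function) and at $v_2$ (a supercuspidal matrix coefficient) make the simple trace formula available. The resulting $\Pi$ is supercuspidal at $v_1,v_2,v_3$ and unramified at all other finite places. You need some device of this kind, so that the parameter going to infinity sits at a place where the local component is forced to be supercuspidal.

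\textbf{A second, smaller gap.} The step ``$\widehat f_{v_0}(\Pi_{v_0})\neq 0$, so $\Pi_{v_0}\in V$'' does not follow. By the trace Paley--Wiener theorem, $\lambda\mapsto \widehat f_{v_0}(\pi_\lambda)$ is a Laurent polynomial in the $q^{\pm\lambda_i}$. So it cannot vanish on the part of a tempered component outside $V$ without vanishing identically. It is also typically nonzero at nearby non-tempered points. Any conclusion about where $\Pi_{v_0}$ lies has to come from the measure form of the equidistribution: convergence of the counting measures on $\widehat{\mu}^{\mathrm{pl}}$-continuity sets, via Sauvageot's density principle. Nonvanishing of a single test function is not enough.
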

\begin{proof} Our theorem is follows easily by making appropriate choices of functions in 
Theorem 4.8 of \cite{Shin2012} to which we refer for the notation of this proof. Recall that the hypotheses 
of the theorem require a choice of a finite set $S$ of 
places, two finite places $v_1,v_2\notin S$, a sequence of functions $\phi_n^{S,v_1,v_2}$, 
a fixed function $\phi_S$, and a fixed function
$\phi_{v_1,v_2}$ satisfying certain properties. Let $S_{\infty}$ denote the set of infinite places,
and let $S=S_{\infty}\cup \{v_0\}$. We choose $\phi_{\infty}$ on $\glr(k_{S_{\infty}})$ such that $\widehat{\phi_{\infty}}$ 
is not identically zero and $\phi_{v_0}$ such that $\widehat{\phi_{v_0}}$ is not identically zero in $V$. 
Let $\phi_S=\phi_{\infty}\phi_{v_0}$. Let $v_3\neq v_0,v_1,v_2$ be a finite place.
Choose 
$\phi_{v}$ to be the characteristic function of the maximal compact subgroup $K_{r,v}$ for all $v\ne v_1,v_2,v_3$ when $v\nmid\infty$. 
Choose $\phi_{v_1}$ to be a ``truncated Kottwitz function" of a supercuspidal representation $\pi_1$ of $\glr(k_{v_1})$. 
In this case it will be just a normalised matrix coefficient of $\pi_1$ (see Lemma 12 of \cite{Clozel1986}).
Choose $\phi_{v_2}$ to be a (normalised) matrix coefficient of a supercuspidal representation 
$\pi_2$ of $\glr(k_{v_2})$ with $\widehat{\phi_{v_2}}$ positive on a single supercuspidal Bernstein component and vanishing outside 
of it. At a third place $v_3\notin S_{\infty}, v_1,v_2$ we choose 
$\phi_{n,v_3}$ to a be sequence of (normalised) matrix coefficients of supercuspidal representations 
$\Pi_{n,3}$ of $\glr(k_{v_3})$ of increasing levels so that their supports shrink to $1$. If we set 
$\phi_n^{S,v_1,v_2}=\prod_{v\ne v_1,v_2,v\notin S} \phi_v$ and 
$\phi_{v_1,v_2}=\phi_{v_1}\phi_{v_2}$, then the choice $\phi_n=\phi_n^{S,v_1,v_2}\phi_{v_1,v_2}\phi_S$ in 
Theorem 4.8 of \cite{Shin2012} produces the desired global representation for $n$ large enough -- it will be supercuspidal at 
$v_1$, $v_2$ and $v_3$, and unramified at all other finite places for some $\Pi_{v_0}\in V$.
\end{proof}

\begin{remark} We can also take $v_0$ to be an infinite place and obtain the same conclusion.
We further suspect (though we have not checked this)
that we can find a global representation which is supercuspidal at a single finite place and unramified at all 
other finite places while $\Pi_{v_0}\in V$. 
However, we have chosen to retain all of the structure of 
Shin's proof and notation, and this yields a representation with three supercuspidal components without 
any additional arguments.
\end{remark}
\begin{remark} Note that in contrast to the rest of the paper Theorem 4.8 of \cite{Shin2012} does not require the 
group $G$ to have a discrete series representation. Likewise, the requirement that the set $S$ in the theorem 
contains all infinite places is absent in the other sections of the paper.
\end{remark}

\begin{proposition} \label{denseeppadic} Let $F$ be a $p$-adic field, and let $V$ be any open subset of 
$\operatorname{Temp}(G_r)$. There exists $\pi\in V$ such that 
\begin{equation}\label{eparchjseqsh}
\epjsepif=\epshepif.
\end{equation}
In particular, \eqref{epjsequalepsheqn} holds for a dense subset of $\operatorname{Temp}(G_r)$.
\end{proposition}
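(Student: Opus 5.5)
The plan is to globalise a tempered representation in $V$ using Theorem \ref{shinglobthmtwo} and compare the two global functional equations, as in the proof of Proposition \ref{scepjsequalsepshprop}.

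First, choose a number field $k$ and a finite place $v_0$ with $k_{v_0}\cong F$; any number field with a place completing to $F$ will do. Theorem \ref{shinglobthmtwo} then gives a cuspidal automorphic representation $\Pi=\otimes_v^{\prime}\Pi_v$ of $\glr(\ak)$ with $\Pi_{v_0}\in V$. At every finite place $v\ne v_0$, the component $\Pi_v$ is either unramified or supercuspidal. After twisting by $\nu^{it}$ we may take $\Pi$ unitary, and we set $\pi:=\Pi_{v_0}$. Twisting by $|\det|^{it}$ keeps us inside $\operatorname{Temp}(G_r)$, but it might move $\pi$ out of $V$. To avoid this, note that a cuspidal representation has unitary central character up to such a twist, so I will simply take the unitary normalisation from the start; Shin's theorem already produces unitary cuspidal representations.

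Second, run the global argument of Section \ref{irmglobal}. The finite places split into three kinds.

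\begin{enumerate}
\item At unramified places, choose the data of Proposition \ref{unramvector} and normalise $\psi_v$ so that both $\varepsilon$-factors are $1$.
\item At supercuspidal places $v\ne v_0$, the local functional equation is available (Theorem \ref{padiclocfneqnthm}, or the earlier known local results), and Proposition \ref{scepjsequalsepshprop} gives $\varepsilon_{JS}=\varepsilon_{Sh}$.
\item At the archimedean places, \eqref{archallepequaleqn} gives the same equality.
\end{enumerate}

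Since \eqref{allthreelfnequal} identifies all the $L$-factors, dividing \eqref{jslfnaleqn} by \eqref{lshfnaleqntwo} cancels every factor except those at $v_0$. This gives
\[
\varepsilon_{JS}(s,\pi,\wedge^2,\psi_{v_0})=\varepsilon_{Sh}(s,\pi,\wedge^2,\psi_{v_0}).
\]
One subtlety remains: the global identity involves $J$ at $v_0$ only through the local functional equation at $v_0$. If the goal is to avoid assuming Theorem \ref{padiclocfneqnthm} at $v_0$, the argument should be run differently. Proposition \ref{archnonvanish} lets us pick data at $S_\infty$ making $J_{S_\infty}$ nonzero at a given $s$. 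Dividing the global identity by \eqref{lshfnaleqntwo} then shows that the ratio $J(1-s,\ldots)/J(s,\ldots)$ at $v_0$, corrected by $L$-factors, equals $\varepsilon_{Sh}(s,\pi,\wedge^2,\psi)$ for all data. This simultaneously proves the local functional equation at $\pi$ and identifies $\epjsepif$.

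Finally, because $V$ is arbitrary, such $\pi$ exist in every open subset, which is exactly density.

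The main obstacle is the bookkeeping at $v_0$ and at the archimedean places. One must make sure that $J_{S_\infty}$ (the product of the archimedean integrals) and the local $v_0$ integral can be chosen not to vanish identically, so that the division is legitimate as an identity of meromorphic functions. One must also check that the normalisation of $\psi$ (global character trivial on $k$, and unramified locally at almost all places) is compatible with all the unramified $\varepsilon$-factors being $1$ and with the independence of the global $\varepsilon$-factor from $\psi$. Both points are handled by Proposition \ref{archnonvanish} and the standard choice of $\psi$ from \cite{KeRa2012}.
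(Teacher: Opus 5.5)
Your proposal is correct and follows the paper's route: globalise with Theorem \ref{shinglobthmtwo}, then compare \eqref{jslfnaleqn} with \eqref{lshfnaleqntwo}, cancelling the factors at the archimedean, unramified and supercuspidal places so that only $v_0$ survives. Your extra step at $v_0$ is also sound: you choose data at the other places of $S_\infty\cup S_r$ (the supercuspidal ones included, not only $S_\infty$) via Proposition \ref{archnonvanish} so that their integrals do not vanish identically, which makes explicit the local functional equation at $v_0$ that the paper's proof uses tacitly and later relies on in Proposition \ref{feforntemp}.
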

\begin{proof} 
We retain the notation of Theorem \ref{shinglobthmtwo}. Choose a number field $k$ and a place $v_0$ of $k$ 
such that $k_{{v_0}}\cong F$.
Fix a neighbourhood $V$ of $\operatorname{Temp}(\gln(k_{v_0}))$,
and choose finite places $v_i\mid p_i$, and such that $p_i$ splits in $k$, $i=1,2,3$. This means that
$k_{v_i}\cong \Q_{p_i}$. Choose $\Pi=\otimes_v^{\prime}\Pi_v$ as in Theorem \ref{shinglobthmtwo}, so $\Pi_{\infty}\in V$. 
Note that the set $S=\{v_i\}_{i=1}^3$ coincides with the set $S_r$ of places where $\Pi_v$ is ramified. As before,
we compare the global functional equations \eqref{jslfnaleqn} and \eqref{lshfnaleqntwo}. Since the 
$L$- and $\varepsilon$-factors coincide at the infinite places (by Theorem \ref{archepequalthm})
, unramified places and supercuspidal places (by Theorem \ref{scepjsequalsepsheqn}), we see
that
\[
\varepsilon_{JS}(s,\Pi_{v_0},\wedge^2,\psi)=\varepsilon_{Sh}(s,\Pi_{v_0},\wedge^2,\psi).
\]
This proves our proposition.
\end{proof}

Our strategy going forward is identical to the one employed for the Asai $\varepsilon$-factor in \cite{BP2021}, and
indeed, we will follow the notation of that paper very closely. It is also, in part, the approach used to complete the archimedean theory in \cite{JLST2025}. We start by giving another the proof of the local functional equation for nearly tempered representations when $\re{s}=1/2$.
\begin{proposition}\label{feforntemp} Let $F$ be a $p$-adic field and let $\pi\in \ntemprepfr$. If $r=2n$, for every 
$W\in \wpif$, $\phi\in \sfnf$, and $s\in \C$ with $\re{s}=1/2$,
\begin{equation}\label{evenntempfneq}
\widetilde{\Xi}(1-s,W, \phi)
= \varepsilon_{Sh}(s,\Pi,\wedge^2,\psi)\Xi(s,W,\phi),
\end{equation}
while if $r=2n-1$, for every $W\in \wpif$, 
\begin{equation}\label{oddntempfneq}
\widetilde{\Xi}(1-s,W_{1})
=\varepsilon_{Sh}(s,\Pi,\wedge^2,\psi) \Xi(s,W_{2}).
\end{equation}
\end{proposition}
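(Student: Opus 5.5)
The proof will follow the density-plus-analyticity argument used for the Asai factors in \cite{BP2021}. Fix a standard parabolic $P=MU$ and discrete series data $\sigma$, so that the nearly tempered representations on this component are the $\pi_\lambda$ with $\lambda$ in the open set $U$ of Proposition \ref{ntlfnisholfinord}. Fix $s$ with $\re{s}=1/2$ and $\lambda_0\in U$, together with $W_0\in\wpiflambdao$ and $\phi\in\sfnf$. By Proposition \ref{whittanalytic} I would choose an analytic section $\lambda\mapsto W_\lambda$ with $W_{\lambda_0}=W_0$. Since $\widetilde W$, $\rho(w_{n,n})\widetilde W$, and the finite sums of translates $W_1,W_2$ are obtained from $W$ by continuous linear operations, the sections $\lambda\mapsto\rho(w_{n,n})\widetilde{W_\lambda}$, $W_{\lambda,1}$ and $W_{\lambda,2}$ are again analytic with values in a suitable $\hcsfmu$.

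Step 1 is to show that both sides of \eqref{evenntempfneq} and \eqref{oddntempfneq} are holomorphic functions of $\lambda\in U$. Choose $\mu$ with $|\re{\lambda}|\prec\mu$ on a neighbourhood of $\lambda_0$ and $-2\min(\mu)<1/2$; this is possible because $|\re{\lambda_i}|<1/4$. Proposition \ref{jsintlambdahol} and Corollary \ref{ntjsintisholandbdd} then give absolute convergence at $s$ and at $1-s$, since both have real part $1/2$. They also give continuity of $W\mapsto J(s,W,\phi)$ on $\hcsfmu$, so composing with the analytic section makes $\lambda\mapsto J(s,W_\lambda,\phi)$ holomorphic. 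The same holds for $J(1-s,\rho(w_{n,n})\widetilde{W_\lambda},\widehat\phi)$ and for the odd case. By Proposition \ref{ntlfnisholfinord}(2), $L(s,\pi_\lambda,\wedge^2)^{-1}$ and $L(1-s,\tilde\pi_\lambda,\wedge^2)^{-1}$ are entire in $\lambda$. Hence $\Xi$ and $\widetilde\Xi$ are holomorphic in $\lambda$. By Proposition \ref{ntlfnisholfinord}(1), $\lambda\mapsto\varepsilon_{Sh}(s,\pi_\lambda,\wedge^2,\psi)$ is holomorphic on $U$.

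Step 2 establishes the identity on a dense set of tempered parameters. Let $\pi=\Pi_{v_0}$ be one of the representations produced by Theorem \ref{shinglobthmtwo}, as in Proposition \ref{denseeppadic}. The global functional equation (Theorem \ref{jsfnaleeqnthm}) holds for the corresponding global representation. At the unramified places I use Proposition \ref{unramvector} and $\varepsilon=1$. At the supercuspidal places the local functional equation comes from \cite{KeRa2012}, since these representations are globalisable, and the Jacquet--Shalika and Langlands--Shahidi $\varepsilon$-factors agree there by Proposition \ref{scepjsequalsepshprop}. At the archimedean places I use \cite{JLST2025}. Dividing the global functional equation by \eqref{lshfnaleqntwo} and varying only the data at $v_0$, the remaining local factors cancel. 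What is left is exactly the identity $\widetilde\Xi(1-s,W,\phi)=\varepsilon_{Sh}(s,\pi,\wedge^2,\psi)\Xi(s,W,\phi)$ for all $W\in\wpif$ and $\phi$, and similarly in the odd case. This gives the identity for $\lambda$ in a dense subset of $i\R^k$ near each point, in the Fell topology on this component.

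Step 3 extends by continuity and analyticity. The difference of the two sides is holomorphic in $\lambda\in U$ and vanishes on a dense subset of the real slice $i\R^k\cap U$. By continuity it vanishes on all of $i\R^k\cap U$, which is a totally real submanifold of maximal dimension, so it vanishes identically on the connected set $U$. Evaluating at $\lambda_0$ gives the statement for $W_0$.

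The main obstacle is Step 2: I must make sure that the local functional equation at the auxiliary supercuspidal places is available without circularity, and that the archimedean data can be chosen so that the archimedean integrals at $s$ are nonzero, so the division is legitimate. For nonvanishing I would invoke Proposition \ref{archnonvanish}. For the supercuspidal places I would rely on \cite{KeRa2012}, which covers globalisable representations. A secondary point is checking that the intertwining of the ``dense in $\operatorname{Temp}$'' conclusion with the parametrisation $\lambda\mapsto\pi_\lambda$ gives density in $i\R^k$ and not merely in its quotient; this follows because the quotient map is finite and open.
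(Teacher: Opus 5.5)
Your proposal is correct and follows essentially the same route as the paper. Globalisable tempered representations from Theorem \ref{shinglobthmtwo} give density. The analytic Whittaker sections of Proposition \ref{whittanalytic}, combined with Propositions \ref{jsintlambdahol} and \ref{ntlfnisholfinord}, give holomorphy in $\lambda$ of both sides. Continuity on the imaginary slice and the identity theorem then reach $\lambda_0$. The paper secures the needed connected domain by taking $\mu=|\re{\lambda_0}|+\epsilon\rho$, so that $\mathcal{U}[\prec\mu]$ contains both $\lambda_0$ and the whole imaginary slice; this is the precise form of your ``holomorphic on $U$'' claim.

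Your Step 2 is more explicit than the paper, which only cites Proposition \ref{denseeppadic}. You derive the functional equation at $v_0$ from the global one, using \cite{KeRa2012} at the auxiliary supercuspidal places and Proposition \ref{archnonvanish} to justify cancelling the other local factors, and this is exactly what that citation implicitly requires.
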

\begin{proof}
As usual, we write $\pi=\Ind_{P}^{G_r}(\sigma_{{\lambda}_0})$, where $P=MU$ is a standard parabolic
subgroup of $G_r$, $\sigma\in \dsrepfh(M)$ and $\lambda_0=(\lambda_{0,1},\ldots ,\lambda_{0,k})\in 
{\mathcal{A}}_{M,\C}^{*}=\C^k$ such that $\re{|\lambda_{0,i}|}< 1/4$, $1\le i\le k$. We can assume (after twisting
by a central character if necessary) without loss of generality that 
$\lambda_0\in ({\mathcal{A}}_{M,\C}^{G_r})^{*}$. We set $\pi_{\lambda}=\Ind_P^{G_r}(\sigma_{\lambda})$ for 
all $\lambda\in {\mathcal{A}}_{M,\C}^{*}$. Let $\rho$ be half the sum of the positive roots of $T_r$ in $N_r$.
Because $\pi$ is nearly tempered, we can choose $\epsilon>0$ such that
$\min(|\re{\lambda_0}|+\epsilon\rho)>-1/4$. Define $\mu=|\re{\lambda_0}|+\epsilon\rho$ and consider the open set ${\mathcal{U}}[\prec\mu]$. 
Let $W_{0}\in \wpif$ and $\phi_{0}\in \sfnf$. Proposition \ref{whittanalytic} tells us that there is an analytic map
$\lambda\mapsto W_{\lambda}$ for every $\lambda\in {\mathcal{U}}[\prec\mu]$.

Let $r=2n$. By Proposition \ref{denseeppadic}, we know that there is a dense subset $D$ of 
$\operatorname{Temp}(G_r)$ such that equation \eqref{evenntempfneq} holds. If the functional equation 
holds for a given $\pi$, it obviously holds for $\pi$ twisted by an unramified central character. It follows that 
there is a dense subset $D_{\sigma}$ of $i({\mathcal{A}}_{M}^G)^{*}$ such that \eqref{evenntempfneq} holds
for all $\lambda\in D_{\sigma}$ and all $s\in \C$ with $\re{s}=1/2$. Further, by Proposition \ref{jsintlambdahol} and
Corollary \ref{ntjsintisholandbdd}, we know that for $\re{s}=1/2$ the maps $\lambda\to J(s,W_{\lambda},\phi_0),J(1-s,\rho_{n,n}\tilde{W}_{\lambda},\widehat{\phi_0})$ are holomorphic functions for $\lambda\in {\mathcal{U}}[\prec\mu]$, while
Proposition \ref{ntlfnisholfinord} guarantees that $\lambda\mapsto \gamma_{Sh}(s,\pi_{\lambda},\wedge^2,\psi)$
is also a holomorphic map in the same domain. By continuity established in \ref{jsintlambdahol}, it follows 
that \eqref{evenntempfneq} holds for all 
$\lambda\in i({\mathcal{A}}_{M}^G)^{*}$. By the holomorphy in $\lambda$, \eqref{evenntempfneq} holds for all 
$\lambda\in {\mathcal{U}}[\prec\mu]$. Thus, \eqref{evenntempfneq} must hold for $\lambda=\lambda_0$,
proving the proposition when $r=2n$.

The proof when $r=2n-1$ is identical. 
\end{proof}

Since we have established the local functional equation in the nearly tempered case, we can establish
Theorem \ref{xientirethm} in this case almost immediately. There is no difference between the odd and 
even cases, so we give the argument for $r=2n$.
By Corollary \ref{ntjsintisholandbdd},
we know that if $\pi$ is nearly tempered, 
$\epshepif\Xi(s,W,\phi)$ is holomorphic in $\re{s}>1/2-\epsilon$ for some 
$\epsilon$, while $\tilde{\Xi}(1-s,\tilde{W},\hat{\phi})$ is holomorphic
in $\re{s}<1/2+\epsilon$. Equation \eqref{evenntempfneq} shows that $\Xi(s,W,\phi)$ is entire since
$\epjsepif$ is non-vanishing on all of $\C$. Moreover, by Propositions \ref{ntlfnisholfinord}
and \ref{jsintlambdahol}, we know that $\Xi(s,W,\phi)$ is of finite order in vertical strips. Further, for each $s_0\in \C$, 
Proposition \ref{archnonvanish} guarantees the existence of a $W\in \wpif$ and a $\phi\in \sfnf$ such that $J(s_0,W,\phi)\ne 0$ (this is true for any generic representation $\pi$, not just for the nearly tempered ones). We summarise our discussion as
\begin{proposition}\label{xiholforntemp} If $\pi\in \ntemprepfr$, $\Xi(s,W,\phi)$ is entire and of finite order in vertical strips for all
$W\in \wpif$ and all $\phi\in \sfnf$. Further, for each $s_0\in \C$, there exists $W\in \wpif$ such that $\Xi(s_0,W,\phi)\ne 0$.
\end{proposition}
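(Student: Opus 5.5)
The plan is to read off both assertions of Proposition \ref{xiholforntemp} from the local functional equation of Proposition \ref{feforntemp}, used as a gluing device between two half-planes. I treat $r=2n$; the odd case is identical with $W_1,W_2$ in place of $(W,\phi)$. Write $\pi=\pi_{\lambda_0}$ nearly tempered.

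The first step is holomorphy of $\Xi(s,W,\phi)$ on a right half-plane. By Corollary \ref{ntjsintisholandbdd}, $J(s,W,\phi)$ is holomorphic and bounded in vertical strips in $\uh_{1/2-\epsilon}$. By Proposition \ref{ntlfnisholfinord}(2), $L(s,\pi,\wedge^2)^{-1}$ is entire and of finite order in vertical strips. Hence $\Xi(s,W,\phi)$ is holomorphic and of finite order in vertical strips on $\uh_{1/2-\epsilon}$.

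The second step handles the left half-plane. The contragredient $\tilde{\pi}$ is again nearly tempered, and $\rho(w_{n,n})\widetilde{W}\in\wpidualf$ and $\widehat{\phi}\in\sfnf$. So the same argument shows that $\widetilde{\Xi}(s',W,\phi)$ is holomorphic and of finite order on $\re{s'}>1/2-\epsilon$. Substituting $s'=1-s$, the function $\widetilde{\Xi}(1-s,W,\phi)$ is holomorphic and of finite order on $\re{s}<1/2+\epsilon$.

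The third step glues the two pieces. On the line $\re{s}=1/2$, Proposition \ref{feforntemp} gives
\[
\widetilde{\Xi}(1-s,W,\phi)=\epshepif\,\Xi(s,W,\phi).
\]
By Proposition \ref{ntlfnisholfinord}(1) and the explicit form $w_{Sh}(\pi)q^{-\kappa_{Sh}(\pi)s}$, the factor $\epshepif$ is entire, nowhere vanishing and of finite order. Both sides are holomorphic on the strip $1/2-\epsilon<\re{s}<1/2+\epsilon$ and agree on the line $\re{s}=1/2$. By the identity theorem they agree on the whole strip. Therefore $\Xi(s,W,\phi)$ equals $\epshepif^{-1}\widetilde{\Xi}(1-s,W,\phi)$ on the strip, and this expression is holomorphic on $\re{s}<1/2+\epsilon$. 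It thus provides the analytic continuation of $\Xi(s,W,\phi)$ to the left. Since $\Xi$ is already holomorphic on the right half-plane, it is entire.

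The finite-order bound on any vertical strip follows by covering the strip with the two half-planes, using the bounds from the first two steps and the finite order of $\epshepif^{\pm1}$.

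The last step is nonvanishing. Fix $s_0\in\C$. By Proposition \ref{archnonvanish} there exist $W$ and $\phi$ with $J(s,W,\phi)$ holomorphic at $s_0$ and $J(s_0,W,\phi)\neq0$. Since $L(s,\pi,\wedge^2)^{-1}$ is a polynomial in $q^{-s}$, the only way $\Xi(s_0,W,\phi)$ can vanish is if $L(s_0,\pi,\wedge^2)^{-1}=0$, i.e.\ if $s_0$ is a pole of $L(s,\pi,\wedge^2)$.

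This pole case is the main obstacle, because there Proposition \ref{archnonvanish} gives no information. Here I would use that $L(s,\pi,\wedge^2)$ generates the fractional ideal ${\mathcal I}$ of $\C[q^s,q^{-s}]$ spanned by the integrals $J(s,W,\phi)$, by Theorem \ref{lepiequalthm} and the definition of $L_{JS}$. So there is a finite combination $\sum_i c_i(q^{\pm s})J(s,W_i,\phi_i)=L(s,\pi,\wedge^2)$, which gives
\[
\sum_i c_i(q^{\pm s})\,\Xi(s,W_i,\phi_i)=1 .
\]
Evaluating at $s_0$ shows that some $\Xi(s_0,W_i,\phi_i)\neq0$. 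This handles all $s_0$ at once and also covers the non-pole case.
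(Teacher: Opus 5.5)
Your proposal is correct. For entirety and finite order it is the paper's argument: holomorphy of $\Xi$ on $\uh_{1/2-\epsilon}$, holomorphy of $\widetilde{\Xi}(1-s,W,\phi)$ on $\re{s}<1/2+\epsilon$, and gluing along $\re{s}=1/2$ by Proposition~\ref{feforntemp}. You simply make the identity-theorem step explicit.

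The nonvanishing part is where you genuinely differ, and your version is the more careful one. The paper only cites Proposition~\ref{archnonvanish}, which gives $J(s_0,W,\phi)$ holomorphic and nonzero. As you point out, this yields $\Xi(s_0,W,\phi)\neq0$ only when $L(s_0,\pi,\wedge^2)^{-1}\neq0$; at a pole of $L(s,\pi,\wedge^2)$ it actually forces $\Xi(s_0,W,\phi)=0$. Your generator argument, $\sum_i c_i\,\Xi(s,W_i,\phi_i)=1$ with $c_i\in\C[q^{s},q^{-s}]$, settles every $s_0$ at once and is rigorous.

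It does have a cost: it makes the first half of the proposition redundant. Once $L(s,\pi,\wedge^2)=\ljsepif$ generates $\mathcal{I}$, every $\Xi(s,W,\phi)$ is already a Laurent polynomial in $q^{-s}$, hence entire and bounded on vertical strips. The analytic gluing is then unnecessary; its purpose in the paper is to avoid this ideal-theoretic input and to mirror the archimedean argument.

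If you prefer to stay within that analytic framework, the paper's one-line argument can be completed differently.
For $\re{s_0}>1/2-\epsilon$, Proposition~\ref{ntlfnisholfinord}(2) gives $L(s_0,\pi,\wedge^2)^{-1}\neq0$, so Proposition~\ref{archnonvanish} suffices.
For $\re{s_0}<1/2+\epsilon$, use the extended functional equation to write $\Xi(s_0,W,\phi)=\varepsilon_{Sh}(s_0,\pi,\wedge^2,\psi)^{-1}\,\widetilde{\Xi}(1-s_0,W,\phi)$. Since $\tilde{\pi}$ is nearly tempered, $L(1-s_0,\tilde{\pi},\wedge^2)^{-1}\neq0$. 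Now apply Proposition~\ref{archnonvanish} to $\tilde{\pi}$ at $1-s_0$, using that $(W,\phi)\mapsto(\rho(w_{n,n})\widetilde{W},\widehat{\phi})$ maps $\wpif\times\sfnf$ bijectively onto $\wpidualf\times\sfnf$.
This variant never uses the ideal structure, so it would also make sense in the archimedean setting.
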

\begin{remark} Obviously the proposition above also holds for $\tilde{\Xi}(1-s,W,\phi)$.
\end{remark}

We can now complete our proof of Theorem \ref{padiclocfneqnthm}. Again, the arguments are similar
to those employed for archimedean representations in \cite{JLST2025}, and in Section 3.10 of 
\cite{BP2021} for the Asai $L$-function. 
\begin{proof}[Proofs of Theorem \ref{xientirethm} and Theorem  \ref{padiclocfneqnthm}]
Let $\pi\in \grepfr$. As usual write $\pi_{\lambda_0}=\Ind_P^{G_r}\sigma_{\lambda_0}$ for a suitable standard parabolic $P=MU$,
$\sigma\in \dsrepfh(M)$ and $\lambda_0\in {\mathcal{A}}_{M,\C}^{*}$. As before, after twisting by an unramified central character
we can assume that $\lambda_0\in ({\mathcal{A}}_{M,\C}^{G_r})^{*}$. Let $W\in \wpif$.
Recall that for $\mu\in ({\mathcal{A}}_{M}^{G_r})^{*}$, we can choose a section $\lambda\mapsto W_{\lambda}$ which is 
analytic in ${\mathcal{U}}[\mu]$ and such that $W_{\lambda_0}=W$.

As usual we will focus on the case $r=2n$, the case $r=2n-1$ being similar. Consider the functions
\begin{flalign*}
Z_{+}(s,\lambda)&=\varepsilon_{Sh}(s+1/2,\pi_{\lambda},\psi)\Xi(s+1/2,W_{\lambda},\phi),\,\,\text{and}\\
Z_{-}(s,\lambda)&=\varepsilon_{Sh}(s+1/2,\pi_{\lambda},\psi)\tilde{\Xi}(s+1/2,W_{\lambda},\phi).
\end{flalign*}
Let $U=\{\lambda\in ({\mathcal{A}}_{M,\C}^{G_r})^{*}\,\vert\,|\re{\lambda_i}|<1/4, 1\le i\le k\}$, so $\pi_{\lambda}\in \ntemprepfr$ 
for every $\lambda\in U$. The functions $Z_{+}$ and $Z_{-}$ are holomorphic on $\uh_{\ge 1/2}\times U$ and, by Proposition 
\ref{xiholforntemp} admit holomorphic extensions to $\C\times U$ which are of finite order in vertical strips in the first variable locally 
uniformly in the second variable. Further, since \eqref{evenntempfneq} has been proved for $\pi\in \ntemprepfr$, we see that $Z_{+}(s,\lambda)=Z_{-}(-s,\lambda)$ for all $s\in \C$ and $\lambda\in U$. If $U_1$ is a relatively compact connected open subset of 
$({\mathcal{A}}_{M,\C}^{G_r})^{*}$, we may realise it as an open subset of ${\mathcal{U}}[\mu]$ for a suitable $\mu$.
Then $Z_{+}$ and $Z_{-}$ admit holomorphic extensions to $\uh_{>C}\times U_1$ for some $C$, and are of finite order in vertical 
strips in the first variable locally uniformly in the second variable. Now Proposition 2.8.1 of \cite{BP2021} asserts that both functions
extend to holomorphic functions on $\C\times ({\mathcal{A}}_{M,\C}^{G_r})^{*}$ which are of finite order in vertical 
strips in the first variable locally uniformly in the second variable and continue to satisfy the local functional functional equation on the
larger domain. In particular, this holds for $\lambda_0$. This proves Theorem \ref{xientirethm}.

Since we now have the local functional equation for all tempered representations and 
\[
\varepsilon_{JS}(s,\Pi_{v_0},\wedge^2,\psi_{v_0})=\varepsilon_{Sh}(s,\Pi_{v_0},\wedge^2,\psi_{v_0}).
\]
holds on a dense subset, the equality must hold everywhere in $\temprepfr$ by  continuity (Proposition \ref{jsintlambdahol}). By the holomorphy in $\lambda$,
the equality must hold for all generic representations. This proves Theorem \ref{epjsequalepshthm}.

Further, since $\epshepif$ is known to have the form $A(\pi)B(\pi)^s$,  so must $\epjsepif$, which completes the proof of Theorem \ref{padiclocfneqnthm}.
\end{proof}

\begin{remark} Theorem 3.8.1 of \cite{BP2021} is a fairly precise globalisation result. Unfortunately, the 
globalisation produced by this theorem loses track of local data at one finite place -- in particular, we do not 
know if the representation is supercuspidal at this place, so it is inadequate for our purposes. 
\end{remark}

\bibliographystyle{alpha}
\bibliography{../../../../Bibtex/master2026}

\end{document}